\documentclass[hidelinks,11 pt]{article} 
\usepackage[top=22mm, bottom=22mm, left=22mm, right=22mm]{geometry}
\usepackage{rotating}
\usepackage{amsmath,amsfonts,amssymb,amsthm,enumerate,hyperref,multicol, graphicx, array, caption, subcaption, color,makecell}
\hypersetup{colorlinks,citecolor=blue, linkcolor=blue}
\usepackage{adjustbox}
\usepackage{booktabs,siunitx}
\usepackage[dvipsnames]{xcolor}
\usepackage{multirow}
\usepackage{mathtools}
\usepackage{bigstrut}
\usepackage{float}
\usepackage{tabularx}
\usepackage{pgfplots}
\allowdisplaybreaks
\usepackage{titlesec}
\usepackage{enumitem}
\usepackage{authblk}

\usepackage{lipsum}
\usepackage{tikz}
\usetikzlibrary{shapes, arrows.meta, positioning}
\definecolor{orcidlogocol}{HTML}{A6CE39}
\newtheorem{theorem}{Theorem}[section]
\newtheorem{definition}{Definition}[section]

\newtheorem{lemma}{Lemma}[section]
\newtheorem{rmrk}{Remark}[section]

\numberwithin{equation}{section}
\usepackage{orcidlink}
\begin{document}
	{	{ \LARGE{\title{{\bf A modified double inertial subgradient extragradient algorithm for non-monotone variational inequality with applications}}}}
		\author[1]{Watanjeet Singh$^{\orcidlink{0000-0002-6456-357X}}$\thanks{Corresponding author: watanjeetsingh@gmail.com }}
		\author[1]{Sumit Chandok$^{\orcidlink{0000-0003-1928-2952}}$ }
		\affil[1]{\small{Department of Mathematics, Thapar Institute of Engineering and Technology, \protect\\ Patiala, 147001, Punjab, India\protect\\
				{watanjeetsingh@gmail.com}, {sumit.chandok@thapar.edu}}}
		\date{}
		\maketitle
		\begin{abstract}
			This paper presents a modified iterative approach to solve the variational inequality problem using the double inertial technique in the context of a real Hilbert space. Our iterative technique involves a projection onto a generalized half-space and a self-adaptive step-size rule which works without prior knowledge of the Lipschitz constant of the operator. We establish a weak convergence result for a variational inequality involving a non-monotone cost operator along with weak and strong convergence results for quasi-monotone and strongly pseudo-monotone operators, respectively. Under a simplified framework, linear convergence of the proposed method is also discussed. Additionally, we provide some numerical experiments to demonstrate the effectiveness of our iterative algorithm compared to previously established algorithms in solving real-world applications. Finally, we carry out a sensitivity analysis of our algorithm to demonstrate its effectiveness across various parameter settings.
		\end{abstract}
		\noindent
		{\bf Keywords:} Variational inequality, Non-monotone mapping, Weak and strong convergence.   \\
		\noindent
		{\bf 2020 Mathematics Subject Classification:} 47J20. 65K15. 47J25. 
		47H05. 
		\section{Introduction}
		Consider $C$ to be a nonempty, closed and convex subset of a real Hilbert space $H$. Then the classical variational inequality for an operator $\mathcal{F}: H \to H$ is of the form: find $x  \in C$ such that 
		\begin{align} \label{V1}
			\langle \mathcal{F}x,y-x\rangle \geq 0, ~\text{for all}~ y  \in C.
		\end{align}
		The solution set of \ref{V1} is denoted by $VI(C,\mathcal{F})$. This variational inequality problem was independently introduced by Fichera \cite{1} and Stampaccia \cite{2} to address the problems arising from mechanics. Let $S_{\mathcal{F}}$ be the solution set of the dual variational inequality problem, i.e. $S_{\mathcal{F}}=\{x \in C| \langle \mathcal{F}y,y-x\rangle \geq 0,~\text{for all}~y \in C\}$. This $S_{\mathcal{F}}$ is closed and convex subset of $C$, furthermore, since $C$ is convex and $\mathcal{F}$ is continuous, we have $S_{\mathcal{F}} \subset VI(C,\mathcal{F})$. Also, if $\mathcal{F}$ is pseudo-monotone and continuous, we have $S_{\mathcal{F}}=VI(C,\mathcal{F})$.

		The variational inequality problem, in short, VIP, has earned considerable attention in optimization and nonlinear analysis because the theory provides a unified and natural treatment for various mathematical problems. Some of these mathematical problems include network equilibrium problems, optimal control problems, oligopolistic market equilibrium problems, and image restoration problems.
		\par
		The earliest and simplest algorithm to tackle VIP is the projected gradient method. However, the implementation of this method is limited as it requires $\mathcal{F}$ to be strongly monotone or inverse strongly monotone. In 1976, Korpelevich \cite{3} introduced the extragradient method (EGM) to avoid the strict requirement of monotonicity in the setting of finite-dimensional Euclidean spaces, assuming the cost operator to be both monotone and $L$-Lipschitz continuous. Nonetheless, employing the EGM necessitates the calculation of two projections on the set $C$ in each iteration, which may pose challenges in computing projections during numerical experiments.
		\par
		To overcome this, He \cite{4} and Sun \cite{5} proposed the projection and contraction method (PCM). The PCM has received a great deal of interest because of its computational efficiency. Another valuable modification over EGM is the subgradient extragradient method (SEGM) proposed by Censor et al. \cite{6}. The SEGM operates by replacing the second projection onto set $C$ with a projection onto a particular constructible half-space. This adjustment enhances the method's efficiency as the projection onto half-space is explicitly defined. 
		\par
		In 2019, Dong et al. \cite{21} combined SEGM and PCM using two different step-sizes in each iteration. It has been demonstrated that the iterative scheme generated by merging these methods offers a great computational advantage. 
		Subsequently, numerous iterative algorithms have been developed by combining the advantages of EGM (or SEGM) and PCM (see \cite{24,23,22} and references cited therein).
		\par
		Polyak \cite{16} studied the convergence of an inertial extrapolation algorithm and demonstrated that it enhances the algorithm's convergence speed. Over the years, the inertial technique has attracted enormous interest among researchers as one of the important tools to accelerate the convergence of the algorithms. Numerical iterative methods incorporating multi-step inertial extrapolation can significantly accelerate convergence in solving optimization problems.  Recently, Yao et al. \cite{19} proposed an iterative scheme with double inertial steps to solve pseudo-monotone variational inequality, showing that it significantly improves the single inertial step, as one of the inertial parameters is allowed to be 1. Wang et al.  \cite{20} proposed a double inertial projection method for a quasi-monotone variational inequality.
        Recently, Li et al. \cite{li} proposed a double inertial subgradient extragradient method combined with projection-contraction method to solve quasi-monotone variational inequality. For more literature on the double inertial technique, we refer to \cite{Huang,wat,18,dv}, and the references therein.
        \par
      When the Lipschitz constant of the cost operator is unknown, classical methods may no longer be applicable. To address this issue, self-adaptive step-size rule and the Armijo line search rule have been adopted, allowing algorithms to operate without prior knowledge of the operator’s Lipschitz constant. Recently, Long et al.~\cite{long} proposed a single inertial projection algorithm incorporating a line search strategy, where uniform continuity was assumed instead of $L$-Lipschitz continuity. Tan et al.~\cite{tan} introduced a Mann-type subgradient extragradient method for pseudo-monotone variational inequalities, relying on a weaker form of Lipschitz continuity. However, Armijo-type line search schemes often require multiple projections onto the feasible set at each iteration, which increases the computational burden. To mitigate this drawback, Yang and Liu~\cite{yang} proposed a novel self adaptive step-size rule. Although their approach assumes $L$-Lipschitz continuity, it does not require explicit knowledge of the Lipschitz constant.
      \par
      In recent years, considerable attention has been given to variational inequality problems without monotonicity assumptions on the cost operator. In particular, Liu and Yang~\cite{liu} established a weak convergence result for non-monotone variational inequalities in 2019. Subsequently, Thong et al.~\cite{th11} proposed a relaxed two-step Tseng-type extragradient method for non-monotone variational inequality problems by incorporating an inertial technique. For further studies on non-monotone variational inequality problems, we refer the reader to~\cite{Newww, zzhou}.

		\section{Motivation}
		Variational inequalities in Hilbert spaces offer a powerful mathematical framework with broad applications to network flow, economics, engineering, and optimization. In economic contexts like the Nash-Cournot oligopoly model, we use variational inequalities for analyzing and optimizing competitive behaviors, enhancing the efficiency of oligopoly markets and ensuring better resource allocation and operational strategies. In the Nash-Cournot oligopolistic market equilibrium model, Murphy et al. \cite{13} examined how multiple firms can reach a stable state, or Nash equilibrium, in an oligopolistic market, where a small number of firms dominate the industry and each acts independently while supplying the same product. In this context, a Nash equilibrium means no firm can increase its profit by changing its production level while others keep theirs unchanged.  Murphy et al. \cite{13} provided a method to find this equilibrium by accounting for each firm’s costs and the overall market demand. The analysis focuses on determining how much each firm should produce, considering their production costs and the quantity they can sell, based on the total market supply. It was shown that under certain conditions, like predictable production costs and a clear relationship between price and demand, this equilibrium can be calculated. Harker \cite{14} framed a monotone variational inequality problem for this model. The detailed mathematical model of this problem is discussed in Section 5.
		
		\par
		In another scenario, suppose a traffic network comprising a finite set of nodes connected by directed edges. Consider
		$D$ is the set of all edges in the network and
		$W$ denotes the set of directed node pairs. Let $u=(i,j) \in W$, where $i$ and $j$ denote the origin and destination nodes, respectively. Then $A_{u}$ represents the set of all paths from node $i$ to node $j$. The set of all paths in the network is given by $T=\bigcup\limits_{u \in W} A_{u}$. For every path $t \in T$, let $x_{t}$ represents the number of vehicles on that path. Each $u \in W$ corresponds to a positive value $d_{u}$ indicating the flow demand from node $i$ to node $j$. The set of feasible flows, denoted by
		$C$ is defined as follows:
		\begin{align*}
			C=\{x: \sum \limits_{t \in A_{u}}x_{t}=d_{u},~\text{for all} ~u \in W; x_{t}\geq 0~ \text{for all}~ t \in T\}.
		\end{align*}
		Given the flow vector $x$, the traffic flow on each edge $d \in D$ is given by
		\begin{align*}
			h_{d}=\sum \limits_{u \in W}\sum \limits_{t \in A_{u}}
			\rho_{td}x_{t},
		\end{align*}
		where
		\begin{align*}
			\rho_{td}=\begin{array}{cc}
				\bigg{ \{ }& 
				\begin{array}{cc}
					1 & \text{if $d$ belongs to path $t$} \\
					0 & \text{otherwise}.
				\end{array}
			\end{array}
		\end{align*}
		The total cost for path $t$ is calculated as $\mathcal{F}_{t}(x)=\sum \limits_{d \in D}\rho_{td}k_{d}$, where $k_{d}$ is the expenses on $d \in D$. A feasible vector flow $x^{*} \in C$ with positive components is called the equilibrium if for all $u \in W$, we have
		\begin{align}\label{addin}
			\mathcal{F}_{q}(x^{*})=\min \limits_{t \in A_{u}}\mathcal{F}_{t}(x^{*}).
		\end{align}
		Solving \eqref{addin} is equivalent to solving the following classical variational inequality problem: Find $x^{*} \in C$ such that 
		\begin{align*}
			\langle \mathcal{F}x^{*},x-x^{*}\rangle \geq 0,
		\end{align*}
		for all $x \in C$. 
		\par
		
		Furthermore, in image restoration, variational inequalities provide robust techniques to recover original images from noisy versions, thus improving quality and accuracy. The idea is to convert the corresponding least square problem to a variational inequality problem. This framework leads to improved restoration outcomes and accurate representation of the original image.
		\par
		Motivated by the research going in this direction, we propose a new double inertial iterative algorithm by generalizing the SEGM combined with the PCM, along with a generalized non-monotonic step-size. The present work builds on and generalizes earlier studies in this area, resulting in a broader and more flexible framework for the treatment of non-monotone variational inequalities.
        Unlike other existing methods, our method involves a projection onto a generalized half-space, which extends many results in the literature. The key features of our paper are:
		\begin{enumerate}
			\item We examine two inertial extrapolation steps, where one inertia can be selected from $[0,1]$ and the other as close as possible to 1.
			\item A generalized non-monotonic step-size is considered, which works without prior knowledge of the Lipschitz constant.
			\item A projection onto a closed, convex set $C$ is calculated, followed by a projection onto the generalized half-space.
			\item We provide a weak convergence result for a non-monotone variational inequality problem.
            \item We also provide a comparison  of our double inertial iterative scheme with other well-known iterative schemes involving no inertial or single inertial parameter.
		\end{enumerate}
		Our paper is organized as follows. In Section 2, we recall some basic definitions and results that are required to understand the main results. In Section 3, under certain assumptions, we first establish a weak convergence result for a non-monotone variational inequality, and then present a weak convergence result for a variational inequality involving a quasi-monotone operator. Additionally, we give a strong convergence outcome for a variational inequality involving a strongly pseudo-monotone operator. We also discuss the linear convergence of the algorithm under a simplified framework. In Section 4, we demonstrate the efficiency of our algorithm in addressing real-life applications, including network equilibrium flow, market equilibrium models, and image restoration problems.

		\section{Preliminaries}
		In this section, we outline fundamental definitions and lemmas essential for understanding our main result. Throughout the paper, we denote $C$ as a nonempty, closed, and convex subset of the real Hilbert space $H$.  
		\begin{definition}
			An operator $\mathcal{F}: H \to H$ is said to be:
			\begin{enumerate}[label=(\roman*)]
				\item $L$-Lipschitz continuous if there exists a constant $L >0$ such that $\|\mathcal{F}x-\mathcal{F}y\| \leq L \|x-y\|$, for all $x,y \in H $.
				\item monotone if $\langle \mathcal{F}x-\mathcal{F}y,x-y\rangle \geq 0$, for all $x,y \in H $.
				\item pseudo-monotone if $\langle \mathcal{F}x,y-x \rangle \geq 0$ implies $\langle \mathcal{F}y,y-x\rangle \geq 0$, for all $x,y \in H $.
				\item quasi-monotone if $\langle \mathcal{F}x,y-x \rangle > 0$ implies $\langle \mathcal{F}y,y-x\rangle \geq 0$, for all $x,y \in H $.
				\item $k$-strongly pseudo-monotone if there exists $k >0$ such that $\langle \mathcal{F}x,y-x \rangle \geq 0$ implies $\langle \mathcal{F}y,y-x\rangle \geq k \|y-x\|^2$, for all $x,y \in H $.
			\end{enumerate}    
		\end{definition}

		For each point $x \in H $, there is a unique nearest point $P_{C}(x)$ in $C$, such that $$P_{C}(x)=\underset{y  \in C} {\arg \min} \{\|x - y\|\}.$$ 
		The mapping $P_{C}: H \to C$ is called a metric projection of $H$ onto $C$. It is known that the metric projection $P_{C}$ is nonexpansive. Some of the properties of metric projection are:
		\begin{align}
			\langle x-P_{C}(x),y-P_{C}(y)\rangle \leq 0; ~\text{for all}~x \in H , y \in C,
		\end{align}
		and 
		\begin{align}
			\|P_{C}(x)-P_{C}(y)\|^2 \leq \langle P_{C}(x)-P_{C}(y),x-y \rangle, ~\text{for all}~x,y \in H .
		\end{align}
		\begin{lemma} \label{lem2.1}
			The following results hold in $H$:
			\begin{enumerate} [label=(\roman*)]
				\item $\|x+y\|^2 =\|x\|^2+\|y\|^2+2 \langle x,y\rangle$, for all $x,y \in H$.
				\item $\|\alpha x +\beta y\|^2=\alpha (\alpha +\beta)\|x\|^2+\beta (\alpha +\beta)\|y\|^2-\alpha \beta \|x-y\|^2$, for all $x,y \in H $ and $\alpha,\beta \in \mathbb{R}$.
			\end{enumerate}
		\end{lemma}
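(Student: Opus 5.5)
Both identities follow by expanding squared norms through the inner product, using bilinearity and symmetry of $\langle\cdot,\cdot\rangle$ in the real Hilbert space $H$.

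For (i), I write $\|x+y\|^2=\langle x+y,x+y\rangle$ and expand by bilinearity to get $\langle x,x\rangle+\langle x,y\rangle+\langle y,x\rangle+\langle y,y\rangle$. Since $H$ is real, $\langle x,y\rangle=\langle y,x\rangle$. Using $\langle x,x\rangle=\|x\|^2$, this gives $\|x\|^2+\|y\|^2+2\langle x,y\rangle$.

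For (ii), I apply (i) twice. First, with $\alpha x$ and $\beta y$ in place of $x$ and $y$, the left-hand side becomes $\alpha^2\|x\|^2+\beta^2\|y\|^2+2\alpha\beta\langle x,y\rangle$. Second, on the right-hand side, (i) with $y$ replaced by $-y$ gives
\begin{align*}
\|x-y\|^2=\|x\|^2+\|y\|^2-2\langle x,y\rangle .
\end{align*}
Substituting this into the right-hand side yields
\begin{align*}
\alpha(\alpha+\beta)\|x\|^2+\beta(\alpha+\beta)\|y\|^2-\alpha\beta\|x\|^2-\alpha\beta\|y\|^2+2\alpha\beta\langle x,y\rangle .
\end{align*}
The $\alpha\beta\|x\|^2$ terms cancel, leaving $\alpha^2\|x\|^2$, and likewise the $\alpha\beta\|y\|^2$ terms cancel, leaving $\beta^2\|y\|^2$. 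The result is exactly the left-hand side computed above.

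There is no genuine obstacle here. The only point needing care is that (ii) holds for all real $\alpha,\beta$, with no restriction such as $\alpha+\beta=1$. The bookkeeping of the cross terms $\alpha\beta\|x\|^2$ and $\alpha\beta\|y\|^2$ is what makes the general form work, and it should be checked explicitly rather than quoting the convex-combination special case.
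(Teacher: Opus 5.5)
Your proof is correct. Both identities follow from bilinearity and symmetry of the real inner product, and your bookkeeping of the cross terms $\alpha\beta\|x\|^2$ and $\alpha\beta\|y\|^2$ in (ii) checks out. The paper states this lemma without proof as a standard fact, so your direct expansion is the expected verification. Your insistence on arbitrary real $\alpha,\beta$ is well placed, since the paper later applies (ii) with a negative coefficient, $\alpha=1+\nu_n$ and $\beta=-\nu_n$, when expanding $\|w_n-p^{*}\|^2$.
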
 
		\begin{lemma} \cite{Ye}
			If either
			\begin{enumerate} [label=(\roman*)]
				\item $\mathcal{F}$ is pseudo-monotone on $C$ and $VI(C,\mathcal{F})\neq \phi$,
				\item $\mathcal{F}$ is the gradient of $\mathcal{G}$, where $\mathcal{G}$ is a differential quasiconvex function on an open set $\mathfrak{K}$, $C \subset \mathfrak{K}$ and attains its global minimum on $C$,
				\item $\mathcal{F}$ is quasi-monotone on $C$, $\mathcal{F}\neq 0$ on $C$ and $C$ is bounded,
				\item $\mathcal{F}$ is quasi-monotone on $C$, $\mathcal{F} \neq 0$ on $C$ and there exists a positive number $t$, such that, for every $r \in C$ with $\|r\| \geq t$, there exists $w \in C$ such that $\|w\| \leq t$ and $\langle \mathcal{F}r,w-v \rangle \leq 0$,
				\item $\mathcal{F}$ is quasi-monotone on $C$, $intC$ is non-empty and there exists $v^{*} \in VI(C,\mathcal{F})$ such that $\mathcal{F}v^{*}\neq 0$.
			\end{enumerate}
			Then $S_{\mathcal{F}}$ is non-empty.
		\end{lemma}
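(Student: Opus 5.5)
Since the lemma gathers five independent sufficient conditions, I would prove each case separately. Each time the goal is to exhibit a point $x^{*}\in C$ with $\langle \mathcal{F}y,y-x^{*}\rangle\ge 0$ for all $y\in C$. Throughout I use the continuity of $\mathcal{F}$, which the paper assumes.

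\textbf{Cases (i) and (ii).} These are direct.

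For (i), take $x^{*}\in VI(C,\mathcal{F})$. For any $y\in C$ we have $\langle \mathcal{F}x^{*},y-x^{*}\rangle\ge 0$, and pseudo-monotonicity gives $\langle \mathcal{F}y,y-x^{*}\rangle\ge 0$. Hence $x^{*}\in S_{\mathcal{F}}$.

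For (ii), let $x^{*}\in C$ be a global minimiser of $\mathcal{G}$ on $C$, so $\mathcal{G}(x^{*})\le \mathcal{G}(y)$ for every $y\in C$. A differentiable function is quasiconvex iff $\mathcal{G}(x)\le\mathcal{G}(y)$ implies $\langle\nabla\mathcal{G}(y),x-y\rangle\le 0$. Applying this with $x=x^{*}$ gives $\langle \mathcal{F}y,y-x^{*}\rangle\ge 0$.

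\textbf{Case (iii).} This is the core of the lemma. For $y\in C$ set
\[
L(y)=\{x\in C:\langle \mathcal{F}y,y-x\rangle\ge 0\}.
\]
Each $L(y)$ is the intersection of $C$ with a closed half-space. So it is closed and convex, hence weakly closed, and it is weakly compact because $C$ is bounded. Since $S_{\mathcal{F}}=\bigcap_{y\in C}L(y)$, it suffices to prove the finite intersection property.

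Fix $y_1,\dots,y_m\in C$ and let $K=\mathrm{conv}\{y_1,\dots,y_m\}$. This is a compact convex subset of a finite-dimensional space. By Hartman--Stampacchia (Brouwer applied to $P_K(x-\mathcal{F}x)$) there is $\bar x\in VI(K,\mathcal{F})$.

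If $\langle \mathcal{F}\bar x,y_i-\bar x\rangle>0$, quasi-monotonicity gives $\bar x\in L(y_i)$ at once. The difficulty is the equality case $\langle \mathcal{F}\bar x,y_i-\bar x\rangle=0$. Here I would try to find $w\in K$ with $\langle \mathcal{F}\bar x,w-\bar x\rangle>0$. Then $y_t=(1-t)y_i+tw$ satisfies $\langle \mathcal{F}\bar x,y_t-\bar x\rangle>0$ for $t\in(0,1]$. Quasi-monotonicity gives $\langle \mathcal{F}y_t,y_t-\bar x\rangle\ge 0$, and letting $t\downarrow 0$ with continuity of $\mathcal{F}$ yields $\bar x\in L(y_i)$.

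Such a $w$ fails to exist only when $\mathcal{F}\bar x\perp (K-\bar x)$. In that situation I would enlarge $K$ by adjoining a point of $C$ (possible since $\mathcal{F}\bar x\neq 0$) and re-solve, or argue via the ordering of $\langle\mathcal{F}\bar x,\cdot\rangle$ on $C$, following Ye and He. This degenerate case is the step I expect to be the main obstacle.

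\textbf{Cases (iv) and (v).}

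For (iv), apply (iii) to $C_t=C\cap\overline{B}(0,t)$ to obtain $x^{*}\in S_{\mathcal{F}}$ relative to $C_t$. Then extend to $y\in C$ with $\|y\|>t$ using the coercivity-type condition. The given $w$ with $\|w\|\le t$ and $\langle \mathcal{F}y,w-y\rangle\le 0$ lets me write
\[
\langle \mathcal{F}y,y-x^{*}\rangle\ \ge\ \langle \mathcal{F}y,w-x^{*}\rangle,
\]
and I would combine this with segment arguments between $w$ and $y$ inside $C$.

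For (v), take $v^{*}\in VI(C,\mathcal{F})$ with $\mathcal{F}v^{*}\neq 0$. Since $\mathrm{int}\,C\neq\emptyset$, for every $y\in C$ there are points $z\in C$ arbitrarily close to $y$ with $\langle \mathcal{F}v^{*},z-v^{*}\rangle>0$: perturb $y$ slightly towards the direction $\mathcal{F}v^{*}$ using an interior point. Quasi-monotonicity gives $\langle \mathcal{F}z,z-v^{*}\rangle\ge 0$, and a limiting argument by continuity shows $v^{*}\in S_{\mathcal{F}}$.
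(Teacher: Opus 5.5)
The paper does not prove this lemma; it only quotes it from Ye and He, so your proposal has to stand on its own. Parts (i), (ii) and (v) are correct. In (v), make explicit that an interior point $c$ satisfies $\langle \mathcal{F}v^{*},c-v^{*}\rangle\geq\varepsilon\|\mathcal{F}v^{*}\|^{2}>0$, because $c-\varepsilon\mathcal{F}v^{*}\in C$; then use $z_{t}=(1-t)y+tc$.

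The real gap is the degenerate case of (iii), and it cannot be closed under the stated hypotheses. Your repair is not available. The fact $\mathcal{F}\bar x\neq 0$ does not produce a $c\in C$ with $\langle \mathcal{F}\bar x,c-\bar x\rangle\neq 0$; that requires $\mathcal{F}\bar x$ not to be orthogonal to $\mathrm{aff}\,C-\bar x$. Even when such a $c$ exists, the Stampacchia point of the enlarged polytope may again be degenerate, so the procedure need not terminate. The underlying issue is that only the component of $\mathcal{F}$ along $\mathrm{aff}\,C$ enters the inequalities, and that component can vanish while $\mathcal{F}$ does not.

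Here is a counterexample. In $\mathbb{R}^{2}$ let $a_{1}=(2,0)$, $a_{2}=(-1,2)$, $a_{3}=(-2,-1)$ and $n_{1}=(0,1)$, $n_{2}=(-1,-1)$, $n_{3}=(1,-1)$. Set $G=\sum_{i}\rho_{i}n_{i}$ with $\rho_{i}(z)=\max\{0,\tfrac14-\|z-a_{i}\|\}$.
\begin{itemize}
\item The cyclic inequalities $\langle n_{2},a_{1}-a_{2}\rangle=-1$, $\langle n_{3},a_{2}-a_{3}\rangle=-2$ and $\langle n_{1},a_{3}-a_{1}\rangle=-1$ stay strict for all points of the corresponding bumps.
\item Hence $\langle G x,y-x\rangle>0$ together with $G y=\rho_{k}(y)n_{k}\neq 0$ forces $\langle n_{k},y-x\rangle>0$. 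So $G$ is Lipschitz and quasi-monotone on $\mathbb{R}^{2}$.
\item Testing the Minty inequality at $y=a_{1},a_{2},a_{3}$ confines any solution to $\{x_{2}\le 0\}\cap\{x_{1}+x_{2}\ge 1\}\cap\{x_{2}-x_{1}\ge 1\}=\emptyset$.
\end{itemize}
Now take $H=\mathbb{R}^{3}$, $C=\{z\in\mathbb{R}^{2}:\|z\|\le 3\}\times\{0\}$ and $\mathcal{F}(z,s)=(G z,1)$. Then $C$ is bounded, and $\mathcal{F}$ is Lipschitz, never zero, and quasi-monotone on $C$, yet $S_{\mathcal{F}}=\emptyset$. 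A bounded $C$ satisfies the condition in (iv) vacuously, so (iv) fails as stated too.

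What is true, and what your argument for (v) already proves, is the version of (iii) with $\mathrm{int}\,C\neq\emptyset$ in finite dimensions. Apply Hartman--Stampacchia to $C$ itself to get $\bar x\in VI(C,\mathcal{F})$. Since $\mathcal{F}\bar x\neq0$ and $\mathrm{int}\,C\neq\emptyset$, $\bar x$ cannot be degenerate, and the segment argument gives $\bar x\in S_{\mathcal{F}}$. Reducing to $K=\mathrm{conv}\{y_{1},\dots,y_{m}\}$ is exactly the step that discards this nondegeneracy.

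Independently of this, your sketch of (iv) stops short. After $\langle \mathcal{F}y,y-x^{*}\rangle\geq\langle \mathcal{F}y,w-x^{*}\rangle$ you need the sign of $\langle \mathcal{F}y,w-x^{*}\rangle$. But $x^{*}\in S_{\mathcal{F}}$ relative to $C_{t}$ only controls $\langle \mathcal{F}w,w-x^{*}\rangle$, so the extension from $C_{t}$ to all of $C$ is not established.
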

		\begin{lemma} \cite{7} \label{lem2.2}
			Suppose $\eta_{n}$, $\nu_{n}$ and $\theta_{n}$ are sequences in $[0,+\infty)$ such that
			\begin{align*}
				\eta_{n+1}\leq \eta_{n}+\nu_{n}(\eta_{n}-\eta_{n-1})+\theta_{n}~\text{for all}~n \geq 1, \sum \limits_{n=1}^{\infty}\theta_{n}<+\infty,
			\end{align*}
			and there exists a real number $\nu$ with $0 \leq \nu_{n}\leq \nu <1$ for all $n \in \mathbb{N}$. Then the following hold:
			\begin{enumerate} [label=(\roman*)]
				\item $\sum \limits_{n=1}^{\infty}[\eta_{n}-\eta_{n-1}]_{+}<+\infty$, where $[u]_{+}=\max\{u,0\}$;
				\item there exists $\eta^{*} \in [0,+\infty)$ such that $\lim \limits_{n \to \infty}\eta_{n}=\eta^{*}$.
			\end{enumerate}
		\end{lemma}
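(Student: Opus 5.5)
The plan is to prove the two conclusions of Lemma \ref{lem2.2} in sequence, starting from the positive parts of the increments. Set $\delta_n = \eta_n - \eta_{n-1}$ and $[\delta_n]_+ = \max\{\delta_n,0\}$. Because $\nu_n \ge 0$ and $\nu_n\delta_n \le \nu_n[\delta_n]_+ \le \nu [\delta_n]_+$, the hypothesis yields the scalar recursion
\begin{align*}
[\delta_{n+1}]_+ \le \nu [\delta_n]_+ + \theta_n, \quad n \ge 1.
\end{align*}
To justify this, note that if $\delta_{n+1}\le 0$ the left side is $0$. Otherwise $\delta_{n+1} \le \nu_n\delta_n + \theta_n \le \nu[\delta_n]_+ + \theta_n$. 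This reduction to a linear recursion with contraction factor $\nu<1$ is the key step.

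For part (i), unroll the recursion to get
\begin{align*}
[\delta_{n+1}]_+ \le \nu^{n}[\delta_1]_+ + \sum_{k=1}^{n}\nu^{n-k}\theta_k.
\end{align*}
Summing over $n$ and interchanging the order of summation in the double sum (a Cauchy product of $\sum\nu^j$ with $\sum\theta_k$) gives
\begin{align*}
\sum_{n\ge1}[\delta_{n+1}]_+ \le \frac{[\delta_1]_+}{1-\nu} + \frac{1}{1-\nu}\sum_{k\ge1}\theta_k < +\infty.
\end{align*}
Here $[\delta_1]_+$ is finite, since $\eta_0$ and $\eta_1$ are given nonnegative reals. This proves (i).

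For part (ii), define $w_n = \eta_n - \sum_{k=1}^{n}[\delta_k]_+$. Then $w_{n+1}-w_n = \delta_{n+1}-[\delta_{n+1}]_+ \le 0$, so $(w_n)$ is nonincreasing. It is bounded below by $-\sum_{k\ge1}[\delta_k]_+ > -\infty$, because $\eta_n \ge 0$ and part (i) holds. Hence $w_n$ converges. The partial sums $\sum_{k\le n}[\delta_k]_+$ also converge by (i), so $\eta_n = w_n + \sum_{k\le n}[\delta_k]_+$ converges to some limit $\eta^*$. Since $\eta_n\ge 0$ for all $n$, we get $\eta^*\in[0,+\infty)$.

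I expect no real obstacle. The only delicate points are the sign handling in the first inequality, where $\nu_n \ge 0$ is used so that negative increments can be dropped, and the index bookkeeping at $n=0,1$ in the unrolling.
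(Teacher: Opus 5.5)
Your proof is correct. The paper gives no proof of its own and simply cites the lemma; your argument is essentially the standard Alvarez--Attouch proof from that source: you reduce to $[\eta_{n+1}-\eta_n]_+\le \nu[\eta_n-\eta_{n-1}]_+ + \theta_n$, unroll and sum it to get (i), and then use the nonincreasing, bounded-below sequence $\eta_n-\sum_{k\le n}[\eta_k-\eta_{k-1}]_+$ to get (ii).
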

		
		\begin{lemma}  \cite{8} \label{lem2.3}
			Consider $C$ is a nonempty subset of $H$ and a sequence $\{x_{n}\}$ in $H$ such that the following two conditions hold:
			\begin{enumerate} [label=(\roman*)]
				\item for each $u  \in C$, $\lim \limits_{n \to \infty}\|x_{n}-u\|$ exists;
				\item every sequential weak cluster point of $\{x_{n}\}$ is in $C$.
			\end{enumerate}
			Then $\{x_{n}\}$ converges weakly to a point in $C$.
		\end{lemma}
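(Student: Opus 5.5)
This is Opial's lemma, and I would prove it directly from the Hilbert space identity in Lemma \ref{lem2.1}(i).

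First I establish boundedness and the existence of cluster points. If $C$ is nonempty, fix any $u\in C$. By (i) the sequence $\|x_n-u\|$ converges, so it is bounded, and hence $\{x_n\}$ is bounded in $H$. Bounded sets in a Hilbert space are weakly sequentially compact, so $\{x_n\}$ has at least one sequential weak cluster point. By (ii), every such cluster point lies in $C$.

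The main step is uniqueness of the weak cluster point. Suppose $x_{n_k}\rightharpoonup p$ and $x_{m_j}\rightharpoonup q$ with $p,q\in C$. Expanding with Lemma \ref{lem2.1}(i) gives
\begin{align*}
\|x_n-p\|^2-\|x_n-q\|^2 = 2\langle x_n, q-p\rangle + \|p\|^2-\|q\|^2 .
\end{align*}
The left-hand side converges, to $\ell$ say, by (i) applied to both $p$ and $q$. So $\langle x_n,q-p\rangle$ converges to a single limit. Passing to the limit along $n_k$ gives $\ell = 2\langle p,q-p\rangle+\|p\|^2-\|q\|^2$. Passing along $m_j$ gives $\ell = 2\langle q,q-p\rangle+\|p\|^2-\|q\|^2$. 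Subtracting the two expressions yields $2\|q-p\|^2=0$, hence $p=q$.

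To conclude, the bounded sequence $\{x_n\}$ has exactly one sequential weak cluster point $p\in C$. I then argue by contradiction: if $x_n$ did not converge weakly to $p$, there would be some $y\in H$, some $\varepsilon>0$ and a subsequence with $|\langle x_{n_k}-p,y\rangle|\ge\varepsilon$. That subsequence is bounded, so it has a further weakly convergent subsequence. Its limit must be $p$, which contradicts the inequality. Therefore $x_n\rightharpoonup p\in C$.

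The only delicate point is the uniqueness argument. It relies on the fact that the limits in (i) exist for both candidate points, which is exactly what allows the cross term $\langle x_n,q-p\rangle$ to have a single limit. The remaining steps are routine.
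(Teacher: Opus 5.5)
Your proof is correct. The paper gives no proof of its own and simply cites the result from \cite{8}. Your argument is the standard proof of Opial's lemma: boundedness from (i), uniqueness of the weak cluster point via $\|x_n-p\|^2-\|x_n-q\|^2=2\langle x_n,q-p\rangle+\|p\|^2-\|q\|^2$, and then the fact that a bounded sequence with a single weak sequential cluster point converges weakly.
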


		\section{Main Results}
		In this section, we present our main algorithm and theorems. The following assumptions are required to prove our main results.
		\begin{enumerate}
			\item[(A1):] The mapping $\mathcal{F}$ is $L$-Lipschitz continuous on $H$ and is sequentially weakly continuous on $C$.
			\item [(A2):] $S_{\mathcal{F}}\neq \phi$.
			\item [(A3):] If $x_{n} \rightharpoonup v^{*}$ and $\limsup \limits_{n \to \infty}\langle \mathcal{F}x_{n},x_{n}\rangle \leq \langle \mathcal{F}v^{*},v^{*} \rangle$, then $\lim \limits_{n \to \infty}\langle \mathcal{F}x_{n},x_{n}\rangle=\langle \mathcal{F}v^{*},v^{*} \rangle$.
			\item [(A4):] $0 \leq \nu_{n} \leq \nu_{n+1}\leq 1$.
			\item [(A5):] $0 \leq \xi_{n} \leq \xi_{n+1}\leq \xi < \min\{\frac{\bar{\theta}-\sqrt{2\bar{\theta}}}{\bar{\theta}}, \nu_{1}\}$; $\bar{\theta} \in (2,+\infty)$.
			\item [(A6):] $0 < \alpha \leq \alpha_{n}\leq \alpha_{n+1}<\frac{1}{1+\bar{\theta}}$; $\bar{\theta} \in (2,+\infty)$.
			\item [(A7):] Let $\{\delta_{n}\} \in [1,+\infty)$ such that $\lim \limits_{n \to \infty}\delta_{n}=1$, $\{\chi_{n}\} \in [1,+\infty)$ such that $\sum \limits_{n=1}^{\infty}(\chi_{n}-1)<+\infty$ and $\{\zeta_{n}\} \in [0,+\infty)$ such that $\sum \limits_{n=0}^{\infty}\zeta_{n}<+\infty$.
		\end{enumerate}
		\begin{table}[ht]
			\centering
			\begin{tabular}{c}
				\hline
				\textbf{Algorithm 4.1 } Modified double inertial subgradient extragradient method (MDISEM) \\
				\hline
			\end{tabular}
		\end{table} 
		\textbf{Initialization:} Choose $\mu \in (0,1)$, $\lambda_{1}>0$, $\sigma \in (0,\frac{2}{\mu})$ and $\beta \in (\frac{\sigma}{2},\frac{1}{\mu})$. Choose $x_{0},x_{1} \in H $ and  calculate the iterate $x_{n+1}$ as:
		\\
		\textbf{Step 1.} Compute
		\begin{align}
			w_{n}&=x_{n}+\nu_{n}(x_{n}-x_{n-1}),\\
			y_{n}&=P_{C}(w_{n}-\beta \lambda_{n}\mathcal{F}w_{n})
		\end{align}

		and update $\lambda_{n+1}$, the step-size, as
		\begin{align} \label{stepsize}
			\lambda_{n+1}=\begin{array}{cc}
				\bigg{ \{ }& 
				\begin{array}{cc}
					\min\{ \frac{\mu \delta_{n} \|w_{n}-y_{n}\|}{\|\mathcal{F}w_{n}-\mathcal{F}y_{n}\|},\chi_{n}\lambda_{n}+\zeta_{n}
					\} & \text{if} ~\mathcal{F}w_{n}\neq \mathcal{F}y_{n}\\
					\chi_{n}\lambda_{n}+\zeta_{n} & \text{otherwise}.
				\end{array}
			\end{array}
		\end{align}
		We consider $y_{n}$ as a solution of (VIP) if $w_{n}=y_{n}$ (or $\mathcal{F}y_n=0$).  Otherwise \\
		\textbf{Step 2.} Compute 
		\begin{align*}
			u_{n}=P_{T_{n}}(w_{n}-\sigma \lambda_{n}d_{n}\mathcal{F}y_{n}),
		\end{align*}
		where
		\begin{align*}
			T_{n}=\{x \in H : \langle w_{n}-\beta \lambda_{n}\mathcal{F}w_{n}-y_{n},x-y_{n}\rangle \leq 0\},
		\end{align*}
		and
		\begin{align} \label{dn value}
			d_{n}=\frac{\langle w_{n}-y_{n}, \eta_{n}\rangle}{\|\eta_{n}\|^2},
		\end{align}
		\begin{align} \label{en value}
			\eta_{n}=w_{n}-y_{n}-\beta \lambda_{n}(\mathcal{F}w_{n}-\mathcal{F}y_{n}).
		\end{align}
		\textbf{ Step 3.} Compute
		\begin{align}
			v_{n}&=x_{n}+\xi_{n} (x_{n}-x_{n-1}),\\
			x_{n+1}&=(1-\alpha_{n})v_{n}+\alpha_{n}u_{n}.
		\end{align}

		Set $n \gets n+1$ and go to \textbf{Step 1.}
		\\
		We now present a flowchart for our Algorithm 4.1, illustrating its operational steps and process in \tablename~\ref{fig:algorithm_flowchart}.
		\begin{table}[ht!]
			\centering
			
			\begin{tikzpicture}[node distance=2cm, every node/.style={text centered}]
				
				\tikzstyle{startstop} = [rectangle, rounded corners, minimum width=3cm, minimum height=1cm, text centered, draw=black, fill=red!30]
				\tikzstyle{process} = [rectangle, minimum width=3cm, minimum height=1cm, text centered, draw=black, fill=orange!30]
				\tikzstyle{decision} = [diamond, minimum width=3cm, minimum height=1cm, text centered, draw=black, fill=green!30, aspect=2]
				\tikzstyle{arrow} = [thick,->,>=Stealth]
				
			
				\node (start) [startstop] {Start};
				\node (init) [process, below of=start] {Initialization: $\mu \in (0,1)$, $\lambda_{1}>0$, $\sigma \in (0,\frac{2}{\mu})$, $\beta \in (\frac{\sigma}{2},\frac{1}{\mu})$, $x_{0}$ and $x_{1}\in H$};
				\node (step1) [process, below of=init] {Step 1: Compute $w_{n}$ and  $y_{n}$};
				\node (update) [process, below of=step1] {Update $\lambda_{n+1}$ using \eqref{stepsize}};
				\node (decision1) [decision, below of=update, yshift=-1cm] {Is $w_{n}=y_{n}$ (or $\mathcal{F}y_n=0$)?};
				\node (stop) [startstop, below of=decision1, yshift=-1cm] {Stop};
				\node (step2) [process, right of=decision1, xshift=4cm, node distance=3.3cm] {Step 2: Compute $u_{n}$ using \eqref{dn value} and \eqref{en value}};
				\node (step3) [process, below of=step2, node distance=3cm] {Step 3: Compute $v_{n}$ and $x_{n+1}$};
				\node (next) [process, below of=step3, node distance=3cm] {Set $n \gets n+1$ and go to \textbf{Step 1} };
				
				\draw [arrow] (start) -> (init);
				\draw [arrow] (init) -> (step1);
				\draw [arrow] (step1) -> (update);
				\draw [arrow] (update) -> (decision1);
				\draw [arrow] (decision1) -- node[anchor=east] {Yes} (stop);
				\draw [arrow] (decision1.east) -- node[above] {No} (step2.west);
				\draw [arrow] (step2) -> (step3);
				\draw [arrow] (step3) -> (next);
				\draw [arrow] (next.east) -| ++(1,0) |- (step1.east);
			
			\end{tikzpicture}
			\caption{Flowchart of the Algorithm 4.1 with initialization and iterative steps.}
			\label{fig:algorithm_flowchart}
		\end{table}
		\begin{rmrk}
			\begin{enumerate} \label{remarks}
            \renewcommand{\labelenumi}{\roman{enumi}.}
            \item It is worth noting that Li et al. \cite{li} introduced a double inertial subgradient
extragradient method combined with the projection contraction method, incorporating a line
search rule, to address quasi-monotone variational inequalities. In contrast, we focus on the double inertial subgradient extragradient method combined with the projection contraction method to address non-monotone variational inequalities, without requiring any monotonicity assumption on the cost operator. Moreover, unlike the line search strategy adopted by Li et al. \cite{li}, our algorithm utilizes a self-adaptive step-size rule. Additionally, the second projection in our method is computed onto a generalized half-space.
				\item The step-size sequence is well defined under Assumption (A7), and the existence of $\lim \limits_{n \to \infty}\lambda_{n}$ is assured. Furthermore, it is straightforward to demonstrate that the sequence $\{\lambda_{n}\}$ possesses a lower bound, denoted as $\{\frac{\mu}{L}, \lambda_{1}\}$. For more details, we refer \cite{17}.
				\item We utilize the parameters $\beta$ and $\sigma$ to enhance the subgradient extragradient method and the projection and contraction method. Through numerical experiments, we aim to demonstrate that choosing appropriate values for $\beta$ and $\sigma$ can significantly improve both convergence speed and accuracy.
                \item 
Assumption $(A3)$ holds in several important cases (see \cite{liu}), that is,  
(a) when $x_n \rightharpoonup p^*$ and $\mathcal{F}$ is sequentially weakly continuous and monotone; 
and (b) when $x_n \rightharpoonup p^*$ and $\mathcal{F}$ is sequentially weakly--strongly continuous $(x_n \rightharpoonup p^* \implies \mathcal{F}x_n \to \mathcal{F}p^*)$.

			\end{enumerate}
		\end{rmrk}
        
		We now prove the weak convergence result using Algorithm 4.1 (MDISEM).
		\begin{lemma} \label{Lem4.1a}
			Assume that Assumptions (A1)-(A7) hold. Let $v^{*}$ be one of the weak cluster points of subsequence $\{w_{n_{j}}\}$ of $\{w_{n}\}$. If $\lim \limits_{j \to \infty}\|w_{n_{j}}-y_{n_{j}}\|=0$, then $v^{*} \in VI(C,\mathcal{F})$.
		\end{lemma}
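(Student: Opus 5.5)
We follow the standard argument of Liu and Yang \cite{liu} for non-monotone problems, adapted to the scaled projection $y_n=P_C(w_n-\beta\lambda_n\mathcal{F}w_n)$.

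\textbf{Step 1 (limit point lies in $C$).} Since $\|w_{n_j}-y_{n_j}\|\to 0$, we get $y_{n_j}\rightharpoonup v^{*}$. Because $y_{n_j}\in C$ and $C$ is closed and convex, hence weakly closed, $v^{*}\in C$.

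\textbf{Step 2 (approximate variational inequality along the subsequence).} The projection characterization gives, for all $x\in C$,
\begin{align*}
\langle w_{n_j}-\beta\lambda_{n_j}\mathcal{F}w_{n_j}-y_{n_j},x-y_{n_j}\rangle\le 0 .
\end{align*}
Dividing by $\beta\lambda_{n_j}$ and rearranging,
\begin{align*}
\langle \mathcal{F}w_{n_j},x-w_{n_j}\rangle \ge \frac{1}{\beta\lambda_{n_j}}\langle w_{n_j}-y_{n_j},x-y_{n_j}\rangle+\langle \mathcal{F}w_{n_j},y_{n_j}-w_{n_j}\rangle .
\end{align*}
By Remark \ref{remarks}(ii), $\lambda_n\ge\min\{\mu/L,\lambda_1\}>0$, so $1/(\beta\lambda_{n_j})$ is bounded. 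The weakly convergent sequence $\{w_{n_j}\}$ is bounded, so $\{y_{n_j}\}$ is bounded, and $\{\mathcal{F}w_{n_j}\}$ is bounded by Lipschitz continuity (A1). Hence both right-hand terms tend to zero, and
\begin{align*}
\liminf_{j\to\infty}\langle \mathcal{F}w_{n_j},x-w_{n_j}\rangle\ge 0 \quad\text{for all } x\in C .
\end{align*}

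\textbf{Step 3 (passing to the limit with (A3)).} This is the main obstacle: there is no monotonicity, and $\langle\mathcal{F}w_{n_j},w_{n_j}\rangle$ is not weakly continuous. We use $y_{n_j}\in C$ in place of $w_{n_j}$, since sequential weak continuity of $\mathcal{F}$ in (A1) is only assumed on $C$. Replacing $w_{n_j}$ by $y_{n_j}$ costs an error of size $\|\mathcal{F}w_{n_j}-\mathcal{F}y_{n_j}\|\,\|x\|+|\langle\mathcal{F}w_{n_j},w_{n_j}\rangle-\langle\mathcal{F}y_{n_j},y_{n_j}\rangle|$, which tends to zero by Lipschitz continuity and boundedness. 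So $\liminf_j\langle\mathcal{F}y_{n_j},x-y_{n_j}\rangle\ge0$ for all $x\in C$.

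Take $x=v^{*}$. Since $\mathcal{F}y_{n_j}\rightharpoonup\mathcal{F}v^{*}$,
\begin{align*}
\limsup_j\langle\mathcal{F}y_{n_j},y_{n_j}\rangle\le\lim_j\langle\mathcal{F}y_{n_j},v^{*}\rangle=\langle\mathcal{F}v^{*},v^{*}\rangle .
\end{align*}
Assumption (A3) then gives $\langle\mathcal{F}y_{n_j},y_{n_j}\rangle\to\langle\mathcal{F}v^{*},v^{*}\rangle$. For arbitrary $x\in C$ we have $\langle\mathcal{F}y_{n_j},x\rangle\to\langle\mathcal{F}v^{*},x\rangle$, so
\begin{align*}
0\le\liminf_j\langle\mathcal{F}y_{n_j},x-y_{n_j}\rangle=\langle\mathcal{F}v^{*},x-v^{*}\rangle .
\end{align*}
Therefore $v^{*}\in VI(C,\mathcal{F})$.

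If the passage from $w_{n_j}$ to $y_{n_j}$ needed more care, we would apply it once in Step 2 and work with $y_{n_j}$ throughout; this avoids needing weak continuity of $\mathcal{F}$ off $C$.
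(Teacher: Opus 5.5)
Your proposal is correct and follows essentially the paper's argument. Both proofs start from the projection inequality, transfer the $\liminf$ bound from $w_{n_j}$ to $y_{n_j}\in C$, test with $x=v^{*}$ to obtain $\limsup_j\langle\mathcal{F}y_{n_j},y_{n_j}\rangle\le\langle\mathcal{F}v^{*},v^{*}\rangle$, and then invoke (A3). Your version drops the paper's unneeded case split on $\limsup_j\|\mathcal{F}y_{n_j}\|$ and its auxiliary sequence $s_j$. It also correctly uses the lower bound $\lambda_n\ge\min\{\mu/L,\lambda_1\}$ to control $1/(\beta\lambda_{n_j})$, where the paper only says $\{\lambda_n\}$ is bounded.
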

		\begin{proof}
			We see that $w_{n_{j}} \rightharpoonup v^{*}$ and $\lim \limits_{j \to \infty}\|w_{n_{j}}-y_{n_{j}}\|=0$. It implies that $y_{n_{j}} \rightharpoonup v^{*}$ and since $y_{n} \in C$, we have $v^{*} \in C$. We now divide the proof into the following two cases.\par
			\textbf{Case 1:} If $\limsup \limits_{j \to \infty}\|\mathcal{F}y_{n_{j}}\|=0$, then we have $\lim \limits_{j \to \infty}\|\mathcal{F}y_{n_{j}}\|=\liminf \limits_{j \to \infty}\|\mathcal{F}y_{n_{j}}\|=0$. Then, from Assumption $(A1)$, we see that
			\begin{align*}
				0 < \|\mathcal{F}v^{*}\| \leq \liminf \limits_{j \to \infty}\|\mathcal{F}y_{n_{j}}\|=0.
			\end{align*}
			This means that $\mathcal{F}v^{*}=0$. \par
			\textbf{Case 2:} If $\limsup \limits_{j \to \infty}\|\mathcal{F}y_{n_{j}}\|>0$. Then, without loss of generality, we can assume that 
			$\limsup \limits_{j \to \infty}\|\mathcal{F}y_{n_{j}}\|=T_{1}>0$. Thus, we can find $j_{0}\geq 1$ such that $\|\mathcal{F}y_{n_{j}}\|>\frac{T_{1}}{2}$, for all $j \geq j_{0}$. We have
			\begin{align*}
				\langle w_{n_{j}}-\beta \lambda_{n_{j}}\mathcal{F}w_{n_{j}}-y_{n_{j}},x-y_{n_{j}}\rangle \leq 0~ \text{for all}~ x  \in C,
			\end{align*}
			which is further equivalent to
			\begin{align*}
				\frac{1}{\beta \lambda_{n_{j}}} \langle w_{n_{j}}-y_{n_{j}},x-y_{n_{j}} \rangle \leq \langle \mathcal{F}w_{n_{j}},x-y_{n_{j}} \rangle~ \text{for all}~ x  \in C.
			\end{align*}
			Thus, we have
			\begin{align} \label{newlem1a}
				\frac{1}{\beta \lambda_{n_{j}}} \langle w_{n_{j}}-y_{n_{j}},x-y_{n_{j}}\rangle+\langle \mathcal{F}w_{n_{j}},y_{n_{j}}-w_{n_{j}} \rangle \leq \langle \mathcal{F}w_{n_{j}},x-w_{n_{j}} \rangle~ \text{for all}~ x  \in C.
			\end{align}
			Since $\{w_{n_{j}}\}$ is weakly convergent, it is bounded. Therefore, from the Lipschitz continuity of $\mathcal{F}$, $\{\mathcal{F}w_{n_{j}}\}$ is also bounded. Also, as $\lim \limits_{k \to \infty}\|w_{n_{j}}-y_{n_{j}}\|=0$, we see that $\{y_{n_{j}}\}$ is also bounded. From the fact that $\{\lambda_{n}\}$ and $\beta$ are bounded, taking $j \to \infty$ in \eqref{newlem1a}, we have
			\begin{align} \label{newlem2a}
				\liminf \limits_{j \to \infty}\langle \mathcal{F}w_{n_{j}},x-w_{n_{j}} \rangle \geq 0,~\text{for all}~x  \in C. 
			\end{align}
			Also
			\begin{align} \label{newlem3a}
				\langle \mathcal{F}y_{n_{j}},x-y_{n_{j}} \rangle=\langle \mathcal{F}y_{n_{j}}-\mathcal{F}w_{n_{j}},x-w_{n_{j}} \rangle+\langle \mathcal{F}w_{n_{j}},x-w_{n_{j}} \rangle+\langle \mathcal{F}y_{n_{j}},w_{n_{j}}-y_{n_{j}}\rangle.
			\end{align}
			Since $\lim \limits_{j \to \infty}\|w_{n_{j}}-y_{n_{j}}\|=0$, and $\mathcal{F}$ is $L$-Lipschitz continuous on $H$, we get
			\begin{align*}
				\lim \limits_{n \to \infty}\|\mathcal{F}w_{n_{j}}-\mathcal{F}y_{n_{j}}\|=0,
			\end{align*}
			which together with \eqref{newlem2a} and \eqref{newlem3a} implies that
			\begin{align} \label{use1a}
				\liminf \limits_{j \to \infty}\langle \mathcal{F}y_{n_{j}},x-y_{n_{j}} \rangle \geq 0,~\text{for all}~x  \in C.
			\end{align}
			We choose a positive sequence $\{s_{n}\}$ such that $\lim \limits_{n \to \infty}s_{n}=0$, and 
			\begin{align*}
				\langle \mathcal{F}y_{n_{j}},x-y_{n_{j}}\rangle+s_{j}>0,~\text{for all}~j \geq 0.
			\end{align*}
			It implies that
			\begin{align} \label{star1}
				\langle \mathcal{F}y_{n_{j}},x \rangle+s_{j}>\langle \mathcal{F}y_{n_{j}},y_{n_{j}}\rangle,~\text{for all}~j\geq 0.
			\end{align}
			Setting $x\coloneqq v^{*}$ in \eqref{star1}, we get 
			\begin{align} \label{star2}
				\langle \mathcal{F}y_{n_{j}},v^{*} \rangle+s_{j}>\langle \mathcal{F}y_{n_{j}},y_{n_{j}}\rangle,~\text{for all}~j\geq 0.
			\end{align}
			Taking $j \to \infty$ in \eqref{star2} and from the fact that $y_{n_{j}} \rightharpoonup v^{*}$, we get
			\begin{align} \label{star3}
				\langle \mathcal{F}v^{*},v^{*}\rangle \geq \limsup \limits_{j \to \infty} \langle \mathcal{F}y_{n_{j}},y_{n_{j}}\rangle.
			\end{align}
			Using \eqref{star3} and Assumption (A3), we get
			\begin{align*}
				\lim \limits_{j \to \infty}\langle \mathcal{F}y_{n_{j}},y_{n_{j}}\rangle=\langle \mathcal{F}v^{*},v^{*}\rangle.
			\end{align*}
			Finally, from \eqref{star1}, we get
			\begin{align*}
				\langle \mathcal{F}v^{*},x\rangle&=\lim \limits_{j \to \infty}(\langle \mathcal{F}y_{n_{j}},x \rangle+s_{j}) \\
				&\geq \liminf \limits_{j \to \infty}\langle \mathcal{F}y_{n_{j}},y_{n_{j}} \rangle \\
				&=\lim \limits_{j \to \infty}\langle \mathcal{F}y_{n_{j}},y_{n_{j}}\rangle \\
				&=\langle \mathcal{F}v^{*},v^{*}\rangle.
			\end{align*}
			Thus, we get $\langle \mathcal{F}v^{*},x-v^{*} \rangle \geq 0$, ~\text{for all}~$x \in C$. Hence $v^{*} \in VI(C,\mathcal{F})$.
			
		\end{proof}
		\begin{lemma} \label{lem4.2a}
			Assume that the Assumptions (A1)-(A7) hold. Let $\{x_{n}\}$ be the sequence obtained by Algorithm 4.1. Then the following results hold:
			\begin{enumerate}
				\item[(i)] $\{x_{n}\}$ is bounded.
				\item[(ii)] $\lim \limits_{n \to \infty}\|x_{n+1}-x_{n}\|=0$.
				\item[(iii)] $\lim \limits_{n \to \infty}\|x_{n}-p^{*}\|$ exists for all $p^{*} \in S_{\mathcal{F}}$.
				\item [(iv)] $\lim \limits_{n \to \infty}\|x_{n}-w_{n}\|=0$.
			\end{enumerate}
		\end{lemma}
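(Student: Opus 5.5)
We will follow the standard double-inertial Fejér-type argument, in three stages: a one-step estimate for $u_n$, a conversion of that estimate into an inequality on the $x_n$ sequence, and an appeal to Lemma \ref{lem2.2}.

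\textbf{Stage 1: key estimate for $u_n$.} Fix $p^{*}\in S_{\mathcal{F}}$ and aim for
\begin{align*}
\|u_n-p^{*}\|^2\leq \|w_n-p^{*}\|^2-\|w_n-u_n\|^2+ c_n\|w_n-y_n\|^2 ,
\end{align*}
or more precisely an inequality of the form $\|u_n-p^{*}\|^2\le\|w_n-p^{*}\|^2-\kappa\|w_n-u_n\|^2$ with $\kappa>0$ for large $n$.

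To get it, we plan the following steps.
\begin{enumerate}[label=(\alph*)]
\item Use the projection property of $P_{T_n}$ together with $C\subset T_n$, so that $p^{*}\in T_n$.
\item Use $p^{*}\in S_{\mathcal{F}}$ and $y_n\in C$ to obtain $\langle \mathcal{F}y_n,y_n-p^{*}\rangle\ge 0$. This is the only place the dual solution set enters, and it is why no monotonicity of $\mathcal{F}$ is needed.
\item Use $u_n\in T_n$, which gives $\langle w_n-y_n-\beta\lambda_n\mathcal{F}w_n,u_n-y_n\rangle\le0$.
\end{enumerate}
Combining these as in the PCM analysis yields $\|u_n-p^{*}\|^2\le\|w_n-p^{*}\|^2-\|w_n-u_n\|^2+2\sigma\lambda_n d_n\langle \mathcal{F}y_n,y_n-u_n\rangle$.

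We then bound the last term through $\eta_n$. By the step-size rule, eventually $\lambda_n\|\mathcal{F}w_n-\mathcal{F}y_n\|\le \mu\delta_n\frac{\lambda_n}{\lambda_{n+1}}\|w_n-y_n\|$, and $\mu\delta_n\lambda_n/\lambda_{n+1}\to\mu$ by Remark \ref{remarks}(ii) and (A7). Hence for large $n$ we get $\langle w_n-y_n,\eta_n\rangle\ge(1-\beta\mu')\|w_n-y_n\|^2$ and $\|\eta_n\|\le(1+\beta\mu')\|w_n-y_n\|$, where $\mu'$ is slightly larger than $\mu$ and $\beta\mu'<1$. With these bounds on $d_n$, the conditions $\beta\in(\sigma/2,1/\mu)$ and $\sigma<2/\mu$ should absorb the cross term. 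The outcome is a nonnegative loss $\kappa\|w_n-u_n\|^2$ (equivalently, $\|w_n-u_n\|^2$ and $\|w_n-y_n\|^2$ terms).

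\textbf{Stage 2: reduction to the $x_n$ sequence.} Write $x_{n+1}-p^{*}=(1-\alpha_n)(v_n-p^{*})+\alpha_n(u_n-p^{*})$ and apply Lemma \ref{lem2.1}(ii) to get
\begin{align*}
\|x_{n+1}-p^{*}\|^2=(1-\alpha_n)\|v_n-p^{*}\|^2+\alpha_n\|u_n-p^{*}\|^2-\alpha_n(1-\alpha_n)\|v_n-u_n\|^2 .
\end{align*}
Expand $\|w_n-p^{*}\|^2$ and $\|v_n-p^{*}\|^2$ via Lemma \ref{lem2.1}(ii) in terms of $\|x_n-p^{*}\|^2$, $\|x_{n-1}-p^{*}\|^2$ and $\|x_n-x_{n-1}\|^2$, using the coefficients $\nu_n$ and $\xi_n$. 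Since $\nu_n$ may approach $1$, the term $\|w_n-u_n\|^2$ must be handled by expressing $u_n$ through $x_{n+1}$ and $v_n$, namely $u_n=v_n+\frac{1}{\alpha_n}(x_{n+1}-v_n)$. This produces a negative multiple of $\|x_{n+1}-v_n\|^2$, and we then bound $\|x_{n+1}-v_n\|^2$ from below via $\|x_{n+1}-x_n\|^2$ and $\|x_n-x_{n-1}\|^2$, for example using $\|a-b\|^2\ge(1-\frac1{\bar\theta})\|a\|^2+(1-\bar\theta)\|b\|^2$ with the parameter $\bar\theta$.

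We expect to arrive at an inequality of the form
\begin{align*}
\Gamma_{n+1}\le\Gamma_n+\xi_n(\Gamma_n-\Gamma_{n-1})-\rho\|x_{n+1}-x_n\|^2 ,
\end{align*}
with $\Gamma_n=\|x_n-p^{*}\|^2-\xi_{n-1}\|x_{n-1}-p^{*}\|^2+\gamma_n\|x_n-x_{n-1}\|^2$ and $\rho>0$.

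\textbf{Main obstacle.} Getting $\rho>0$ and $\Gamma_n\ge0$ simultaneously is exactly where the restrictions on $\xi$ in (A5) and $\alpha_n<\frac1{1+\bar\theta}$ in (A6) enter. Here we will try to verify that $1-\xi-\xi\bar\theta(1-\xi)$-type quantities are positive, i.e.\ $\xi<\frac{\bar\theta-\sqrt{2\bar\theta}}{\bar\theta}$.

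\textbf{Stage 3: conclusions.} Monotonicity of $\xi_n$ gives $\Gamma_{n+1}-\xi\Gamma_n$ nonincreasing. This yields boundedness of $\|x_n-p^{*}\|$ (i), and summing gives $\sum\|x_{n+1}-x_n\|^2<\infty$, hence (ii). Then $\|x_n-w_n\|=\nu_n\|x_n-x_{n-1}\|\to0$, which is (iv). Finally, with $\eta_n=\|x_n-p^{*}\|^2$, the inequality $\eta_{n+1}\le\eta_n+\xi_n(\eta_n-\eta_{n-1})+\theta_n$ holds with $\theta_n$ a multiple of $\|x_n-x_{n-1}\|^2$, which is summable. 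Lemma \ref{lem2.2} therefore gives existence of $\lim\|x_n-p^{*}\|$, which is (iii).
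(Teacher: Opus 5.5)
Your skeleton matches the paper: a Fej\'er-type bound for $u_n$, the convex-combination identity for $x_{n+1}$, expansion of both inertial terms, a Lyapunov functional, and Lemma \ref{lem2.2}. Stage 1 is sound. Once $d_n>0$ for $n\ge n_0$ (this is where $\beta<1/\mu$ enters), expanding $\frac{2\sigma}{\beta}d_n\langle\eta_n,w_n-u_n\rangle$ produces $+\|w_n-u_n\|^2$, which cancels. What remains is $\|u_n-p^*\|^2\le\|w_n-p^*\|^2-\|w_n-u_n-\frac{\sigma}{\beta}d_n\eta_n\|^2-\frac{\sigma(2\beta-\sigma)}{\beta^2}d_n^2\|\eta_n\|^2$, from which your bound with $\kappa=\frac{2\beta-\sigma}{2\beta}$ follows.

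The gap is in Stages 2--3, which carry all of (i)--(ii). The negative term you convert via $u_n-v_n=\frac{1}{\alpha_n}(x_{n+1}-v_n)$ is $-\alpha_n(1-\alpha_n)\|v_n-u_n\|^2$, not $\|w_n-u_n\|^2$, since $w_n\neq v_n$. More importantly, expanding $\alpha_n\|w_n-p^*\|^2$ contributes $-\alpha_n\nu_n\|x_{n-1}-p^*\|^2$. With the split $\|x_{n+1}-v_n\|^2\ge(1-\xi_n)\|x_{n+1}-x_n\|^2+(\xi_n^2-\xi_n)\|x_n-x_{n-1}\|^2$, the estimates actually give
\begin{align*}
\|x_{n+1}-p^*\|^2\le(1+\theta_n)\|x_n-p^*\|^2-\theta_n\|x_{n-1}-p^*\|^2-\kappa_n\|x_{n+1}-x_n\|^2+\pi_n\|x_n-x_{n-1}\|^2,
\end{align*}
with $\theta_n=\alpha_n\nu_n+(1-\alpha_n)\xi_n$, $\kappa_n=\frac{(1-\alpha_n)(1-\xi_n)}{\alpha_n}$, and $\pi_n\ge0$ bounded.

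So the inertial coefficient is $\theta_n$, not $\xi_n$. Since $\nu_n$ may be as large as $1$, this is not negligible, and the same correction is needed when you apply Lemma \ref{lem2.2} for (iii). Your target $\Gamma_{n+1}\le\Gamma_n+\xi_n(\Gamma_n-\Gamma_{n-1})-\rho\|x_{n+1}-x_n\|^2$ also double-counts the inertia. Once $-\theta_n\|x_{n-1}-p^*\|^2$ is inside the functional, the correct statement is first order. For $\Omega_n=\|x_n-p^*\|^2-\theta_n\|x_{n-1}-p^*\|^2+\pi_n\|x_n-x_{n-1}\|^2$ one has $\Omega_{n+1}-\Omega_n\le(\theta_n-\theta_{n+1})\|x_n-p^*\|^2-(\kappa_n-\pi_{n+1})\|x_{n+1}-x_n\|^2$. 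Your step ``$\Gamma_{n+1}-\xi\Gamma_n$ nonincreasing'' rests on a relation the estimates do not produce.

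Two verifications, which are the real content of (i), are then missing or wrong. First, you need $\theta_n\le\theta_{n+1}$. Writing $\theta_n=\xi_n+\alpha_n(\nu_n-\xi_n)$ gives $\theta_{n+1}-\theta_n\ge(1-\alpha_n)(\xi_{n+1}-\xi_n)+\alpha_n(\nu_{n+1}-\nu_n)\ge0$. This uses $\nu_{n+1}\ge\xi_{n+1}$ (from $\xi<\nu_1$), monotone $\alpha_n$ and monotone $\nu_n$. That is why (A4), the monotonicity in (A6), and $\xi<\nu_1$ in (A5) are imposed; monotonicity of $\xi_n$ alone is not enough.

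Second, you need $\kappa_n-\pi_{n+1}\ge r>0$. Use $\frac{1-\alpha_n}{\alpha_n}\ge\frac{1-\alpha_{n+1}}{\alpha_{n+1}}>\bar\theta$, $\xi_n\le\xi_{n+1}\le\xi$, and $\xi_{n+1}(1+\xi_{n+1}),\,\nu_{n+1}(1+\nu_{n+1})\le2$. These give $\kappa_n-\pi_{n+1}\ge\bar\theta(1-\xi)^2-2=:r$, and $r>0$ is exactly $\xi<\frac{\bar\theta-\sqrt{2\bar\theta}}{\bar\theta}$. Your candidate $(1-\xi)(1-\xi\bar\theta)>0$ means $\xi<1/\bar\theta$, so your ``i.e.'' is false. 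Likewise, the Young split with parameter $\bar\theta$ that you suggest would need $\xi^2<1/\bar\theta$, which (A5) does not guarantee.

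Finally, $\Gamma_n\ge0$ is not needed. From $\Omega_n\le\Omega_{n_0}$ and $\pi_n\ge0$ you get $\|x_n-p^*\|^2\le\varepsilon\|x_{n-1}-p^*\|^2+\Omega_{n_0}$, where $\theta_n\le\varepsilon:=\frac{1}{1+\bar\theta}+\xi(1-\alpha)<1$; this gives (i). Then $-\Omega_{t+1}\le\varepsilon\|x_t-p^*\|^2$ is bounded, so summing the decrease gives $\sum\|x_{n+1}-x_n\|^2<\infty$, hence (ii) and (iv). Lemma \ref{lem2.2} with coefficient $\theta_n\le\varepsilon$ then gives (iii).
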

		\begin{proof} 
		\textbf{(i):} First we have to prove that $\{x_{n}\}$ is bounded. From the definition of $u_{n}$ and property of projection, we have for any $p^{*} \in S_{\mathcal{F}}$
			\begin{align} \label{T1}
				\|u_{n}-p^{*}\|^2&=\|P_{T_{n}}(w_{n}-\sigma \lambda_{n}d_{n}\mathcal{F}y_{n})-p^{*}\|^2\notag \\
				&\leq \|w_{n}-\sigma \lambda_{n} d_{n}\mathcal{F}y_{n}-p^{*}\|^2-\|w_{n}-\sigma \lambda_{n}d_{n}\mathcal{F}y_{n}-u_{n}\|^2\notag \\
				&=\|w_{n}-p^{*}\|^2+\|\sigma \lambda_{n}d_{n}\mathcal{F}y_{n}\|^2+2\langle w_{n}-p^{*},-\sigma \lambda_{n}d_{n}\mathcal{F}y_{n} \rangle-\|w_{n}-u_{n}\|^2\notag \\
				&-\|\sigma \lambda_{n} d_{n}\mathcal{F}y_{n} \|^2-2\langle w_{n}-u_{n},-\sigma \lambda_{n} d_{n} \mathcal{F}y_{n} \rangle \notag \\
				&=\|w_{n}-p^{*}\|^2-2\sigma \lambda_{n} d_{n} \langle w_{n}-p^{*},\mathcal{F}y_{n} \rangle-\|w_{n}-u_{n}\|^2+2\sigma \lambda_{n} d_{n} \langle w_{n}-u_{n},\mathcal{F}y_{n} \rangle \notag \\
				&=\|w_{n}-p^{*}\|^2-2\sigma \lambda_{n} d_{n} \langle \mathcal{F}y_{n},w_{n}-u_{n}+u_{n}-p^{*} \rangle-\|w_{n}-u_{n}\|^2+2\sigma \lambda_{n}d_{n} \langle w_{n}-u_{n},\mathcal{F}y_{n}\rangle \notag \\
				&=\|w_{n}-p^{*}\|^2-2\sigma \lambda_{n} d_{n} \langle \mathcal{F}y_{n},u_{n}-p^{*} \rangle-\|w_{n}-u_{n}\|^2.
			\end{align}
			Since $y_{n} \in C$ and $p^{*} \in S_{\mathcal{F}}$, we get
			\begin{align*}
				\langle \mathcal{F}y_{n},y_{n}-p^{*} \rangle \geq 0.
			\end{align*}
			This can be rewritten as
			\begin{align*}
				\langle \mathcal{F}y_{n},y_{n}-u_{n}+u_{n}-p^{*}\rangle \geq 0,
			\end{align*}
			which implies
			\begin{align} \label{eqm}
				\langle \mathcal{F}y_{n},p^{*}-u_{n}\rangle \leq \langle \mathcal{F}y_{n},y_{n}-u_{n} \rangle.
			\end{align}
			Using \eqref{dn value} and  \eqref{en value}, we have
			\begin{align}\label{T3}
				d_{n}=\frac{\langle w_{n}-y_{n}, \eta_{n} \rangle}{\|\eta_{n}\|^2}&=\frac{\langle w_{n}-y_{n}, w_{n}-y_{n}-\beta \lambda_{n}(\mathcal{F}w_{n}-\mathcal{F}y_{n}) \rangle}{\|\eta_{n}\|^2} \notag\\
				&=\frac{\|w_{n}-y_{n}\|^2-\langle w_{n}-y_{n},\beta \lambda_{n}(\mathcal{F}w_{n}-\mathcal{F}y_{n})\rangle}{\|\eta_{n}\|^2}\notag\\
				&\geq \frac{\|w_{n}-y_{n}\|^2- \beta \lambda_{n}\|w_{n}-y_{n}\|\|\mathcal{F}w_{n}-\mathcal{F}y_{n}\|}{\|\eta_{n}\|^2}.
			\end{align}
			Using step-size rule \eqref{stepsize}, we have
			\begin{align} \label{ss1}
				\|\mathcal{F}w_{n}-\mathcal{F}y_{n}\| \leq \frac{\mu \delta_{n}}{\lambda_{n+1}}\|w_{n}-y_{n}\|.
			\end{align}
			Using \eqref{ss1} in \eqref{T3}, we get
			\begin{align} \label{TTs1}
				d_{n}\geq \frac{(1-\beta \mu \delta_{n}\frac{\lambda_{n}}{\lambda_{n+1}})\|w_{n}-y_{n}\|^2}{\|\eta_{n}\|^2}
			\end{align}
			Now, again from \eqref{en value} and \eqref{ss1}, we have 
			\begin{align} \label{T5}
				\|\eta_{n}\|&=\|w_{n}-y_{n}-\beta \lambda_{n}(\mathcal{F}w_{n}-\mathcal{F}y_{n})\| \notag \\
				&\leq \|w_{n}-y_{n}\|+\beta \lambda_{n}\|\mathcal{F}w_{n}-\mathcal{F}y_{n}\| \notag \\
				& \leq \|w_{n}-y_{n}\|+\beta \mu \delta_{n}\frac{\lambda_{n}}{\lambda_{n+1}}\|w_{n}-y_{n}\| \notag \\
				& \leq (1+\beta \mu \delta_{n}\frac{\lambda_{n}}{\lambda_{n+1}})\|w_{n}-y_{n}\|.
			\end{align}
			From the Assumption (A7) and Remark 4.1(1), we have $\lim \limits_{n \to \infty}\frac{\delta_{n}\lambda_{n}}{\lambda_{n+1}}=1$. Thus, there exists a constant $n_{0} \in \mathbb{N}$ such that
			$(1-\beta \mu \delta_{n}\frac{\lambda_{n}}{\lambda_{n+1}})>0$ for all $n \geq n_{0}.$ 
			Combining \eqref{TTs1} and \eqref{T5}, we get
			\begin{align} \label{T6}
				d_{n}\geq \frac{(1-\beta \mu \delta_{n}\frac{\lambda_{n}}{\lambda_{n+1}})}{(1+\beta \mu \delta_{n}\frac{\lambda_{n}}{\lambda_{n+1}})^2}>0, ~\text{for all}~ n \geq n_{0}.
			\end{align}
			Using the definition of $T_{n}$, and from the fact that $u_{n} \in T_{n}$, we get
			\begin{align*}
				\langle w_{n}-\beta \lambda_{n}\mathcal{F}w_{n}-y_{n},u_{n}-y_{n}\rangle \leq 0.
			\end{align*}
			It implies
			\begin{align} \label{T8}
				\langle w_{n}-y_{n}-\beta \lambda_{n}(\mathcal{F}w_{n}-\mathcal{F}y_{n}),u_{n}-y_{n}\rangle \leq \beta \lambda_{n} \langle \mathcal{F}y_{n},u_{n}-y_{n}\rangle.
			\end{align}
			From \eqref{eqm} and \eqref{T6}, we have
			\begin{align}\label{T7}
				-2\sigma \lambda_{n} d_{n} \langle \mathcal{F}y_{n},u_{n}-p^{*}\rangle &\leq -2\sigma \lambda_{n}d_{n}\langle \mathcal{F}y_{n},u_{n}-y_{n} \rangle.
			\end{align}
			Now from \eqref{dn value}, \eqref{en value}, \eqref{T7} and \eqref{T8}, we have
			\begin{align} \label{T9}
				-2\sigma \lambda_{n}d_{n}\langle \mathcal{F}y_{n},u_{n}-p^{*} \rangle &\leq -2\frac{\sigma}{\beta}d_{n}\langle \eta_{n}, u_{n}-y_{n} \rangle \notag \\
				&=-2\frac{\sigma}{\beta}d_{n} \langle \eta_{n}, w_{n}-y_{n}-w_{n}+u_{n}\rangle \notag \\
				&=-2\frac{\sigma}{\beta} d_{n}\langle \eta_{n}, w_{n}-y_{n}\rangle-2\frac{\sigma}{\beta} d_{n}\langle \eta_{n},-w_{n}+u_{n}\rangle \notag \\
				&=-2\frac{\sigma}{\beta} d_{n}\langle \eta_{n}, w_{n}-y_{n} \rangle+2\frac{\sigma}{\beta} d_{n}\langle \eta_{n},w_{n}-u_{n} \rangle \notag \\
				&=-2\frac{\sigma}{\beta}d_{n}^2\|\eta_{n}\|^2+2\frac{\sigma}{\beta}d_{n}\langle \eta_{n},w_{n}-u_{n} \rangle.
			\end{align}
			Using Lemma \ref{lem2.1}, we have
			\begin{align}\label{T10}
				2\frac{\sigma}{\beta}d_{n}\langle \eta_{n}, w_{n}-u_{n}\rangle=\|w_{n}-u_{n}\|^2+\frac{\sigma^2}{\beta^2}d_{n}^2\|\eta_{n}\|^2-\|w_{n}-u_{n}-\frac{\sigma}{\beta}d_{n}\eta_{n}\|^2.
			\end{align}
			From \eqref{T3} and \eqref{T5}, we get
			\begin{align} \label{T11}
				d_{n}^2\|\eta_{n}\|^2 &\geq d_{n}(1-\beta \mu \delta_{n}\frac{\lambda_{n}}{\lambda_{n+1}})\|w_{n}-y_{n}\|^2 \notag \\
				&\geq (1-\beta \mu \delta_{n}\frac{\lambda_{n}}{\lambda_{n+1}})^2\frac{\|w_{n}-y_{n}\|^4}{\|\eta_{n}\|^2} \notag \\
				&\geq \frac{(1-\beta \mu \delta_{n}\frac{\lambda_{n}}{\lambda_{n+1}})^2}{(1+\beta \mu \delta_{n}\frac{\lambda_{n}}{\lambda_{n+1}})^2}\|w_{n}-y_{n}\|^2.
			\end{align}
			Using \eqref{T9}, \eqref{T10} and \eqref{T11} in \eqref{T1}, we get
			\begin{align} \label{T55}
				\|u_{n}-p^{*}\|^2 &\leq \|w_{n}-p^{*}\|^2-\|w_{n}-u_{n}-\frac{\sigma}{\beta}d_{n}\eta_{n}\|^2\notag \\
				&-\frac{\sigma}{\beta^2}(2\beta-\sigma)\frac{(1-\beta \mu \delta_{n}\frac{\lambda_{n}}{\lambda_{n+1}})^2}{(1+\beta \mu \delta_{n}\frac{\lambda_{n}}{\lambda_{n+1}})^2}\|w_{n}-y_{n}\|^2, ~\text{for all}~n \geq n_{0}.
			\end{align}
			As $2\beta-\sigma >0$, we get
			\begin{align} \label{imp1}
				\|u_{n}-p^{*}\| \leq \|w_{n}-p^{*}\|~\text{for all}~n \geq n_{0}.
			\end{align}
			Using \eqref{imp1}, we have
			\begin{align} \label{T31}
				\|x_{n+1}-p^{*}\|^2&=\|(1-\alpha_{n})v_{n}+\alpha_{n}u_{n}-p^{*}\|^2 \notag \\
				&=\|(1-\alpha_{n})(v_{n}-p^{*})+\alpha_{n}(u_{n}-p^{*})\|^2 \notag \\
				&=(1-\alpha_{n})\|v_{n}-p^{*}\|^2+\alpha_{n}\|u_{n}-p^{*}\|^2-\alpha_{n}(1-\alpha_{n})\|v_{n}-u_{n}\|^2 \notag \\
				&\leq (1-\alpha_{n})\|v_{n}-p^{*}\|^2+\alpha_{n}\|w_{n}-p^{*}\|^2-\alpha_{n}(1-\alpha_{n})\|v_{n}-u_{n}\|^2, ~\text{for all}~n \geq n_{0}.
			\end{align}
			As
			\begin{align*}
				x_{n+1}=(1-\alpha_{n})v_{n}+\alpha_{n}u_{n},
			\end{align*}
			we get
			\begin{align}\label{T32}
				\|v_{n}-u_{n}\|=\frac{1}{\alpha_{n}}\|x_{n+1}-v_{n}\|,~\text{for all}~ n \geq 1.
			\end{align}
			Combining \eqref{T31} and \eqref{T32}, we get
			\begin{align} \label{T33}
				\|x_{n+1}-p^{*}\|^2 \leq (1-\alpha_{n})\|v_{n}-p^{*}\|^2+\alpha_{n}\|w_{n}-p^{*}\|^2-\frac{(1-\alpha_{n})}{\alpha_{n}}\|x_{n+1}-v_{n}\|^2,~\text{for all}~ n \geq n_{0}.
			\end{align}
			Using Lemma \ref{lem2.1}, we have
			\begin{align} \label{T34}
				\|w_{n}-p^{*}\|^2&=\|x_{n}+\nu_{n}(x_{n}-x_{n-1})-p^{*}\|^2 \notag \\
				&=\|(1+\nu_{n})(x_{n}-p^{*})-\nu_{n}(x_{n-1}-p^{*})\|^2 \notag \\
				&=(1+\nu_{n})\|x_{n}-p^{*}\|^2-\nu_{n}\|x_{n-1}-p^{*}\|^2 +\nu_{n}(1+\nu_{n})\|x_{n}-x_{n-1}\|^2.
			\end{align}
			Similarly
			\begin{align} \label{T35}
				\|v_{n}-p^{*}\|^2=(1+\xi_{n})\|x_{n}-p^{*}\|^2-\xi_{n} \|x_{n-1}-p^{*}\|^2+\xi_{n} (1+\xi_{n})\|x_{n}-x_{n-1}\|^2.
			\end{align}
			Furthermore
			\begin{align} \label{T36}
				\|x_{n+1}-v_{n}\|^2&=\|x_{n+1}-x_{n}-\xi_{n}(x_{n}-x_{n-1})\|^2 \notag \\
				&=\|x_{n+1}-x_{n}\|^2+\xi_{n}^2\|x_{n}-x_{n-1}\|^2-2\xi_{n} \langle x_{n+1}-x_{n},x_{n}-x_{n-1}\rangle \notag \\
				&\geq \|x_{n+1}-x_{n}\|^2+\xi_{n}^2\|x_{n}-x_{n-1}\|^2-2\xi_{n} \|x_{n+1}-x_{n}\|\|x_{n}-x_{n-1}\|\notag \\
				& \geq (1-\xi_{n})\|x_{n+1}-x_{n}\|^2+(\xi_{n}^2-\xi_{n})\|x_{n}-x_{n-1}\|^2.
			\end{align}
			Substituting \eqref{T34}, \eqref{T35} and \eqref{T36} in \eqref{T33}, we get
			\begin{align} \label{T44}
				\|x_{n+1}-p^{*}\|^2 &\leq (1-\alpha_{n})\big[ (1+\xi_{n})\|x_{n}-p^{*}\|^2-\xi_{n}\|x_{n-1}-p^{*}\|^2+\xi_{n}(1+\xi_{n})\|x_{n}-x_{n-1}\|^2\big] \notag \\
				&+\alpha_{n}\big[(1+\nu_{n})\|x_{n}-p^{*}\|^2-\nu_{n}\|x_{n-1}-p^{*}\|^2+\nu_{n}(1+\nu_{n})\|x_{n}-x_{n-1}\|^2        \big] \notag \\
				&-\frac{(1-\alpha_{n})}{\alpha_{n}}\big[(1-\xi_{n})\|x_{n+1}-x_{n}\|^2+(\xi_{n}^2-\xi_{n})\|x_{n}-x_{n-1}\|^2 \big] \notag \\
				&=(1-\alpha_{n})(1+\xi_{n})\|x_{n}-p^{*}\|^2-\xi_{n} (1-\alpha_{n})\|x_{n-1}-p^{*}\|^2+\xi_{n} (1+\xi_{n})(1-\alpha_{n})\|x_{n}-x_{n-1}\|^2 \notag \\
				&+\alpha_{n}(1+\nu_{n})\|x_{n}-p^{*}\|^2-\alpha_{n}\nu_{n}\|x_{n-1}-p^{*}\|^2+\alpha_{n}\nu_{n}(1+\nu_{n})\|x_{n}-x_{n-1}\|^2 \notag \\
				&-\frac{(1-\alpha_{n})(1-\xi_{n})}{\alpha_{n}}\|x_{n+1}-x_{n}\|^2-\frac{(1-\alpha_{n})(\xi_{n}^2-\xi_{n})}{\alpha_{n}}\|x_{n}-x_{n-1}\|^2 \notag \\
				&=\big(1+\alpha_{n}\nu_{n}+\xi_{n} (1-\alpha_{n}) \big)\|x_{n}-p^{*}\|^2-(\alpha_{n}\nu_{n}+\xi_{n}(1-\alpha_{n}))\|x_{n-1}-p^{*}\|^2 \notag \\
				&-\kappa_{n}\|x_{n+1}-x_{n}\|^2+\pi_{n}\|x_{n}-x_{n-1}\|^2,
			\end{align}
			where 
			\begin{align*}
				\kappa_{n}=\frac{(1-\alpha_{n})(1-\xi_{n})}{\alpha_{n}}
			\end{align*}
			and
			\begin{align*}
				\pi_{n}=(1-\alpha_{n})\xi_{n} (1+\xi_{n})+\alpha_{n}\nu_{n}(1+\nu_{n})-\frac{(1-\alpha_{n})(\xi_{n}^2-\xi_{n})}{\alpha_{n}}.
			\end{align*}
			We define 
			\begin{align} \label{omeg1con}
				\Omega_{n}=\|x_{n}-p^{*}\|^2-(\alpha_{n}\nu_{n}+\xi_{n} (1-\alpha_{n}))\|x_{n-1}-p^{*}\|^2+\pi_{n}\|x_{n}-x_{n-1}\|^2,~n \geq 1.
			\end{align}
			Then
			\begin{align} \label{Teqn}
				\Omega_{n+1}-\Omega_{n}&=\|x_{n+1}-p^{*}\|^2-(\alpha_{n+1}\nu_{n+1}+\xi_{n+1} (1-\alpha_{n+1}))\|x_{n}-p^{*}\|^2+\pi_{n+1}\|x_{n+1}-x_{n}\|^2 \notag \\
				&-\|x_{n}-p^{*}\|^2+(\alpha_{n} \nu_{n}+\xi (1-\alpha_{n}))\|x_{n-1}-p^{*}\|^2-\pi_{n}\|x_{n}-x_{n-1}\|^2. \notag \\
				&\leq (\alpha_{n}\nu_{n}+\xi_{n}(1-\alpha_{n})-\alpha_{n+1}\nu_{n+1}-\xi_{n+1}(1-\alpha_{n+1}))\|x_{n}-p^{*}\|^2 \notag \\
				&-\kappa_{n}\|x_{n+1}-x_{n}\|^2+\pi_{n+1}\|x_{n+1}-x_{n}\|^2 \notag \\
				&=(\alpha_{n}(\nu_{n}-\xi_{n})-(\nu_{n+1}-\xi_{n+1})\alpha_{n+1}-(\xi_{n+1}-\xi_{n}))\|x_{n}-p^{*}\|^2\notag \\
                &-\kappa_{n}\|x_{n+1}-x_{n}\|^2+\pi_{n+1}\|x_{n+1}-x_{n}\|^2.
			\end{align}
            By $0 \leq \xi_{n} \leq \xi_{n+1}<\nu_{1}\leq \nu_{n}\leq \nu_{n+1}$ and $0 <\alpha_{n} \leq \alpha_{n+1}<1$, we have $-\alpha_{n+1}(\nu_{n+1}-\xi_{n+1})\leq -\alpha_{n}(\nu_{n+1}-\xi_{n+1})$ and $-(\xi_{n+1}-\xi_{n}) \leq -\alpha_{n}(\xi_{n+1}-\xi_{n})$. Therefore, we get
            \begin{align*}
                \alpha_{n}(\nu_{n}-\xi_{n})-\alpha_{n+1}(\nu_{n+1}-\xi_{n+1})-(\xi_{n+1}-\xi_{n}) &\leq\alpha_{n}(\nu_{n}-\xi_{n})-\alpha_{n}(\nu_{n+1}-\xi_{n+1})-\alpha_{n}(\xi_{n+1}-\xi_{n})\\
                &=\alpha_{n}(\nu_{n}-\nu_{n+1})\\
                &\leq 0.
            \end{align*}
 Thus, from \eqref{Teqn}, we have
			\begin{align} \label{Teqn1}
				\Omega_{n+1}-\Omega_{n} &\leq -\kappa_{n}\|x_{n+1}-x_{n}\|^2+\pi_{n+1}\|x_{n+1}-x_{n}\|^2 \notag \\
				&=-(\kappa_{n}-\pi_{n+1})\|x_{n+1}-x_{n}\|^2.
			\end{align}
			Consider
			\begin{align} \label{Teqn3}
				\kappa_{n}-\pi_{n+1}&=\frac{(1-\xi_{n})(1-\alpha_{n})}{\alpha_{n}}-(1-\alpha_{n+1})\xi_{n+1} (1+\xi_{n+1})-\alpha_{n+1}\nu_{n+1}(1+\nu_{n+1})+\frac{(1-\alpha_{n+1})(\xi_{n+1}^2-\xi_{n+1})}{\alpha_{n+1}} \notag \\
				&\geq \frac{(1-\xi_{n})(1-\alpha_{n+1})}{\alpha_{n+1}}+\frac{(1-\alpha_{n+1})(\xi_{n+1}^2-\xi_{n+1})}{\alpha_{n+1}}-(1-\alpha_{n+1})\xi_{n+1} (1+\xi_{n+1})-\alpha_{n+1}\nu_{n+1}(1+\nu_{n+1})\notag\\
                &\geq \frac{(1-\alpha_{n+1})(\xi_{n+1}^2-\xi_{n+1}-\xi_{n}+1)}{\alpha_{n+1}}-2(1-\alpha_{n+1})-2\alpha_{n+1}\notag \\
                &\geq \frac{(1-\alpha_{n+1})(\xi_{n+1}^2-2\xi_{n+1}-1)}{\alpha_{n+1}}-2\notag \\
                &\geq \bar{\theta}(\xi_{n+1}^2-2\xi_{n+1}+1)\notag \\
				&=\bar{\theta}\xi^2-2\bar{\theta}\xi +\bar{\theta}-2.
			\end{align}
			From Assumption (A5), we have $\xi_{n+1}\leq \xi < \frac{\bar{\theta}-\sqrt{2\bar{\theta}}}{\bar{\theta}}$, which implies
            \begin{align*}
                \bar{\theta}\xi_{n+1}^2-2\bar{\theta}\xi_{n+1} +\bar{\theta}-2\geq \bar{\theta}\xi^2-2\bar{\theta}\xi +\bar{\theta}-2>0.
            \end{align*} 
			Thus, from \eqref{Teqn1} and \eqref{Teqn3}, we have
			\begin{align} \label{Teqn4}
				\Omega_{n+1}-\Omega_{n} \leq -r\|x_{n+1}-x_{n}\|^2,
			\end{align}
			where $r=\bar{\theta} \xi^2-2\bar{\theta} \xi +\bar{\theta}-2$. Therefore, the sequence $\{\Omega_{n}\}$ is non-increasing. From \eqref{omeg1con}, we have
			\begin{align*}
				\Omega_{n}&=\|x_{n}-p^{*}\|^2-(\alpha_{n}\nu_{n}+\xi_{n} (1-\alpha_{n}))\|x_{n-1}-p^{*}\|^2+\pi_{n}\|x_{n}-x_{n-1}\|^2 \notag \\
				&\geq \|x_{n}-p^{*}\|^2-(\alpha_{n}\nu_{n}+\xi_{n}(1-\alpha_{n}))\|x_{n-1}-p^{*}\|^2.
			\end{align*}
			This implies
			\begin{align*}
				\|x_{n}-p^{*}\|^2 &\leq (\alpha_{n}\nu_{n}+\xi_{n} (1-\alpha_{n}))\|x_{n-1}-p^{*}\|^2+\Omega_{n} \notag \\
				&\leq (\frac{1}{1+\bar{\theta}}+\xi (1-\alpha))\|x_{n-1}-p^{*}\|^2+\Omega_{n} \notag \\
				&=\varepsilon \|x_{n-1}-p^{*}\|^2+\Omega_{n} \notag \\
				&\vdots \notag \\
				&\leq \varepsilon^{n}\|x_{0}-p^{*}\|^2+(1+\varepsilon+\varepsilon^2+...+\varepsilon^{n-1})\Omega_{1} \notag \\
				&\leq \varepsilon^{n}\|x_{0}-p^{*}\|^2+\frac{1}{1-\varepsilon}\Omega_{1},
			\end{align*}
			where $\varepsilon=\frac{1}{1+\bar{\theta}}+\xi (1-\alpha) <1$. So, the sequence $\{\|x_{n}-p^{*}\|\}$ is bounded and hence $\{x_{n}\}$ is bounded. \\
			\textbf{(ii):}	Now, we show that $\lim \limits_{n \to \infty}\|x_{n+1}-x_{n}\|=0$. Consider
			\begin{align*}
				\Omega_{n+1}&=\|x_{n+1}-p^{*}\|^2-(\alpha_{n+1}\nu_{n+1}+\xi_{n+1} (1-\alpha_{n+1}))\|x_{n}-p^{*}\|^2+\pi_{n+1}\|x_{n+1}-x_{n}\|^2 \notag \\
				&\geq -(\alpha_{n+1}\nu_{n+1}+\xi_{n+1} (1-\alpha_{n+1}))\|x_{n}-p^{*}\|^2.
			\end{align*}
			Thus, we have
			\begin{align*}
				-\Omega_{n+1}&\leq (\alpha_{n+1}\nu_{n+1}+\xi_{n+1} (1-\alpha_{n+1}))\|x_{n}-p^{*}\|^2 \notag \\
				& \leq \varepsilon \|x_{n}-p^{*}\|^2 \notag \\
				& \leq \varepsilon^{n+1}\|x_{0}-p^{*}\|^2 +\frac{\varepsilon}{1-\varepsilon}\Omega_{1}.
			\end{align*}
			Also, from \eqref{Teqn4}, we have
			\begin{align*}
				r\|x_{n+1}-x_{n}\|^2 &\leq \Omega_{n}-\Omega_{n+1}.
			\end{align*}
			It implies
			\begin{align*}
				r \sum \limits_{n=1}^{t}\|x_{n+1}-x_{n}\|^2 &\leq \sum \limits_{n=1}^{t}(\Omega_{n}-\Omega_{n+1}) \notag \\
				&\leq \Omega_{1}-\Omega_{t+1} \notag \\
				&\leq \Omega_{1}+\varepsilon^{t+1}\|x_{0}-p^{*}\|^2+\frac{\varepsilon \Omega_{1}}{1-\varepsilon} \notag \\
				&=\varepsilon^{t+1}\|x_{0}-p^{*}\|^2+\frac{\Omega_{1}}{1-\varepsilon}.
			\end{align*}
			Therefore $\sum \limits_{n=1}^{\infty}\|x_{n+1}-x_{n}\|^2 \leq \frac{\Omega_{1}}{r(1-\varepsilon)}<+\infty$
			and hence $\lim \limits_{n \to \infty}\|x_{n+1}-x_{n}\|=0$.\\
			\textbf{(iii):}	Next, we have to show that $\lim \limits_{n \to \infty}\|x_{n}-p^{*}\|$ exists. From \eqref{T44}, we have
			\begin{align*}
				\|x_{n+1}-p^{*}\|^2 &\leq (1+\alpha_{n}\nu_{n}+\xi_{n} (1-\alpha_{n}))\|x_{n}-p^{*}\|^2-(\alpha_{n}\nu_{n}+\xi_{n}(1-\alpha_{n}))\|x_{n-1}-p^{*}\|^2 \notag \\
				&-\kappa_{n}\|x_{n+1}-x_{n}\|^2+\pi_{n}\|x_{n}-x_{n-1}\|^2 \notag \\
				&\leq \|x_{n}-p^{*}\|^2+(\alpha_{n}\nu_{n}+\xi_{n}(1-\alpha_{n}))\|x_{n}-p^{*}\|^2-(\alpha_{n}\nu_{n}+\xi_{n} (1-\alpha_{n}))\|x_{n-1}-p^{*}\|^2 \\
                &+\pi_{n}\|x_{n}-x_{n-1}\|^2 \notag \\
				&=\|x_{n}-p^{*}\|^2+(\alpha_{n}\nu_{n}+\xi_{n} (1-\alpha_{n}))(\|x_{n}-p^{*}\|^2-\|x_{n-1}-p^{*}\|^2)+\pi_{n}\|x_{n}-x_{n-1}\|^2.
			\end{align*}
			As
			\begin{align*}
				\pi_{n}&=(1-\alpha_{n})\xi_{n} (1+\xi_{n})+\alpha_{n}\nu_{n}(1+\nu_{n})-\frac{(1-\alpha_{n})(\xi_{n}^2-\xi_{n})}{\alpha_{n}} \notag \\
				&\leq (1-\alpha)\xi (1+\xi)+\frac{2}{1+\bar{\theta}}+\frac{(1-\alpha)}{4\alpha}.
			\end{align*}
			Thus, applying Lemma \ref{lem2.2}, we have $ \lim \limits_{n \to \infty}\|x_{n}-p^{*}\|$ exists.\\
			\textbf{(iv):}	Finally, we show that $\lim \limits_{n \to \infty}\|x_{n}-w_{n}\|=0.$ Consider
			\begin{align*}
				\|x_{n+1}-v_{n}\| \leq \|x_{n+1}-x_{n}\|+\xi_{n} \|x_{n}-x_{n-1}\| \to 0~\text{as}~n \to \infty.
			\end{align*}
			Again on similar lines
			\begin{align*}
				\|x_{n+1}-w_{n}\|\leq \|x_{n+1}-x_{n}\|+\nu_{n}\|x_{n}-x_{n-1}\|\to 0~\text{as}~ n \to \infty.
			\end{align*}
			Also
			\begin{align*}
				\|x_{n+1}-v_{n}\|&=\|(1-\alpha_{n})v_{n}+\alpha_{n}u_{n}-v_{n}\| \notag \\
				&=\alpha_{n}\|v_{n}-u_{n}\|\geq \alpha \|v_{n}-u_{n}\|,
			\end{align*}
			implies 
			\begin{align*}
				\lim \limits_{n \to \infty}\|v_{n}-u_{n}\|=0.
			\end{align*}
			Furthermore
			\begin{align*}
				\|w_{n}-v_{n}\|&=\|x_{n}+\nu_{n}(x_{n}-x_{n-1})-x_{n}-\xi_{n}(x_{n}-x_{n-1})\| \notag \\
				&\leq \nu_{n}\|x_{n}-x_{n-1}\|+\xi_{n} \|x_{n}-x_{n-1}\| \notag \\
				& \leq \|x_{n}-x_{n-1}\|+\alpha \|x_{n}-x_{n-1}\| \to 0~\text{as}~n \to \infty.
			\end{align*}
			This implies $\lim \limits_{n \to \infty}\|w_{n}-v_{n}\|=0.$
			Also, it is easy to see that
			$\lim \limits_{n \to \infty}\|w_{n}-u_{n}\|=0.$
			Now, from \eqref{T55}, we have
			\begin{align*}
				\frac{\sigma}{\beta^2}(2\beta-\sigma)\frac{(1-\beta\mu \delta_{n}\frac{\lambda_{n}}{\lambda_{n+1}})^2}{1+\beta\mu \delta_{n}\frac{\lambda_{n}}{\lambda_{n+1}})^2}\|w_{n}-y_{n}\|^2
				&+\|w_{n}-u_{n}-\frac{\sigma}{\beta}d_{n}\eta_{n}\|^2 \notag \\
				&\leq \|w_{n}-p^{*}\|^2-\|u_{n}-p^{*}\|^2 \notag \\
				&\leq (\|w_{n}-p^{*}\|+\|u_{n}-p^{*}\|)\|w_{n}-u_{n}\|.
			\end{align*}
			Therefore, we have $\lim \limits_{n \to \infty}\|w_{n}-y_{n}\|=0$ and $\lim \limits_{n \to \infty}\|w_{n}-u_{n}-\frac{\sigma}{\beta}d_{n}\eta_{n}\|=0.$ Also, it can be easily shown that $\lim \limits_{n \to \infty}\|x_{n}-v_{n}\|=0$, $\lim \limits_{n \to \infty}\|x_{n}-w_{n}\|=0$ and $\lim \limits_{n \to \infty}\|x_{n}-y_{n}\|=0$.
		\end{proof}
		Now, we prove our main theorem.
		\begin{theorem} \label{theorem1a}
			Assume that the Assumptions (A1)-(A7) hold and $VI(C,\mathcal{F})$ is a finite set. Then the sequence $\{x_{n}\}$ generated by Algorithm 4.1 converges weakly to an element in $VI(C,\mathcal{F})$.
		\end{theorem}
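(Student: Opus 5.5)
We have to handle that the Fejér-type property (Lemma \ref{lem4.2a}(iii)) only holds for points of $S_{\mathcal{F}}$, while weak cluster points are only known (via Lemma \ref{Lem4.1a}) to lie in $VI(C,\mathcal{F})$, which may be strictly larger than $S_{\mathcal{F}}$ since $\mathcal{F}$ is non-monotone. Hence Opial's lemma (Lemma \ref{lem2.3}) cannot be applied directly with the set $S_{\mathcal{F}}$. The plan is instead to use finiteness of $VI(C,\mathcal{F})$ together with asymptotic regularity $\|x_{n+1}-x_n\|\to 0$ to show the weak cluster set is a singleton.

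First, by Lemma \ref{lem4.2a}(i), $\{x_n\}$ is bounded, so its set $\omega_w(x_n)$ of weak cluster points is nonempty. Take any $v^*\in\omega_w(x_n)$ with $x_{n_j}\rightharpoonup v^*$. By Lemma \ref{lem4.2a}(iv), $\|x_n-w_n\|\to 0$, so $w_{n_j}\rightharpoonup v^*$; the proof of Lemma \ref{lem4.2a}(iv) also gives $\|w_n-y_n\|\to 0$. Lemma \ref{Lem4.1a} then yields $v^*\in VI(C,\mathcal{F})$. Hence $\omega_w(x_n)\subset VI(C,\mathcal{F})$, which is finite; write $\omega_w(x_n)=\{q_1,\dots,q_m\}$ with distinct $q_i$.

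Next I will show $m=1$. Suppose $m\ge 2$. Choose $\varepsilon>0$ so small that the sets $U_i=\{x: |\langle x-q_i,e_k\rangle|<\varepsilon,\ k=1,\dots,K\}$ (weak neighbourhoods built from finitely many unit vectors $e_k$ separating the $q_i$, e.g.\ $e_{ik}=(q_i-q_k)/\|q_i-q_k\|$) are pairwise disjoint with a positive "gap": for $x\in U_i$, $y\in U_k$, $i\ne k$, one has $\|x-y\|\ge\langle x-y,e_{ik}\rangle\ge \|q_i-q_k\|-2\varepsilon\ge c>0$. Since $\{x_n\}$ is bounded, a standard compactness argument shows that all but finitely many $x_n$ lie in $\bigcup_i U_i$ (otherwise a subsequence outside this union would have a weak cluster point in $\omega_w$, and the $U_i$ are weakly open, a contradiction). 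Because infinitely many $x_n$ lie in $U_1$ and infinitely many in some $U_k$, $k\neq 1$, there are infinitely many indices with $x_n\in U_i$, $x_{n+1}\in U_k$, $i\ne k$, forcing $\|x_{n+1}-x_n\|\ge c$, contradicting Lemma \ref{lem4.2a}(ii). So $\omega_w(x_n)=\{q\}$, and boundedness then gives $x_n\rightharpoonup q\in VI(C,\mathcal{F})$.

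The main obstacle is the separation step: making sure the weak neighbourhoods are chosen so that membership of consecutive iterates in different ones really yields a uniform norm gap, and justifying that eventually every iterate lies in one of them (this uses boundedness and weak sequential compactness, applied to a hypothetical subsequence avoiding all $U_i$). Using the finitely many separating directions $e_{ik}$ as above makes the gap estimate a one-line inequality, so I expect this to go through; note also that Lemma \ref{lem4.2a}(iii) is not needed here, which is exactly why the finiteness hypothesis is imposed.
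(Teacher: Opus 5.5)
Your proof is correct and follows essentially the same route as the paper: Lemma~\ref{Lem4.1a} puts every weak cluster point in the finite set $VI(C,\mathcal{F})$, and then asymptotic regularity $\|x_{n+1}-x_n\|\to 0$, together with weakly open separating sets that have a uniform norm gap, rules out more than one cluster point. The only difference is that you prove the separation step yourself with slab neighbourhoods, whereas the paper cites Liu--Yang's Lemma 3.5 for the shifted bisector half-space sets $B^{i}$ and reaches the same contradiction $2p<\|x_{N_1}-x_{N_1+1}\|<p$.
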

		\begin{proof}
			From Lemma \ref{lem4.2a}, we see that the sequence $\{x_{n}\}$ is bounded. Thus, we can choose a subsequence $\{x_{n_{j}}\}$ of $\{x_{n}\}$ such that $x_{n_{j}} \rightharpoonup v^{*}$ as $j \to \infty$. Again from Lemma \ref{lem4.2a}, we see that $\lim \limits_{n \to \infty}\|x_{n}-w_{n}\|=0$ and $\lim \limits_{n \to \infty}\|w_{n}-y_{n}\|$. From Lemma \ref{Lem4.1a}, it implies that every weak cluster point of $\{x_{n}\}$ is in $VI(C,\mathcal{F})$. From the assumption that $VI(C,\mathcal{F})$ is finite, we obtain a sequence $\{x_{n}\}$ has finite cluster points in $VI(C,\mathcal{F})$. As $\lim \limits_{n \to \infty}\|x_{n+1}-x_{n}\|=0$. It implies that there exists $N \in \mathbb{N}$ such that
			\begin{align*}
				\|x_{n+1}-x_{n}\|<p,~\text{for all}~n \geq N.
			\end{align*}
			Assume that $\{x_{n}\}$ has more than one weak cluster points in $VI(C,\mathcal{F})$. Then from \cite[Lemma 3.5]{liu}, there exists $N_{1}>N$ such that $x_{N_{1}} \in B^{i}$ and $x_{N_{1}+1} \in B^{j}$, where $i \neq j$, $i,j \in \{1,2,...,m\}$, $ m \geq 2$ and
			\begin{align*}
				B^{i}=\bigcap\limits_{j=1,j\neq i}^{m}\{v:\langle v,\frac{v^{i}-v^{j}}{\|v^{i}-v^{j}\|} \rangle >\frac{\|v^{i}\|^2-\|v^{j}\|^2}{2\|v^{i}-v^{j}\|}+p\},
			\end{align*}
			where $p\coloneqq \min_{\substack{i,j\in \{1,2,...,m\}}_{i\neq j}}\{\frac{\|v^{i}-v^{j}\|}{4}\},$ and $v^{1},v^{2},...,v^{m}$ are the finite weak cluster points of $\{x_{n}\}$. In particular, we have
			\begin{align} \label{31}
				\|x_{N_{1}+1}-x_{N_{1}}\|<p.
			\end{align}
			Now 
			\begin{align*}
				x_{N_{1}} \in B^{i}=\bigcap\limits_{j=1,j\neq i}^{m}\{v: \langle v,\frac{v^{i}-v^{j}}{\|v^{i}-v^{j}\|}\rangle>p+\frac{\|v^{i}\|^2-\|v^{j}\|^2}{2\|v^{i}-v^{j}\|}\},  
			\end{align*}
			and 
			\begin{align*}
				x_{N_{1}+1} \in B^{j}&=\bigcap\limits_{i=1,i\neq j}^{m}\{v:\langle v,\frac{v^{j}-v^{i}}{\|v^{j}-v^{i}\|}\rangle>p+\frac{\|v^{j}\|^2-\|v^{i}\|^2}{2\|v^{j}-v^{i}\|}\}\\
				&=\bigcap\limits_{i=1,i\neq j}^{m}\{v:\langle -v,\frac{v^{j}-v^{i}}{\|v^{j}-v^{i}\|}\rangle<-p+\frac{\|v^{i}\|^2-\|v^{j}\|^2}{2\|v^{i}-v^{j}\|}\}\\
				&=\bigcap\limits_{i=1,i\neq j}^{m}\{v:\langle -v,\frac{v^{i}-v^{j}}{\|v^{j}-v^{i}\|}\rangle>p+\frac{\|v^{j}\|^2-\|v^{i}\|^2}{2\|v^{i}-v^{j}\|}\}.\\
			\end{align*}
			Thus, we get
			\begin{align} \label{psi1}
				\langle x_{N_{1}},\frac{v^{i}-v^{j}}{\|v^{i}-v^{j}\|}\rangle>p+\frac{\|v^{i}\|^2-\|v^{j}\|^2}{2\|v^{i}-v^{j}\|}
			\end{align}
			and
			\begin{align} \label{psi2}
				\langle -x_{N_{1}+1},\frac{v^{i}-v^{j}}{\|v^{j}-v^{i}\|}\rangle>p+\frac{\|v^{j}\|^2-\|v^{i}\|^2}{2\|v^{i}-v^{j}\|}.
			\end{align}
			Adding \eqref{psi1}, \eqref{psi2} and using \eqref{31}, we get
			\begin{align*}
				2p < \langle x_{N_{1}}-x_{N_{1}+1},\frac{v^{i}-v^{j}}{\|v^{i}-v^{j}\|} \rangle \leq \|x_{N_{1}}-x_{N_{1}+1}\|<p.
			\end{align*}
			This is a contradiction. Hence, the sequence $\{x_{n}\}$ has only one weak cluster point $v^{*} \in VI(C,\mathcal{F})$. Thus, we see that $\{x_{n}\}$ converges weakly to an element of   $VI(C,\mathcal{F})$.
		\end{proof}
		Using our Algorithm 4.1 (MDISEM), we now give a weak convergence result involving $\mathcal{F}$ as quasi-monotone operator. Instead of Lemma \ref{Lem4.1a}, we use the following lemma which is as follows:
		\begin{lemma} \label{use3ab}
			Assume that Assumptions (A1)-(A2),(A4)-(A7) hold and $\mathcal{F}$ is quasi-monotone on $H$. Let $v^{*}$ be one of the weak cluster points of the subsequence $\{w_{n_{j}}\}$ of $\{w_{n}\}$. If $\lim \limits_{j \to \infty}\|w_{n_{j}}-y_{n_{j}}\|=0$, then $v^{*} \in S_{\mathcal{F}}$ or $\mathcal{F}v^{*}=0$.
		\end{lemma}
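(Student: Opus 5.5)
We follow the proof of Lemma \ref{Lem4.1a} up to \eqref{use1a}, and then replace the use of Assumption (A3) by a quasi-monotonicity argument. As before, $w_{n_j}\rightharpoonup v^{*}$ and $\|w_{n_j}-y_{n_j}\|\to 0$ give $y_{n_j}\rightharpoonup v^{*}$. Since $\{y_n\}\subset C$ and $C$ is weakly closed, $v^{*}\in C$. The projection characterization of $y_{n_j}$, together with the boundedness of $\{w_{n_j}\}$, $\{y_{n_j}\}$, $\{\mathcal F w_{n_j}\}$ and the positive lower bound on $\lambda_n$ (Remark 4.1), yields \eqref{newlem2a}. Then the decomposition \eqref{newlem3a} and the Lipschitz continuity of $\mathcal F$ yield
\[
\liminf_{j\to\infty}\langle \mathcal{F}y_{n_j},x-y_{n_j}\rangle\ge 0\quad\text{for all } x\in C.
\]
None of this uses (A3).

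We then split into two cases according to $\liminf_{j}\|\mathcal F y_{n_j}\|$.

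\textbf{Case 1:} $\liminf_j\|\mathcal F y_{n_j}\|=0$. Pass to a further subsequence with $\mathcal F y_{n_j}\to 0$ strongly. By the sequential weak continuity of $\mathcal F$ on $C$ in (A1), $\mathcal F y_{n_j}\rightharpoonup \mathcal F v^{*}$. Weak lower semicontinuity of the norm then gives $\|\mathcal F v^{*}\|\le\liminf\|\mathcal F y_{n_j}\|=0$, so $\mathcal F v^{*}=0$.

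\textbf{Case 2:} $\liminf_j\|\mathcal F y_{n_j}\|>0$. Then there are $j_0$ and $T>0$ with $\|\mathcal F y_{n_j}\|\ge T$ for all $j\ge j_0$. Fix $x\in C$. Choose a positive sequence $s_j\downarrow 0$ with $\langle \mathcal F y_{n_j},x-y_{n_j}\rangle+s_j>0$; this is possible by the liminf inequality, for instance
\[
s_j=\bigl|\min\{0,\inf_{k\ge j}\langle \mathcal F y_{n_k},x-y_{n_k}\rangle\}\bigr|+1/j .
\]
Set $z_j=\mathcal F y_{n_j}/\|\mathcal F y_{n_j}\|^2$, so that $\langle \mathcal F y_{n_j},z_j\rangle=1$ and $\|z_j\|\le 1/T$. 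Then
\[
\langle \mathcal F y_{n_j},\,x+s_j z_j-y_{n_j}\rangle>0 ,
\]
and quasi-monotonicity of $\mathcal F$ on $H$ gives
\[
\langle \mathcal F(x+s_j z_j),\,x+s_j z_j-y_{n_j}\rangle\ge 0 .
\]
This is why quasi-monotonicity is assumed on all of $H$: the point $x+s_jz_j$ need not lie in $C$. Finally, let $j\to\infty$. Since $s_jz_j\to 0$ strongly, Lipschitz continuity gives $\mathcal F(x+s_jz_j)\to\mathcal F x$ strongly, and $y_{n_j}\rightharpoonup v^{*}$ weakly. A strong-times-weak limit of inner products passes to the limit, so $\langle \mathcal F x,x-v^{*}\rangle\ge 0$. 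Since $x\in C$ was arbitrary, $v^{*}\in S_{\mathcal F}$.

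The main obstacle is Case 2. We need the perturbation $s_jz_j$ to be uniformly controlled, which is exactly what the positive lower bound on $\|\mathcal F y_{n_j}\|$ provides. We also need the limit in the quasi-monotone inequality to pair a strongly convergent sequence with a weakly convergent one. A further point of care is that the liminf inequality only gives $s_j\to 0$ through the chosen subsequence, so the construction of $s_j$ must be made explicitly, as above, for each fixed $x$.
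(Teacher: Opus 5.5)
Your proof is correct and follows the paper's argument. Sequential weak continuity handles the case $\mathcal{F}v^{*}=0$, and otherwise you use the same perturbation $x+s_j\mathcal{F}y_{n_j}/\|\mathcal{F}y_{n_j}\|^2$ with quasi-monotonicity on $H$ and Lipschitz continuity. Your differences are small improvements: splitting on $\liminf_j\|\mathcal{F}y_{n_j}\|$ rather than $\limsup$ gives the uniform bound $\|\mathcal{F}y_{n_j}\|\ge T$ (hence $\mathcal{F}y_{n_j}\neq 0$) without the paper's implicit passage to a subsequence, and your $s_j$, which needs only $\liminf_j\langle\mathcal{F}y_{n_j},x-y_{n_j}\rangle\ge 0$, merges the paper's Cases 2a and 2b into one argument.
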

		\begin{proof}
			We see that $w_{n_{j}} \rightharpoonup v^{*}$ and $\lim \limits_{j \to \infty}\|w_{n_{j}}-y_{n_{j}}\|=0$. It implies that $y_{n_{j}} \rightharpoonup v^{*}$ and since $y_{n} \in C$, we have $v^{*} \in C$. We now divide the proof into the following two cases.\par
			\textbf{Case 1:} If $\limsup \limits_{j \to \infty}\|\mathcal{F}y_{n_{j}}\|=0$, then we have $\lim \limits_{j \to \infty}\|\mathcal{F}y_{n_{j}}\|=\liminf \limits_{j \to \infty}\|\mathcal{F}y_{n_{j}}\|=0$. Then, from Assumption $(A1)$, we see that
			\begin{align*}
				0 < \|\mathcal{F}v^{*}\| \leq \liminf \limits_{j \to \infty}\|\mathcal{F}y_{n_{j}}\|=0.
			\end{align*}
			This means that $\mathcal{F}v^{*}=0$. \par
			\textbf{Case 2:} If $\limsup \limits_{j \to \infty}\|\mathcal{F}y_{n_{j}}\|>0$. Then, without loss of generality, we can assume that 
			$\limsup \limits_{j \to \infty}\|\mathcal{F}y_{n_{j}}\|=T_{1}>0$. Thus, we can find $j_{0}\geq 1$ such that $\|\mathcal{F}y_{n_{j}}\|>\frac{T_{1}}{2}$, for all $j \geq j_{0}$. We have
			\begin{align*}
				\langle w_{n_{j}}-\beta \lambda_{n_{j}}\mathcal{F}w_{n_{j}}-y_{n_{j}},x-y_{n_{j}}\rangle \leq 0~ \text{for all}~ x  \in C,
			\end{align*}
			which is further equivalent to
			\begin{align*}
				\frac{1}{\beta \lambda_{n_{j}}} \langle w_{n_{j}}-y_{n_{j}},x-y_{n_{j}} \rangle \leq \langle \mathcal{F}w_{n_{j}},x-y_{n_{j}} \rangle~ \text{for all}~ x  \in C.
			\end{align*}
			Thus, we have
			\begin{align} \label{newlem1}
				\frac{1}{\beta \lambda_{n_{j}}} \langle w_{n_{j}}-y_{n_{j}},x-y_{n_{j}}\rangle+\langle \mathcal{F}w_{n_{j}},y_{n_{j}}-w_{n_{j}} \rangle \leq \langle \mathcal{F}w_{n_{j}},x-w_{n_{j}} \rangle~ \text{for all}~ x  \in C.
			\end{align}
			Since $\{w_{n_{j}}\}$ is weakly convergent, it is bounded. Therefore, from the Lipschitz continuity of $\mathcal{F}$, $\{\mathcal{F}w_{n_{j}}\}$ is also bounded. Also, as $\lim \limits_{k \to \infty}\|w_{n_{j}}-y_{n_{j}}\|=0$, we see that $\{y_{n_{j}}\}$ is also bounded. From the fact that $\{\lambda_{n}\}$ and $\beta$ are bounded, taking $j \to \infty$ in \eqref{newlem1}, we have
			\begin{align} \label{newlem2}
				\liminf \limits_{j \to \infty}\langle \mathcal{F}w_{n_{j}},x-w_{n_{j}} \rangle \geq 0,~\text{for all}~x  \in C. 
			\end{align}
			Also
			\begin{align} \label{newlem3}
				\langle \mathcal{F}y_{n_{j}},x-y_{n_{j}} \rangle=\langle \mathcal{F}y_{n_{j}}-\mathcal{F}w_{n_{j}},x-w_{n_{j}} \rangle+\langle \mathcal{F}w_{n_{j}},x-w_{n_{j}} \rangle+\langle \mathcal{F}y_{n_{j}},w_{n_{j}}-y_{n_{j}}\rangle.
			\end{align}
			Since $\lim \limits_{j \to \infty}\|w_{n_{j}}-y_{n_{j}}\|=0$, and $\mathcal{F}$ is $L$-Lipschitz continuous on $H$, we get
			\begin{align*}
				\lim \limits_{n \to \infty}\|\mathcal{F}w_{n_{j}}-\mathcal{F}y_{n_{j}}\|=0,
			\end{align*}
			which together with \eqref{newlem2} and \eqref{newlem3} implies that
			\begin{align} \label{use1}
				\liminf \limits_{j \to \infty}\langle \mathcal{F}y_{n_{j}},x-y_{n_{j}} \rangle \geq 0,~\text{for all}~x  \in C.
			\end{align}
			We now consider the following two subcases. \par
			\textbf{Case 2a:} If $\limsup \limits_{j \to \infty} \langle \mathcal{F}y_{n_{j}},x-y_{n_{j}}\rangle >0$. Then, we can choose a subsequence $\{y_{n_{j_{k}}}\}$ such that $\lim \limits_{k \to \infty}\langle \mathcal{F}y_{n_{j_{k}}},x-y_{n_{j_{k}}}\rangle >0$. Thus, there exists $k_{0} \geq 1$ such that
			\begin{align*}
				\langle \mathcal{F}y_{n_{j_{k}}},x-y_{n_{j_{k}}}\rangle >0,
			\end{align*}
			for all $k \geq k_{0}$. By quasi-monotonicity of $\mathcal{F}$ on $C$, we get
			\begin{align*}
				\langle \mathcal{F}x,x-y_{n_{j_{k}}}\rangle \geq 0.
			\end{align*}
			Letting $k \to \infty$ in the above inequality, we have $\langle \mathcal{F}x,x-v^{*}\rangle \geq 0$. It implies that $v^{*} \in S_{\mathcal{F}}$. \par
			\textbf{Case 2b:} If $\limsup \limits_{j \to \infty}\langle \mathcal{F}y_{n_{j}},x-y_{n_{j}} \rangle=0$. Then, by \eqref{use1}, we get 
			\begin{align*}
				\lim \limits_{j \to \infty}\langle \mathcal{F}y_{n_{j}},x-y_{n_{j}}\rangle=0,~\text{for all}~x \in C.
			\end{align*}
			It further implies that
			\begin{align*}
				\langle \mathcal{F}y_{n_{j}},x-y_{n_{j}}\rangle+|\langle \mathcal{F}y_{n_{j}},x-y_{n_{j}}\rangle|+\frac{1}{j+1}>0, ~\text{for all}~ x \in C.
			\end{align*}
			Let 
			\begin{align*}
				\tau_{j}=|\langle \mathcal{F}y_{n_{j}},x-y_{n_{j}}\rangle|+\frac{1}{j+1}>0,
			\end{align*}
			then we have
			\begin{align} \label{use2}
				\langle \mathcal{F}y_{n_{j}},x-y_{n_{j}}\rangle+\tau_{j}>0, ~\text{for all}~j \geq j_{0}. 
			\end{align}
			Furthermore, as $y_{n_{j}} \in C$, we have $\mathcal{F}y_{n_{j}}\neq 0$, for all $j \geq 1$. Setting $v_{n_{j}}=\frac{\mathcal{F}y_{n_{j}}}{\|\mathcal{F}y_{n_{j}}\|^2}$, we get $\langle \mathcal{F}y_{n_{j}},v_{n_{j}}\rangle=1$, for each $j \geq j_{0}$. It follows from \eqref{use2} that for each $j \geq j_{0}$
			\begin{align*}
				\langle \mathcal{F}y_{n_{j}},x+\tau_{j}v_{n_{j}}-y_{n_{j}}\rangle >0.
			\end{align*}
			Since $\mathcal{F}$ is quasi-monotone, we have
			\begin{align*}
				\langle \mathcal{F}(x+\tau_{j}v_{n_{j}}),x+\tau_{j}v_{n_{j}}-y_{n_{j}} \rangle \geq 0.
			\end{align*}
			Now, for all $j \geq j_{0}$, we finally have
			\begin{align*}
				\langle \mathcal{F}x,x+\tau_{j}v_{n_{j}}-y_{n_{j}}\rangle &= \langle \mathcal{F}x-\mathcal{F}(x+\tau_{j}v_{n_{j}}),x+\tau_{j}v_{n_{j}}-y_{n_{j}} \rangle+ \langle \mathcal{F}(x+\tau_{j}v_{n_{j}}),x+\tau_{j}v_{n_{j}}-y_{n_{j}} \rangle \\
				& \geq \langle \mathcal{F}x-\mathcal{F}(x+\tau_{j}v_{n_{j}}),x+\tau_{j}v_{n_{j}}-y_{n_{j}} \rangle \\
				& \geq -\|\mathcal{F}x-\mathcal{F}(x+\tau_{j}v_{n_{j}})\|\|x+\tau_{j}v_{n_{j}}-y_{n_{j}}\| \\
				&\geq -\tau_{j}L\|v_{n_{j}}\|\|x+\tau_{j}v_{n_{j}}-y_{n_{j}}\| \\
				&=-\tau_{j}L\frac{1}{\|\mathcal{F}v_{n_{j}}\|}\|x+\tau_{j}v_{n_{j}}-y_{n_{j}}\| \\
				& \geq -\tau_{j}L\frac{2}{T_{1}}\|x+\tau_{j}v_{n_{j}}-y_{n_{j}}\|.
			\end{align*}
			Since $\|x+\tau_{j}v_{n_{j}}-y_{n_{j}}\|$ is bounded and $\tau_{j} \to 0$ as $j \to \infty$. Thus, taking $j \to \infty$ in the inequality above, we have
			\begin{align}
				\langle \mathcal{F}x,x-v^{*}\rangle \geq 0,~\text{for all}~x \in C.
			\end{align}
			Hence $v^{*} \in S_{\mathcal{F}}$.

		\end{proof}
		\begin{theorem} \label{the22}
			Suppose that Assumptions (A1)-(A2),(A4)-(A7) hold, $\mathcal{F}$ is quasi-monotone on $H$ and $\mathcal{F}x\neq 0$ for all $x \in C$. Then the sequence $\{x_{n}\}$ obtained by MDISEM converges weakly to a point in $S_{\mathcal{F}} \subset VI(C,\mathcal{F})$.
		\end{theorem}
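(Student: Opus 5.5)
The plan is to apply Opial's lemma (Lemma \ref{lem2.3}) with the nonempty set $S_{\mathcal{F}}$ (nonempty by (A2)). This requires two ingredients: (a) $\lim_{n\to\infty}\|x_n-p^*\|$ exists for every $p^*\in S_{\mathcal{F}}$, and (b) every sequential weak cluster point of $\{x_n\}$ lies in $S_{\mathcal{F}}$. Once both hold, $\{x_n\}$ converges weakly to some point of $S_{\mathcal{F}}$. The inclusion $S_{\mathcal{F}}\subset VI(C,\mathcal{F})$ was noted in the introduction (since $C$ is convex and $\mathcal{F}$ is continuous), so the limit solves the VIP.

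For (a), I would observe that the proof of Lemma \ref{lem4.2a} never used (A3). It used only that $p^*\in S_{\mathcal{F}}$, which gives $\langle \mathcal{F}y_n,y_n-p^*\rangle\ge 0$ because $y_n\in C$, together with the Lipschitz step-size estimates and the parameter conditions (A4)--(A7). So, under the hypotheses of the theorem, all four conclusions of Lemma \ref{lem4.2a} hold. These are: boundedness of $\{x_n\}$, $\|x_{n+1}-x_n\|\to0$, existence of $\lim\|x_n-p^*\|$, and $\|x_n-w_n\|\to0$. The final part of that proof also gives $\|w_n-y_n\|\to 0$. This settles condition (i) of Lemma \ref{lem2.3}.

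For (b), let $x_{n_j}\rightharpoonup v^*$. Since $\|x_n-w_n\|\to0$, also $w_{n_j}\rightharpoonup v^*$, and $\|w_{n_j}-y_{n_j}\|\to0$ along this subsequence. Lemma \ref{use3ab} then yields $v^*\in S_{\mathcal{F}}$ or $\mathcal{F}v^*=0$. As $y_{n_j}\in C$ and $C$ is closed and convex, hence weakly closed, we get $v^*\in C$. The hypothesis $\mathcal{F}x\neq0$ on $C$ therefore excludes $\mathcal{F}v^*=0$, so $v^*\in S_{\mathcal{F}}$. Lemma \ref{lem2.3} then finishes the proof.

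The main obstacle is checking that Lemma \ref{lem4.2a} really goes through without (A3) and with only quasi-monotonicity. In particular, the key estimate \eqref{imp1} needs only $p^*\in S_{\mathcal{F}}$ and the positivity of $d_n$ for $n\ge n_0$. The latter comes from $\beta\mu<1$ and $\delta_n\lambda_n/\lambda_{n+1}\to1$, which uses (A7) and the lower bound on $\lambda_n$ from the Lipschitz continuity in (A1). I would also make sure the hypothesis $\mathcal{F}x\ne0$ on $C$ is exactly what Lemma \ref{use3ab} needs in Case 2b, where it guarantees $\mathcal{F}y_{n_j}\neq0$. Since the theorem assumes this, no further argument should be required.
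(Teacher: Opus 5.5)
Your proposal is correct and follows essentially the same route as the paper's proof. Both obtain boundedness, $\|x_{n+1}-x_n\|\to 0$, existence of $\lim\|x_n-p^*\|$ for $p^*\in S_{\mathcal{F}}$ and $\|x_n-w_n\|,\|w_n-y_n\|\to 0$ from Lemma \ref{lem4.2a}, whose proof indeed never uses (A3). Both then place every weak cluster point in $C$, invoke Lemma \ref{use3ab} together with $\mathcal{F}\neq 0$ on $C$ to conclude it lies in $S_{\mathcal{F}}$, and finish with Opial's lemma (Lemma \ref{lem2.3}); your explicit check that (A3) is not needed spells out what the paper only calls ``similar arguments.''
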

		\begin{proof}
			Using similar arguments as in Lemma \ref{lem4.2a}, we obtain that $\lim \limits_{n \to \infty} \|x_{n+1}-x_{n}\|=0$, $\lim \limits_{n \to \infty}\|x_{n}-p^{*}\|$ exist and $\lim \limits_{n \to \infty}\|x_{n}-w_{n}\|=0$.
			Suppose $t_{\omega}(x_{n})$ is the set of weak cluster points of $\{x_{n}\}$. Moreover, we see that $t_{\omega}(x_{n})=t_{\omega}(y_{n})=t_{\omega}(w_{n})$.
			We show that 
			\begin{align*}
				t_{\omega}(x_{n}) \subset S_{\mathcal{F}}.   
			\end{align*}
			Take $v^{*} \in t_{\omega}(x_{n})$, then there exists a subsequence $\{x_{n_{k}}\}$ of $\{x_{n}\}$ such that $x_{n_{k}} \rightharpoonup v^{*}$, as $k \to \infty$. Since $C$ is weakly closed, we have $v^{*} \in C$. As $\mathcal{F}x\neq 0$, for all $x \in \mathcal{C}$, we get $\mathcal{F}v^{*}\neq 0$. Therefore, it follows from Lemma \ref{use3ab} that $v^{*} \in S_{\mathcal{F}}$. By Lemma \ref{lem2.3}, we have $\{x_{n}\}$ converges weakly to a point in the solution set $S_{\mathcal{F}}$.
		\end{proof}
        \begin{rmrk}
           It is important to note that, when dealing with a non-monotone variational inequality, we impose the assumption that $VI(C,\mathcal{F})$ is a finite set. This assumption ensures that the sequence $\{x_{n}\}$ has only finitely many weak cluster points in $VI(C,\mathcal{F})$. In fact, it is shown that $\{x_n\}$ admits a unique weak cluster point in $VI(C,\mathcal{F})$, which yields weak convergence. In contrast, when dealing with a quasi-monotone variational inequality, we impose the condition $\mathcal{F}x \neq 0$ for all $x \in C$. Dropping this condition may still allow the existence of solutions in $VI(C,\mathcal{F})$; however, it no longer guarantees that the weak limit belongs to the solution set $S_{\mathcal{F}}$, since $ S_{\mathcal{F}} \subset VI(C,\mathcal{F})$  in general.
 
        \end{rmrk}
    Next, we establish the strong convergence result for our Algorithm 4.1 under the assumption that $\mathcal{F}$ is strongly pseudo-monotone and $L$-Lipschitz continuous on $H$. It has been shown that if $\mathcal{F}$ is strongly pseudo-monotone, then
		$VI(C,\mathcal{F})$ has a unique solution. Furthermore, $S_{\mathcal{F}}=VI(C,\mathcal{F})$.
		\begin{theorem}
			Suppose that Assumptions (A4)-(A7) hold, and $\mathcal{F}$ is $k$-strongly pseudo-monotone and $L$-Lipschitz continuous on $H$. Then the sequence $\{x_{n}\}$ generated by MDISEM converges strongly to unique element in $VI(C,\mathcal{F})$.
		\end{theorem}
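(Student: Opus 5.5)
Let $p^{*}$ denote the unique element of $VI(C,\mathcal{F})=S_{\mathcal{F}}$, which exists by strong pseudo-monotonicity and the remark preceding the theorem. Since $S_{\mathcal{F}}\neq\emptyset$ plays the role of (A2), the proof of Lemma \ref{lem4.2a}(i)--(iii) goes through verbatim. That proof only used $\langle \mathcal{F}y_{n},y_{n}-p^{*}\rangle\ge 0$, the step-size estimate \eqref{ss1} and the parameter conditions (A4)--(A7); it did not use (A1)'s weak continuity or (A3). So $\{x_n\}$ is bounded, $\|x_{n+1}-x_n\|\to 0$, and $\lim_n\|x_n-p^{*}\|$ exists. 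Part (iv) also carries over via \eqref{T55}, giving $\|x_n-w_n\|\to0$, $\|w_n-y_n\|\to0$, $\|w_n-u_n\|\to 0$ and $\|x_n-y_n\|\to0$.

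The plan is to sharpen \eqref{eqm} using strong pseudo-monotonicity. Since $p^{*}\in VI(C,\mathcal{F})$ and $y_n\in C$, we have $\langle \mathcal{F}p^{*},y_n-p^{*}\rangle\ge 0$. Strong pseudo-monotonicity then gives
\begin{align*}
\langle \mathcal{F}y_n,y_n-p^{*}\rangle\ge k\|y_n-p^{*}\|^2 .
\end{align*}
Carrying the extra term $k\|y_n-p^{*}\|^2$ through \eqref{T7}--\eqref{T55} yields, for $n\ge n_0$,
\begin{align*}
\|u_n-p^{*}\|^2\le \|w_n-p^{*}\|^2-2\sigma k\lambda_n d_n\|y_n-p^{*}\|^2-(\text{nonnegative terms}).
\end{align*}
By Remark 4.1(ii), $\lambda_n\ge\min\{\mu/L,\lambda_1\}>0$. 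By \eqref{T6}, $d_n$ is bounded below by a positive constant for large $n$, because $\delta_n\lambda_n/\lambda_{n+1}\to1$ and $\beta\mu<1$. Hence there is $c>0$ with
\begin{align*}
c\|y_n-p^{*}\|^2\le \|w_n-p^{*}\|^2-\|u_n-p^{*}\|^2\le(\|w_n-p^{*}\|+\|u_n-p^{*}\|)\|w_n-u_n\| .
\end{align*}
The right-hand side tends to $0$ by boundedness and $\|w_n-u_n\|\to0$. Therefore $y_n\to p^{*}$ strongly, and since $\|x_n-y_n\|\to 0$, also $x_n\to p^{*}$.

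The main obstacle is bookkeeping. I must make sure the extra negative term survives the chain \eqref{T9}--\eqref{T55} with the correct sign, since it enters through $-2\sigma\lambda_n d_n\langle\mathcal{F}y_n,u_n-p^{*}\rangle$. I must also check that the lower bound on $d_n$ is uniform for $n\ge n_0$, which requires $\lambda_n$ bounded above (it converges, by Remark 4.1(ii)). If that bookkeeping proves awkward, I will use a fallback. Since $\lim\|x_n-p^{*}\|$ exists, it suffices to show some subsequence of $\|y_n-p^{*}\|$ tends to $0$. Alternatively, I can pass to the limit directly: from $\|w_n-y_n\|\to0$, the projection inequality for $y_n$ gives $\liminf\langle\mathcal{F}y_n,x-y_n\rangle\ge0$ for all $x\in C$. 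Taking $x=p^{*}$ gives $k\|y_n-p^{*}\|^2\le\langle\mathcal{F}y_n,y_n-p^{*}\rangle$, whose $\limsup$ is $\le 0$, which again forces $y_n\to p^{*}$.
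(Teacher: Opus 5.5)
Your proof is correct. It shares the paper's key inequality: both arguments feed $\langle\mathcal{F}y_n,y_n-p^{*}\rangle\ge k\|y_n-p^{*}\|^2$ into \eqref{T1} to get $\|u_n-p^{*}\|^2\le\|w_n-p^{*}\|^2-2\sigma k\lambda_n d_n\|y_n-p^{*}\|^2-(\text{nonnegative terms})$. Both also rely on $\inf_n\lambda_n>0$ and $\liminf_n d_n\ge(1-\beta\mu)/(1+\beta\mu)^2>0$.

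The difference is how the extra term is exploited. The paper substitutes this inequality back into the inertial recursion for $\|x_{n+1}-p^{*}\|^2$ and sums over $n$. Using boundedness of $\{x_n\}$ and $\sum_n\|x_n-x_{n-1}\|^2<\infty$, it obtains $\sum_n\|y_n-p^{*}\|^2<\infty$. You instead read the inequality pointwise as $c\|y_n-p^{*}\|^2\le(\|w_n-p^{*}\|+\|u_n-p^{*}\|)\|w_n-u_n\|$ and invoke $\|w_n-u_n\|\to 0$ from Lemma \ref{lem4.2a}(iv). This is the same squeeze the paper itself uses there to get $\|w_n-y_n\|\to 0$.

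Your version is shorter and bypasses the paper's summation step. There, the terms $(\alpha_n\nu_n+\xi_n(1-\alpha_n))(\|x_n-p^{*}\|^2-\|x_{n-1}-p^{*}\|^2)$ have varying coefficients and do not literally telescope. Making that step rigorous needs a short Abel-summation argument, using that these coefficients are nondecreasing and bounded by $\varepsilon<1$. In exchange, the paper's version yields square-summability of $\|y_n-p^{*}\|$, which is more than the theorem needs.

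Your fallback is leaner still. It uses only boundedness, $\|w_n-y_n\|\to 0$ and $\inf_n\lambda_n>0$. It passes to the limit in the projection inequality as in the first half of the proof of Lemma \ref{Lem4.1a}, here along the whole sequence. Strong pseudo-monotonicity with $x=p^{*}$ then finishes, without touching $u_n$ or $d_n$.

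One small precision: the uniform lower bound on $d_n$ rests on $\lambda_n$ converging to a positive limit, so that $\delta_n\lambda_n/\lambda_{n+1}\to 1$. Upper boundedness of $\lambda_n$ alone would not give it.
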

		\begin{proof}
			We consider the unique element to be $p^{*} \in VI(C,\mathcal{F})$. This implies that $\langle \mathcal{F}p^{*},y_{n}-p^{*} \rangle \geq 0$, therefore by strong pseudo-monotonicity of $\mathcal{F}$, we get 
			$\langle \mathcal{F}y_{n},y_{n}-p^{*}\rangle \geq k\|y_{n}-p^{*}\|^2$.
			So $ \langle \mathcal{F}y_{n},y_{n}-u_{n}+u_{n}-p^{*}\rangle \geq k\|y_{n}-p^{*}\|^2$. This further implies
			\begin{align} \label{TTTT1}
				\langle \mathcal{F}y_{n},u_{n}-p^{*} \rangle &\geq k\|y_{n}-p^{*}\|^2+\langle \mathcal{F}y_{n},u_{n}-y_{n} \rangle \notag \\
				-2\sigma \lambda_{n}d_{n}\langle \mathcal{F}y_{n},u_{n}-p^{*}\rangle &\leq -2\sigma \lambda_{n}d_{n}\langle \mathcal{F}y_{n},u_{n}-y_{n} \rangle-2\sigma d_{n}\lambda_{n}k\|y_{n}-p^{*}\|^2.
			\end{align}
			Using \eqref{TTTT1} in \eqref{T1}, we get
			\begin{align}
				\|u_{n}-p^{*}\|^2 &\leq \|w_{n}-\sigma \lambda_{n}d_{n}\mathcal{F}y_{n}-p^{*}\|^2-\|w_{n}-\sigma \lambda_{n}d_{n}\mathcal{F}y_{n}-u_{n}\|^2 \notag \\
				&=\|w_{n}-p^{*}\|^2-\|w_{n}-u_{n}\|^2-2\sigma \lambda_{n}d_{n}\langle \mathcal{F}y_{n},u_{n}-p^{*} \rangle \notag \\
				&\leq \|w_{n}-p^{*}\|^2-\|w_{n}-u_{n}\|^2-2\sigma \lambda_{n}d_{n}\langle \mathcal{F}y_{n},u_{n}-y_{n}\rangle-2\sigma d_{n}\lambda_{n}k\|y_{n}-p^{*}\|^2.
			\end{align}
			From \eqref{T8} and using \eqref{T9} and \eqref{T10}, we get
			\begin{align*}
				-2\sigma d_{n}\lambda_{n}\langle \mathcal{F}y_{n},u_{n}-y_{n}\rangle &\leq -2\frac{\sigma}{\beta}d_{n}\langle \eta_{n},u_{n}-y_{n} \rangle \notag \\
				&=-2\frac{\sigma}{\beta}d_{n}^2\|\eta_{n}\|^2+\|w_{n}-u_{n}\|^2+\frac{\sigma^2}{\beta^2}d_{n}^2\|\eta_{n}\|^2-\|w_{n}-u_{n}-\frac{\sigma}{\beta}d_{n}\eta_{n}\|^2.
			\end{align*}
			Therefore, we have
			\begin{align}\label{TT11}
				\|u_{n}-p^{*}\|^2 &\leq \|w_{n}-p^{*}\|^2-\|w_{n}-u_{n}\|^2-2\sigma d_{n}\lambda_{n}k\|y_{n}-p^{*}\|^2-2\frac{\sigma}{\beta}d_{n}^2\|\eta_{n}\|^2+\|w_{n}-u_{n}\|^2 \notag \\
				&+\frac{\sigma^2}{\beta^2}d_{n}^2\|\eta_{n}\|^2-\|w_{n}-u_{n}-\frac{\sigma}{\beta}d_{n}\eta_{n}\|^2 \notag \\
				&=\|w_{n}-p^{*}\|^2-2\sigma d_{n}\lambda_{n}k\|y_{n}-p^{*}\|^2-\frac{\sigma}{\beta^2}d_{n}^2\|\eta_{n}\|^2(2\beta-\sigma)-\|w_{n}-u_{n}-\frac{\sigma}{\beta}d_{n}\eta_{n}\|^2.
			\end{align}
			Using \eqref{T11} in \eqref{TT11}, we get
			\begin{align*}
				\|u_{n}-p^{*}\|^2 &\leq \|w_{n}-p^{*}\|^2-\|w_{n}-u_{n}-\frac{\sigma}{\beta}d_{n}\eta_{n}\|^2-\frac{\sigma}{\beta^2}(2\beta-\sigma)\frac{(1-\beta \mu \delta_{n}\frac{\lambda_{n}}{\lambda_{n+1}})^2}{(1+\beta \mu \delta_{n}\frac{\lambda_{n}}{\lambda_{n+1}})^2}\|w_{n}-y_{n}\|^2 \notag \\
				&-2\sigma d_{n}\lambda_{n}k \|y_{n}-p^{*}\|^2. 
			\end{align*}
			Setting $\rho \leq \frac{(1-\beta \mu)^2}{(1+\beta \mu)^2}$, we get
			\begin{align*}
				1> \lim \limits_{n \to \infty} \frac{(1-\beta \mu \delta_{n}\frac{\lambda_{n}}{\lambda_{n+1}})^2}{(1+\beta \mu \delta_{n}\frac{\lambda_{n}}{\lambda_{n+1}})^2}=\frac{(1-\beta \mu)^2}{(1+\beta \mu)^2} \geq \rho,
			\end{align*}
			where $\rho$ is a fixed number. Also
			\begin{align*}
				d_{n}\geq \frac{(1-\beta \mu \delta_{n}\frac{\lambda_{n}}{\lambda_{n+1}})}{(1+\beta \mu \delta_{n}\frac{\lambda_{n}}{\lambda_{n+1}})^2}.
			\end{align*}
			Thus, we get
			\begin{align*}
				\lim \limits_{n \to \infty}d_{n} \geq \frac{1-\beta \mu}{(1+\beta \mu)^2}=d ~\text{(say)}.
			\end{align*}
			Hence, there exists $n_1\geq n_{0}$ such that for all $n \geq n_{1}$, we have
			\begin{align*}
				\|u_{n}-p^{*}\|^2 \leq \|w_{n}-p^{*}\|^2-\|w_{n}-u_{n}-\frac{\sigma}{\beta}d_{n}\eta_{n}\|^2-\frac{\sigma}{\beta^2}(2\beta-\sigma)\rho \|w_{n}-y_{n}\|^2-2\sigma d_{n}\lambda_{n}k\|y_{n}-p^{*}\|^2.
			\end{align*}
			Given that $\{\lambda_{n}\}$ has a lower bound $\min\{\frac{\mu}{L}, \lambda_{1}\} = \lambda$ (say), we get
			\begin{align} \label{un1}
				\|u_{n}-p^{*}\|^2 &\leq \|w_{n}-p^{*}\|^2-\frac{\sigma}{\beta^2}(2\beta-\sigma)\rho \|w_{n}-y_{n}\|^2-2\sigma d \lambda k \|y_{n}-p^{*}\|^2 \notag \\
				&=\|x_{n}+\nu_{n}(x_{n}-x_{n-1})-p^{*}\|^2-\frac{\sigma}{\beta^2}(2\beta-\sigma)\rho \|w_{n}-y_{n}\|^2-2\sigma d \lambda k \|y_{n}-p^{*}\|^2 \notag \\
				&=(1+\nu_{n})\|x_{n}-p^{*}\|^2-\nu_{n}\|x_{n-1}-p^{*}\|^2+\nu_{n}(1+\nu_{n})\|x_{n}-x_{n-1}\|^2\notag \\
				&-\frac{\sigma}{\beta^2}(2\beta-\sigma)\rho \|w_{n}-y_{n}\|^2-2\sigma d \lambda k \|y_{n}-p^{*}\|^2.
			\end{align}
			Substituting \eqref{un1} in \eqref{T31}, we get
			\begin{align*}
				\|x_{n+1}-p^{*}\|^2 &\leq (1-\alpha_{n})\|v_{n}-p^{*}\|^2+\alpha_{n}(1+\nu_{n})\|x_{n}-p^{*}\|^2-\alpha_{n}\nu_{n}\|x_{n-1}-p^{*}\|^2\notag \\
				&+\alpha_{n}\nu_{n}(1+\nu_{n})\|x_{n}-x_{n-1}\|^2 -\frac{\sigma}{\beta^2}(2\beta-\sigma)\rho \alpha_{n} \|w_{n}-y_{n}\|^2-2 \alpha_{n}\sigma d \lambda k \|y_{n}-p^{*}\|^2 \notag \\
				&-\alpha_{n}(1-\alpha_{n})\|u_{n}-v_{n}\|^2 \notag \\
				&=(1-\alpha_{n})\|v_{n}-p^{*}\|^2+\alpha_{n}(1+\nu_{n})\|x_{n}-p^{*}\|^2-\alpha_{n}\nu_{n}\|x_{n-1}-p^{*}\|^2\notag \\
				&+\alpha_{n}\nu_{n}(1+\nu_{n})\|x_{n}-x_{n-1}\|^2 -\frac{\sigma}{\beta^2}(2\beta-\sigma)\rho \alpha_{n} \|w_{n}-y_{n}\|^2-2 \alpha_{n}\sigma d \lambda k \|y_{n}-p^{*}\|^2 \notag \\
				&-\frac{(1-\alpha_{n})}{\alpha_{n}}\|x_{n+1}-v_{n}\|^2 \notag \\
				&\leq (1-\alpha_{n})\big[(1+\xi_{n})\|x_{n}-p^{*}\|^2-\xi_{n} \|x_{n-1}-p^{*}\|^2+\xi_{n}(1+\xi_{n})\|x_{n}-x_{n-1}\|^2\big] \notag \\
				&+\alpha_{n}(1+\nu_{n})\|x_{n}-p^{*}\|^2-\alpha_{n}\nu_{n}\|x_{n-1}-p^{*}\|^2+\alpha_{n}\nu_{n}(1+\nu_{n})\|x_{n}-x_{n-1}\|^2 \notag \\
				& -\frac{\sigma}{\beta^2}(2\beta-\sigma)\rho \alpha_{n} \|w_{n}-y_{n}\|^2-2 \alpha_{n}\sigma d \lambda k \|y_{n}-p^{*}\|^2 \notag \\
				&-\frac{(1-\alpha_{n})}{\alpha_{n}}\big[(1-\xi_{n})\|x_{n+1}-x_{n}\|^2+(\xi_{n}^2-\xi_{n})\|x_{n}-x_{n-1}\|^2 \big] \notag \\
				&=\big[(1-\alpha_{n})(1+\xi_{n})+\alpha_{n}(1+\nu_{n}) \big]\|x_{n}-p^{*}\|^2-(\xi_{n}(1-\alpha_{n})+\alpha_{n}\nu_{n})\|x_{n-1}-p^{*}\|^2 \notag \\
				&+\big[(1-\alpha_{n})\xi_{n}(1+\xi_{n})+\alpha_{n}\nu_{n}(1+\nu_{n})-\frac{(1-\alpha_{n})(\xi_{n}^2-\xi_{n})}{\alpha_{n}} \big]\|x_{n}-x_{n-1}\|^2 \notag \\
				&-\frac{\sigma}{\beta^2}(2\beta-\sigma)\rho \alpha_{n}\|w_{n}-y_{n}\|^2-2 \alpha_{n}\sigma d \lambda k\|y_{n}-p^{*}\|^2-\frac{(1-\alpha_{n})(1-\xi_{n})}{\alpha_{n}}\|x_{n+1}-x_{n}\|^2.  
			\end{align*}
			By the assumptions on $\alpha_{n},\nu_{n}$, and $\xi_{n}$, we get
			\begin{align*}
				(1-\alpha_{n})\xi_{n} (1+\xi_{n})+\alpha_{n}\nu_{n}(1+\nu_{n})-\frac{(1-\alpha_{n})(\xi_{n}^2-\xi_{n})}{\alpha_{n}} &\leq (1-\alpha)\xi (1+\xi)+\frac{2}{1+\bar{\theta}}-\frac{(1-\alpha)}{4\alpha} \notag \\
				&=K^{*}~\text{(say)}.
			\end{align*}
			Therefore, we have
			\begin{align*}
				\|x_{n+1}-p^{*}\|^2 &\leq (1+\alpha_{n}\nu_{n}+\xi_{n}(1-\alpha_{n}))\|x_{n}-p^{*}\|^2-(\alpha_{n}\nu_{n}+\xi_{n}(1-\alpha_{n}))\|x_{n-1}-p^{*}\|^2+K^{*}\|x_{n}-x_{n-1}\|^2 \notag \\
				&-2\alpha \sigma d \lambda k \|y_{n}-p^{*}\|^2.
			\end{align*}
			It implies
			\begin{align*}
				2\alpha \sigma d \lambda k \|y_{n}-p^{*}\|^2 &\leq \|x_{n}-p^{*}\|^2-\|x_{n+1}-p^{*}\|^2+(\alpha_{n}\nu_{n}+\xi_{n}(1-\alpha_{n})(\|x_{n}-p^{*}\|^2-\|x_{n-1}-p^{*}\|^2) \notag \\
				&+K^{*}\|x_{n}-x_{n-1}\|^2.
			\end{align*}
			Taking the summation, we get
			\begin{align*}
				2\alpha \sigma d \lambda k \sum \limits_{i=N}^{n}\|y_{i}-p^{*}\|^2 &\leq \|x_{N}-p^{*}\|^2-\|x_{n+1}-p^{*}\|^2+(\alpha_{n}\nu_{n}+\xi_{n}(1-\alpha_{n})\|x_{n}-p^{*}\|^2\notag \\
				&-(\alpha_{N-1}\nu_{N-1}+\xi_{N-1}(1-\alpha_{N-1}))\|x_{n-1}-p^{*}\|^2+K^{*}\sum \limits_{i=N}^{n}\|x_{i}-x_{i-1}\|^2.
			\end{align*}
			As, we have already shown that the sequence $\{x_{n}\}$ is bounded and also $\sum \limits_{i=N}^{\infty}\|x_{i}-x_{i-1}\|^2 <+\infty$, thus we get $\sum \limits_{i=N}^{\infty}\|y_{i}-p^{*}\|^2<+\infty$. Therefore, $\lim \limits_{n \to \infty}\|y_{n}-p^{*}\|=0$.
			Consequently, we get
			\begin{align*}
				\|x_{n}-p^{*}\|\leq \|x_{n}-w_{n}\|+\|w_{n}-y_{n}\|+\|y_{n}-p^{*}\| \to 0~\text{as}~n \to \infty.
			\end{align*}
			Hence the result.
		\end{proof}
       Now, we provide another algorithm from Algorithm~4.1 with fewer parameters by choosing $\nu_n = 1$, $\xi_n = 0$, $\delta_n = 1$, $\chi_n = 1$, $\zeta_n = 0$, $\gamma = 1$, $\sigma = 1$, and $\beta = 1$, which is straightforward to implement in practical applications. 
        \vspace{0.2in}
\begin{table}[ht]
			\centering
			\begin{tabular}{c}
				\hline
				\textbf{Algorithm 4.1a}\\
				\hline
			\end{tabular}
		\end{table} 
        
		\textbf{Initialization:} Choose $\mu \in (0,1)$, $\lambda_{1}>0$, $x_{0} ~\text{and}~x_{1} \in H $. Calculate the next iterate $x_{n+1}$ as:
		\\
		\textbf{Step 1.} Compute
		\begin{align*}
			w_{n}&=2x_{n}-x_{n-1},\\
			y_{n}&=P_{C}(w_{n}- \lambda_{n}\mathcal{F}w_{n})
		\end{align*}

		and update $\lambda_{n+1}$, the step-size, as
		\begin{align*}
			\lambda_{n+1}=\begin{array}{cc}
				\bigg{ \{ }& 
				\begin{array}{cc}
					\min\{ \frac{\mu  \|w_{n}-y_{n}\|}{\|\mathcal{F}w_{n}-\mathcal{F}y_{n}\|},\lambda_{n}
					\} & \text{if} ~\mathcal{F}w_{n}\neq \mathcal{F}y_{n}\\
					\lambda_{n}& \text{otherwise}.
				\end{array}
			\end{array}
		\end{align*}
		We consider $y_{n}$ as a solution of (VIP) if $w_{n}=y_{n}$ (or $\mathcal{F}y_n=0$).  Otherwise \\
		\textbf{Step 2.} Compute 
		\begin{align*}
			u_{n}=P_{T_{n}}(w_{n}- \lambda_{n}d_{n}\mathcal{F}y_{n}),
		\end{align*}
		where
		\begin{align*}
			T_{n}=\{x \in H : \langle w_{n}-\lambda_{n}\mathcal{F}w_{n}-y_{n},x-y_{n}\rangle \leq 0\},
		\end{align*}
		and
		\begin{align*} 
			d_{n}=\frac{\langle w_{n}-y_{n}, \eta_{n}\rangle}{\|\eta_{n}\|^2},
		\end{align*}
		\begin{align*} 
			\eta_{n}=w_{n}-y_{n}- \lambda_{n}(\mathcal{F}w_{n}-\mathcal{F}y_{n}).
		\end{align*}
		\textbf{ Step 3.} Compute
		\begin{align*}
			x_{n+1}&=(1-\alpha_{n})x_{n}+\alpha_{n}u_{n}.
		\end{align*}

		Set $n \gets n+1$ and go to \textbf{Step 1.}
        
\begin{rmrk}
    For Algorithm~4.1(a), Lemmas \ref{Lem4.1a}, \ref{lem4.2a}, and~\ref{use3ab} as well as Theorems~\ref{theorem1a} and~\ref{the22} continue to hold.
\end{rmrk}

We now establish the linear convergence of Algorithm~4.1 (MDISEM) by considering a constant step size $\lambda_n=\lambda\in\bigl(0,\tfrac{1}{L}\bigr)$ and parameters satisfying $\xi_{n}=0,~ \beta=1, ~\sigma=1,~ \gamma=1$, $\nu_{n}=\nu$ and $\alpha_{n}=\alpha$.\\

  \begin{center}
      \begin{tabular}{c}
				\hline
				\textbf{Algorithm 4.1b} Linear inertial subgradient extragradient method\\
				\hline
			\end{tabular}
  \end{center}
    
        
		\textbf{Initialization:} For $x_{0} ~\text{and}~x_{1} \in H $, the next iterate $x_{n+1}$ is calculated as follows:
		\\
		\textbf{Step 1.} Compute
		\begin{align*}
			w_{n}&=x_{n}+\nu(x_{n}-x_{n-1}),\\
			y_{n}&=P_{C}(w_{n}- \lambda \mathcal{F}w_{n})
		\end{align*}
		\textbf{Step 2.} Compute 
		\begin{align*}
			u_{n}=P_{T_{n}}(w_{n}- \lambda d_{n}\mathcal{F}y_{n}),
		\end{align*}
		where
		\begin{align*}
			T_{n}=\{x \in H : \langle w_{n}-\lambda\mathcal{F}w_{n}-y_{n},x-y_{n}\rangle \leq 0\},
		\end{align*}
		and
		\begin{align*} 
			d_{n}=\frac{\langle w_{n}-y_{n}, \eta_{n}\rangle}{\|\eta_{n}\|^2},
		\end{align*}
		\begin{align*} 
			\eta_{n}=w_{n}-y_{n}- \lambda(\mathcal{F}w_{n}-\mathcal{F}y_{n}).
		\end{align*}
		\textbf{ Step 3.} Compute
		\begin{align*}
			x_{n+1}&=(1-\alpha)x_{n}+\alpha u_{n}.
		\end{align*}

		Set $n \gets n+1$ and go to \textbf{Step 1.}
        \begin{theorem}
The sequence $\{x_{n}\}$ generated by Algorithm 4.1b converges linearly to a unique element in $VI(C,\mathcal{F})$, if  the following assumptions are satisfied:
\begin{enumerate} \item[(B1):] $\mathcal{F}$ is $k$-strongly pseudo-monotone and $L$-Lipschitz continuous;
\item[(B2):] The step-size $\lambda \in (0,\frac{1}{L})$; \item[(B3):] $0\leq \nu <\frac{1}{t}-1$, where $t=1-\frac{1}{2}\min\bigg\{\frac{(1-\lambda L)^2}{(1+\lambda L)^2},2\lambda k\frac{(1-\lambda L)}{(1+\lambda L)^2}\bigg\} \in (0,1)$; \item[(B4):] $0 <\alpha <\frac{1}{3}$. \end{enumerate}
\end{theorem}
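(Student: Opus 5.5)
We will prove linear convergence by reducing the two-step recursion generated by Algorithm 4.1b to a one-step contraction in a lifted quantity. Let $p^{*}$ be the unique element of $VI(C,\mathcal{F})$. Repeating the computation that led to \eqref{TT11} and \eqref{T11}, now with $\beta=\sigma=1$, constant $\lambda$, and the exact Lipschitz bound $\|\mathcal{F}w_n-\mathcal{F}y_n\|\le L\|w_n-y_n\|$ in place of \eqref{ss1}, we expect
\begin{align*}
\|u_n-p^{*}\|^2\le \|w_n-p^{*}\|^2-\frac{(1-\lambda L)^2}{(1+\lambda L)^2}\|w_n-y_n\|^2-2\lambda k\,d_n\|y_n-p^{*}\|^2,
\end{align*}
with $d_n\ge \frac{1-\lambda L}{(1+\lambda L)^2}$. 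Set $c=\min\{\frac{(1-\lambda L)^2}{(1+\lambda L)^2},2\lambda k\frac{1-\lambda L}{(1+\lambda L)^2}\}$. The two negative terms are then at least $c(\|w_n-y_n\|^2+\|y_n-p^{*}\|^2)\ge \frac{c}{2}\|w_n-p^{*}\|^2$, using $\|a+b\|^2\le 2\|a\|^2+2\|b\|^2$. This gives the key contraction
\begin{align*}
\|u_n-p^{*}\|^2\le t\,\|w_n-p^{*}\|^2,\qquad t=1-\tfrac{c}{2}\in(0,1),
\end{align*}
which explains the constant $t$ in (B3).

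Next, we use $x_{n+1}=(1-\alpha)x_n+\alpha u_n$ together with Lemma \ref{lem2.1}(ii), dropping the nonpositive cross term or keeping it as $-\alpha(1-\alpha)\|x_n-u_n\|^2$. This yields
\begin{align*}
\|x_{n+1}-p^{*}\|^2\le(1-\alpha)\|x_n-p^{*}\|^2+\alpha t\|w_n-p^{*}\|^2 .
\end{align*}
We then expand $\|w_n-p^{*}\|^2$ via \eqref{T34} with $\nu_n=\nu$, and control the term $\nu(1+\nu)\|x_n-x_{n-1}\|^2$ by absorbing it into the $-\alpha(1-\alpha)\|x_n-u_n\|^2=-\frac{1-\alpha}{\alpha}\|x_{n+1}-x_n\|^2$ term, exactly as in \eqref{T36}--\eqref{T44}. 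The result is a recursion of the form
\begin{align*}
a_{n+1}+\kappa\|x_{n+1}-x_n\|^2\le\bigl(1-\alpha+\alpha t(1+\nu)\bigr)a_n-\alpha t\nu\,a_{n-1}+\alpha t\nu(1+\nu)\|x_n-x_{n-1}\|^2,
\end{align*}
where $a_n=\|x_n-p^{*}\|^2$ and $\kappa=\frac{1-\alpha}{\alpha}$. Since the $a_{n-1}$ coefficient is nonpositive, we may drop it. Condition (B3) gives $t(1+\nu)<1$, so $q:=1-\alpha(1-t(1+\nu))<1$. Condition (B4), $\alpha<\frac13$, gives $\kappa>2$, which dominates $\alpha t\nu(1+\nu)\le 2\alpha$; hence $\alpha t\nu(1+\nu)\le \rho\kappa$ for some $\rho<1$.

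Defining $\Gamma_n=a_n+\kappa\|x_n-x_{n-1}\|^2$, we obtain $\Gamma_{n+1}\le\max\{q,\rho\}\Gamma_n$. Therefore $\|x_n-p^{*}\|^2\le\Gamma_n\le M\,\theta^n$ with $\theta<1$, which is R-linear convergence.

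We expect the main obstacle to be the bookkeeping in this last step. The task is to choose the Lyapunov weight so that the $\|x_n-x_{n-1}\|^2$ coefficient genuinely contracts under only $\alpha<\frac13$ and $\nu<\frac1t-1$. If the direct weight $\kappa$ fails, our first fallback is to absorb part of the $\|x_n-x_{n-1}\|^2$ term using $\|x_n-x_{n-1}\|^2\le 2a_n+2a_{n-1}$ and track the triple $(a_n,a_{n-1},\|x_n-x_{n-1}\|^2)$ instead.
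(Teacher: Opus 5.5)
Your proposal is correct and follows essentially the same route as the paper. The shared steps are the contraction $\|u_n-p^{*}\|^2\le t\|w_n-p^{*}\|^2$, the convex-combination identity keeping the term $-\frac{1-\alpha}{\alpha}\|x_{n+1}-x_n\|^2$, the expansion of $\|w_n-p^{*}\|^2$, dropping the $a_{n-1}$ term, and a one-step contraction of a Lyapunov quantity. The only cosmetic difference is that you weight $\|x_n-x_{n-1}\|^2$ by $\kappa=\frac{1-\alpha}{\alpha}$, whereas the paper uses $\kappa>1$ to reduce to weight $1$ and then checks $\nu(1+\nu)\alpha t<1-\alpha(1-t(1+\nu))$; your fallback is not needed.
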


        \begin{proof}
           Let $p^{*} \in VI(C,\mathcal{F})$ be the unique element. Then, we have
$\langle \mathcal{F}p^{*}, x - p^{*} \rangle \geq 0$, for all $x \in C$.
Since $\mathcal{F}$ is $k$-strongly pseudo-monotone, we have
$\langle \mathcal{F}y_n, y_n - p^{*} \rangle \geq k\|y_n - p^{*}\|^{2}$. 
It implies
\begin{align*}
\langle \mathcal{F}y_n, y_n - u_n \rangle + \langle \mathcal{F}y_n, u_n - p^{*} \rangle
\geq k\|y_n - p^{*}\|^{2}.
\end{align*}
Therefore, we have
\begin{align}
\langle \mathcal{F}y_n, u_n - p^{*} \rangle
\geq k\|y_n - p^{*}\|^{2} + \langle \mathcal{F}y_n, u_n - y_n \rangle .
\end{align}
On similar lines of \eqref{TT11}, we get
\begin{align} \label{re0}
\|u_n - p^{*}\|^{2}
\leq \|w_n - p^{*}\|^{2} - d_n^{2}\|\eta_n\|^{2}
- 2d_n \lambda k\| y_n - p^{*}\|^2 .
\end{align}
From the definition of $d_n$, we have
\begin{align} \label{re2}
d_n
&= \frac{\langle w_n - y_n, \eta_n \rangle}{\|\eta_n\|^{2}} \nonumber\\
&= \frac{\langle w_n - y_n, w_n - y_n - \lambda (\mathcal{F}w_n - \mathcal{F}y_n) \rangle}{\|\eta_n\|^{2}} \nonumber\\
&\geq \frac{(1 - \lambda L)\|w_n - y_n\|^{2}}{\|\eta_n\|^{2}} .
\end{align}
Consider
\begin{align} \label{re1}
\|\eta_n\|
&= \|w_n - y_n - \lambda(\mathcal{F}w_n - \mathcal{F}y_n)\| \nonumber\\
&\leq \|w_n - y_n\| + \lambda\|\mathcal{F}w_n - \mathcal{F}y_n\| \nonumber\\
&= (1 + \lambda L)\|w_n - y_n\|.
\end{align}
Using \eqref{re1} in \eqref{re2}, we get
\begin{align} \label{re3}
d_n \geq \frac{1 - \lambda L}{(1 + \lambda L)^{2}} .
\end{align}
Therefore,
\begin{align} \label{re4}
d_n^{2}\|\eta_n\|^{2}
\geq \frac{(1 - \lambda L)^{2}}{(1 + \lambda L)^{2}}
\|w_n - y_n\|^{2}.
\end{align}
Using \eqref{re3} and \eqref{re4} in \eqref{re0}, we get
\begin{align} \label{re5}
\|u_n - p^{*}\|^{2}
\leq \|w_n - p^{*}\|^{2}
- \frac{(1 - \lambda L)^{2}}{(1 + \lambda L)^{2}}\|w_n - y_n\|^{2}
- 2\lambda k \frac{(1 - \lambda L)}{(1 + \lambda L)^{2}}\|y_n - p^{*}\|^{2}.
\end{align}
As
\begin{align*}
\frac{(1 - \lambda L)^{2}}{(1 + \lambda L)^{2}}\|w_n - y_n\|^{2}
&+ 2\lambda k \frac{(1 - \lambda L)}{(1 + \lambda L)^{2}}\|y_n - p^{*}\|^{2}\\
&\geq \min\!\left\{
\frac{(1 - \lambda L)^{2}}{(1 + \lambda L)^{2}},
2\lambda k \frac{(1 - \lambda L)}{(1 + \lambda L)^{2}}
\right\}
(\|w_n - y_n\|^{2} + \|y_n - p^{*}\|^{2}) \\
&\geq \frac{1}{2}
\min\!\left\{
\frac{(1 - \lambda L)^{2}}{(1 + \lambda L)^{2}},
2\lambda k \frac{(1 - \lambda L)}{(1 + \lambda L)^{2}}
\right\}
\|w_n - p^{*}\|^{2}.
\end{align*}
Therefore, \eqref{re5} becomes
\begin{align} \label{re6}
\|u_n - p^{*}\|^{2} \leq t\|w_n - p^{*}\|^{2}.
\end{align}
From the definition of $x_n$, we have
\begin{align} \label{re7}
\|x_n - u_n\|^{2} = \frac{1}{\alpha^{2}}\|x_{n+1} - x_n\|^{2}.
\end{align}
Using \eqref{re6} and \eqref{re7}, we obtain
\begin{align*}
\|x_{n+1} - p^{*}\|^{2}
&\leq (1 - \alpha)\|x_n - p^{*}\|^{2}
+ \alpha t\|w_n - p^{*}\|^{2}
- \frac{1 - \alpha}{\alpha}\|x_{n+1} - x_n\|^{2} \\
&\leq (1 - \alpha)\|x_n - p^{*}\|^{2}
+ \alpha t[(1 + \nu)\|x_n - p^{*}\|^{2}
- \nu\|x_{n-1} - p^{*}\|^{2}
+ \nu(1 + \nu)\|x_n - x_{n-1}\|^{2}] \\
&\quad - \frac{1 - \alpha}{\alpha}\|x_{n+1} - x_n\|^{2}.
\end{align*}
It implies that
\begin{align*}
\|x_{n+1} - p^{*}\|^{2}
+ \frac{1 - \alpha}{\alpha}\|x_{n+1} - x_n\|^{2}
&\leq [1 - \alpha(1 - t(1 + \nu))]\|x_n - p^{*}\|^{2}
- \alpha t\nu\|x_{n-1} - p^{*}\|^{2}\\
&+ \nu(1 + \nu)\alpha t\|x_n - x_{n-1}\|^{2}.
\end{align*}
As $0 < \alpha < \frac{1}{3}$, we have
\begin{align*}
\|x_{n+1} - p^{*}\|^{2} + \|x_{n+1} - x_n\|^{2}
\leq [1 - \alpha(1 - t(1 + \nu))]
\left[
\|x_n - p^{*}\|^{2}
+ \frac{\nu(1 + \nu)\alpha t}{1 - \alpha(1 - t(1 + \nu))}
\|x_n - x_{n-1}\|^{2}
\right].
\end{align*}
Since
$\frac{\nu(1 + \nu)\alpha t}{1 - \alpha(1 - t(1 + \nu))} < 1$,
we get
\begin{align*}
\|x_{n+1} - p^{*}\|^{2} + \|x_{n+1} - x_n\|^{2}
\leq [1 - \alpha(1 - t(1 + \nu))]
\bigg[\|x_n - p^{*}\|^{2} + \|x_n - x_{n-1}\|^{2}\bigg].
\end{align*}
We define $b_n = \|x_n - p^{*}\|^{2} + \|x_n - x_{n-1}\|^{2}$, for all $n \geq 1$.
Thus, we have
\[
b_{n+1} \leq [1 - \alpha(1 - t(1 + \nu))] b_n.
\]
By induction, we get
\[
b_{n+1} \leq [1 - \alpha(1 - t(1 + \nu))]^n b_1.
\]
Thus, we have
\[
\|x_{n+1} - p^{*}\|^{2} \leq [1 - \alpha(1 - t(1 + \nu))]^n b_1.
\]
This completes the proof.
\end{proof}
\section{Numerical Illustrations}
In this section, we provide some numerical experiments to validate our main result. We compare the performance of our algorithm against some well-established algorithms of Thong et al. \cite{9}, Shehu et al. \cite{10}, Liu and Yang \cite{liu} and Thong et al. \cite{Newww}. 

Algorithm 4.1 (MDISEM) incorporates two inertial parameters, and we compare its performance with well-known algorithms that use no inertial parameter and those with a single inertial parameter (see \cite{liu, 10,Newww,9}).
For our convenience, we denote Algorithm 1 of Thong et al. \cite{9}, Algorithm 1 of Shehu et al. \cite{10}, Algorithm 3.1 of Liu and Yang \cite{liu} and Algorithm 4.1 of Thong et al. \cite{Newww} as T Algorithm, S Algorithm, L Algorithm and T1 Algorithm, respectively. 

We define $E_{n}=\|w_n-y_n\|$ to measure the $n$-th iteration error and the convergence of $E_{n} \to 0$ implies that the sequence $\{x_{n}\}$ converges to $p^{*}$. We terminate the iterative process for network equilibrium flow and Nash-Cournot problem if the error term, $E_{n}$, falls below $10^{-6}$. 

For the image restoration problem, we calculate the relative error $R_{n}$, and terminate the process if $R_{n}$ falls below a desired threshold $\epsilon$.

All the projections onto the set $C$ are calculated using Matlab inbuilt function \textit{quadprog}, which is a part of the optimization toolbox. The projection onto the half-space is calculated using an explicit formula (see \cite{He} for details).
All computations are performed using MATLAB 2018a on an Intel(R) Core(TM) i3-10110U CPU @ 2.10GHz computer with 8.00 GB of RAM. 
\subsection{Network equilibrium flow} \label{new1}
In this example, we deal with one of the most important problems in traffic networks, namely, the network equilibrium flow. Mastroeni and Pappalardo \cite{11} formulated the variational inequality model of this problem. We provide only a short description, for more details, we refer to \cite{11} and \cite{12}. The following notations will help understand the model:
\begin{enumerate}
    \item $x_{i}$ is the flow on the arc $P_{i}=(r,s)$, and $x=(x_{1},x_{2},...,x_{n})^{T}$ is the vector of the flow across all arcs;
    \item an upper bound $d_{i}$ is associated with each arc $P_{i}$ on its capacity, $d=(d_{1},d_{2},...,d_{n})$;
    \item for each arc $P_{i}$, $\mathcal{F}_{i}(x)$ is the cost-variation as a function of the flows, and $\mathcal{F}x=(\mathcal{F}_{1}(x),\mathcal{F}_{2}(x),...,\mathcal{F}_{n}(x))^{T}$, with $\mathcal{F}x\geq 0$;
    \item $r_{j}$ is the balance at the node $j$, $j=1,2,...,p$ and $r=(r_{1},r_{2},...,r_{q})^{T}$;
    \item $T=(a_{ij}) \in \mathbb{R}^{q} \times \mathbb{R}^{n}$ is the node-arc incidence matrix whose elements are
    \begin{align*}
        a_{ij}=\begin{array}{cc}
 \bigg{ \{ }& 
    \begin{array}{cc}
     -1 & \text{if $i$ is the initial node for the arc $P_{j}$} \\
      +1 & \text{if $i$ is the final node of the arc $P_{j}$}\\
      0 & \text{otherwise.}
    \end{array}
\end{array}
    \end{align*}   
\end{enumerate}
A flow $x$ is said to be a variational equilibrium flow for the capacitated model if and only if it solves the following variational inequality:
\begin{align}
    \text{find}~p^{*} \in C_{x}~\text{such that} ~ \langle \mathcal{F}p^{*},x-p^{*} \rangle \geq 0~\text{for all}~x \in C_{x},
\end{align}
where
\begin{align*}
    C_{x}=\{x \in \mathbb{R}^{n}, Tx=r,0\leq x \leq d \}.
\end{align*}
In the numerical experiments, we take $r=(-2,0,0,0,0,2)^{T}$ and $d=(2,1,1,1,1,1,2,2)$ and
\begin{align*}
    T=\begin{bmatrix}
-1 & -1 & 0 & 0 & 0 & 0 & 0 & 0\\
1 & 0 & -1 & -1 & 0 & 0 & 0 & 0\\
0 & 1 & 0 & 0 & -1 & -1 & 0 & 0 \\
0 & 0 & 1 & 0 & 1 & 0 & -1 & 0 \\
0 & 0 & 0 & 1 & 0 & 1 & 0 & -1 \\
0 & 0 & 0 & 0 & 0 & 0 & 1 & 1
\end{bmatrix}.
\end{align*}
The cost function is defined as $\mathcal{F}x = \text{diag}(D)x$, where $D = (5.5, 1, 2, 3, 4, 50, 3.5, 1.5)$. Now, it is clear that
\begin{align*}
    \langle \mathcal{F}x-\mathcal{F}y,x-y\rangle =\sum \limits_{i=1}^{8}D_{i}\|x_{i}-y_{i}\|^2 \geq 0.
\end{align*}
Thus, $\mathcal{F}$ is monotone, and it is easy to see that $\mathcal{F}$ is $\|D\|$-Lipschitz continuous. Also, if $x_{n} \to p^{*}$, then $\mathcal{F}x_{n} \to \mathcal{F}p^{*}$. Thus, it is easy to see from Remark \ref{remarks}(iv) that the cost operator $\mathcal{F}$ satisfies the Assumption $(A3)$. The solution to this problem is given by
\begin{align}
    p^{*}=(1.000,1.000,0.1575,0.8425,0.885,0.115,1.0425,0.9575)^{T}.
\end{align}
We use the following parameters for the computation:
\begin{itemize}
    \item \textbf{MDISEM:} $\mu=0.6$, $\lambda_{1}=0.6$, $\beta=0.8$, $\sigma=1.5$, $\alpha_{n}=0.5$, $\delta_{n}=1+\frac{1}{n}$, $\chi_{n}=1+\frac{1}{(n+1)^{1.1}}$, $\zeta_{n}=\frac{1}{(n+1)^{1.1}}$, $\xi=0.4990$ and $\nu_{n}=1$.
    \item \textbf{L Algorithm:} $\mu=0.6$, $\lambda_{0}=0.6$ and $p_{n}=\frac{1}{(n+1)^{1.1}}$.
    \item \textbf{S Algorithm:} $\mu=0.6$, $\lambda_{1}=0.6$, $\alpha_{n}=0.2$, $\nu_{n}=1$ and $\gamma=1.9$.
    \item \textbf{T Algorithm:} $\mu=0.6$, $\nu_{1}=0.6$, $\theta=0.2$, and $\alpha_{n}=\frac{1}{n^{\frac{3}{2}}}$.
    
\end{itemize}
Based on Figure \ref{trafgr} and Table \ref{traftab}, it is evident that the sequence produced by our algorithm converges to the solution much faster compared to previously established algorithms. Moreover, Figure \ref{permemb} and Table \ref{sen1} provide a detailed sensitivity analysis for varying $\mu,\beta$ and $\sigma$. \\
The performance of our algorithm in the context of network equilibrium flow demonstrates reliable efficiency and consistency across varying values of $\mu,\beta$ and $\sigma$. Through extensive testing on different parameters (see Table \ref{sen1}), the algorithm consistently exhibits fast convergence, ensuring computational efficiency. Moreover, the fluctuation in solution convergence remains minimal, further validating the robustness and stability of the proposed iterative scheme. It is also validated graphically in the Figure \ref{permemb} for $\mu=0.6$.
These results highlight the algorithm’s effectiveness in solving network equilibrium flow problems under diverse conditions.
\begin{figure}
    \centering
     \includegraphics[scale=0.5,trim=28mm 40mm 30mm 30mm]{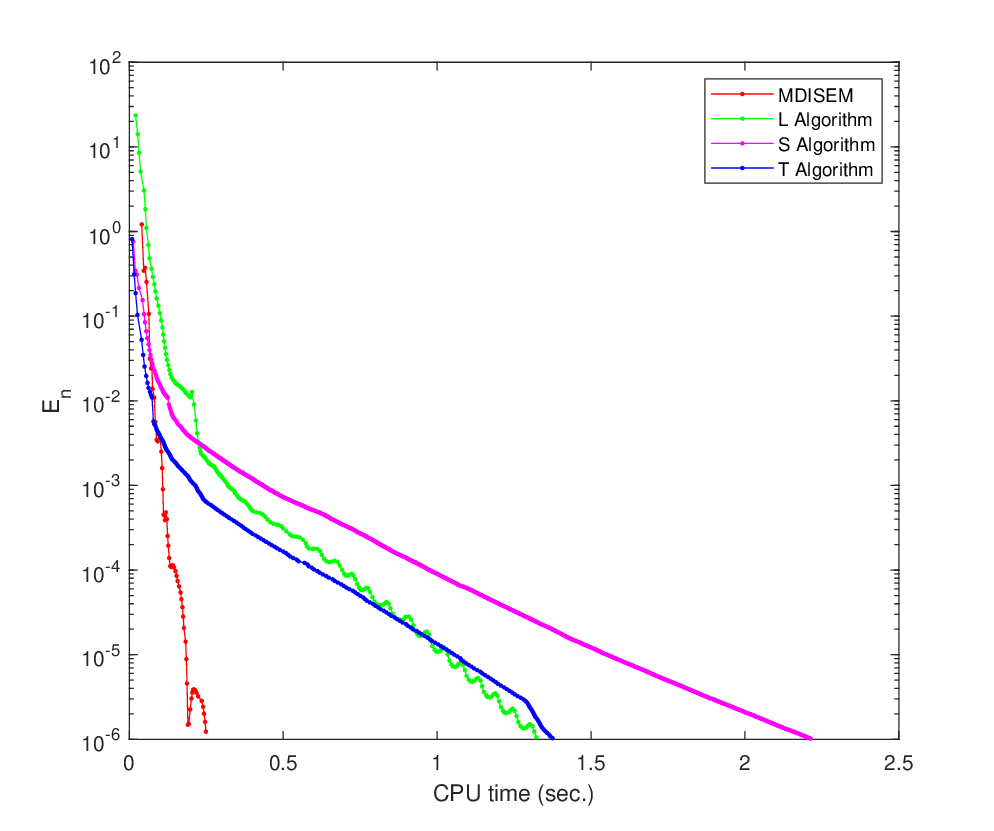}  
     \vspace{0.6in}
    \caption{Comparison of algorithms for network equilibrium flow.}
    \label{trafgr}
\end{figure}
\begin{table}[h!]
\centering
\begin{tabular}{ p{4cm} p{4cm} c }
\hline
Algorithm & CPU time (sec.)  & Iterations \\
\hline
MDISEM & 0.25 & 58\\
L Algorithm &1.32&213\\
S Algorithm   &2.22 & 605 \\
T Algorithm &1.38 & 205\\

\hline
\end{tabular}
\caption{Numerical results for network equilibrium flow.}
\label{traftab}
\end{table}

 \begin{sidewaystable}[]
 	
 \footnotesize
 \begin{tabular*}{\textwidth}{ccccccccccccc}
 \toprule
  & \multicolumn{4}{c}{$\sigma=1.8$} & \multicolumn{4}{c}{$\sigma=4.9$} & \multicolumn{4}{c}{$\sigma=5.6$} \\
 \cmidrule(lr){2-5}\cmidrule(lr){6-9}\cmidrule(lr){10-13}
 $\mu=0.2323$ & $\beta=1.4$ & $\beta=2.6$ & $\beta=3.1$ & $\beta=4.6$ & $\beta=2.5$ & $\beta=3.1$ & $\beta=3.9$ & $\beta=4.1$ & $\beta=2.9$ & $\beta=3.3$ & $\beta=3.7$ & $\beta=4.01$ \\
 \midrule
 Iterations & 56 & 88 & 126 & 199 & 74 & 65 & 87 & 189 & 49 & 59 & 64 & 81 \\
 \bottomrule
 &&&&&&&&&&&&\\
 \toprule
  & \multicolumn{4}{c}{$\sigma=0.49$} & \multicolumn{4}{c}{$\sigma=1.21$} & \multicolumn{4}{c}{$\sigma=2.44$} \\
 \cmidrule(lr){2-5}\cmidrule(lr){6-9}\cmidrule(lr){10-13}
 $\mu=0.3332$ & $\beta=0.30$ & $\beta=1.1$ & $\beta=2.6$ & $\beta=2.8$ & $\beta=0.8$ & $\beta=1.2$ & $\beta=2.2$ & $\beta=2.7$ & $\beta=1.23$ & $\beta=1.4$ & $\beta=2.6$ & $\beta=3$ \\
 \midrule
 Iterations & 90 & 160 & 571 & 729 & 56 & 70 & 141 & 232 & 60 & 44 & 76 & 187 \\
 \bottomrule
 &&&&&&&&&&&&\\
\toprule
  & \multicolumn{4}{c}{$\sigma=0.5$} & \multicolumn{4}{c}{$\sigma=1.8$} & \multicolumn{4}{c}{$\sigma=2.9$} \\
 \cmidrule(lr){2-5}\cmidrule(lr){6-9}\cmidrule(lr){10-13}
 $\mu=0.464$ & $\beta=0.3$ & $\beta=1.4$ & $\beta=1.9$ & $\beta=2.1$ & $\beta=1$ & $\beta=1.23$ & $\beta=1.96$ & $\beta=2.04$ & $\beta=1.56$ & $\beta=1.72$ & $\beta=1.89$ & $\beta=2.06$ \\
 \midrule
 Iterations & 76 & 217 & 413 & 624 & 59 & 47 & 82 & 119 & 50 & 55 & 47 & 71 \\
 \bottomrule
 
 \end{tabular*}
 \label{table:sigma_beta1}

 \caption{Sensitivity analysis of Algorithm 4.1 (MDISEM) in network equilibrium flow for different values of $\sigma$ and $\beta$.}
 \label{sen1}
 \end{sidewaystable}

\begin{figure}[] 
	\begin{subfigure}{0.49\textwidth}
		\includegraphics[scale=0.5,trim=28mm 60mm 20mm 80mm]{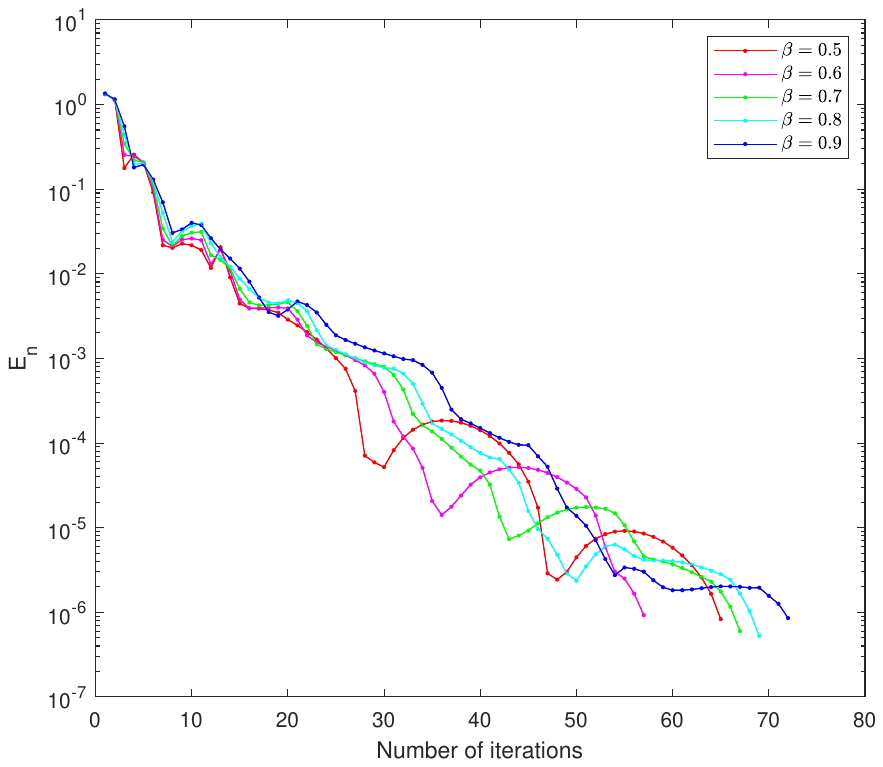}
  
		\caption{$\sigma=0.8$}
		\label{1}
	\end{subfigure}
	\begin{subfigure}{0.49\textwidth}
		\includegraphics[scale=0.5,trim=30mm 60mm 20mm 80mm]{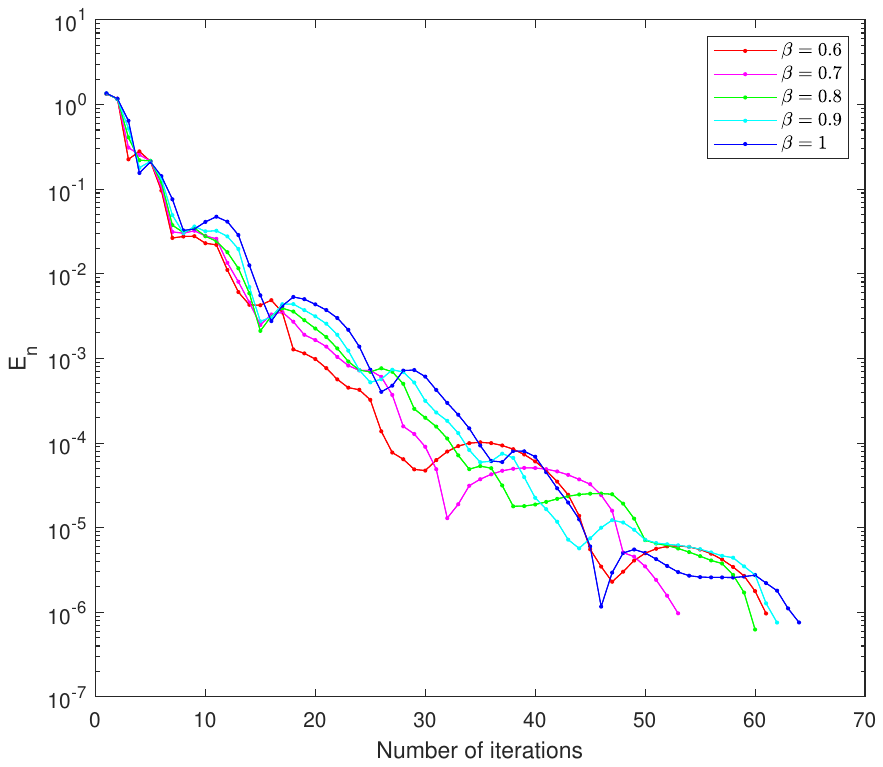}
		\caption{$\sigma=1$}
		\label{2}
	\end{subfigure}
	\begin{subfigure}{0.49\textwidth}
	\includegraphics[scale=0.5,trim=28mm 60mm 20mm 60mm]{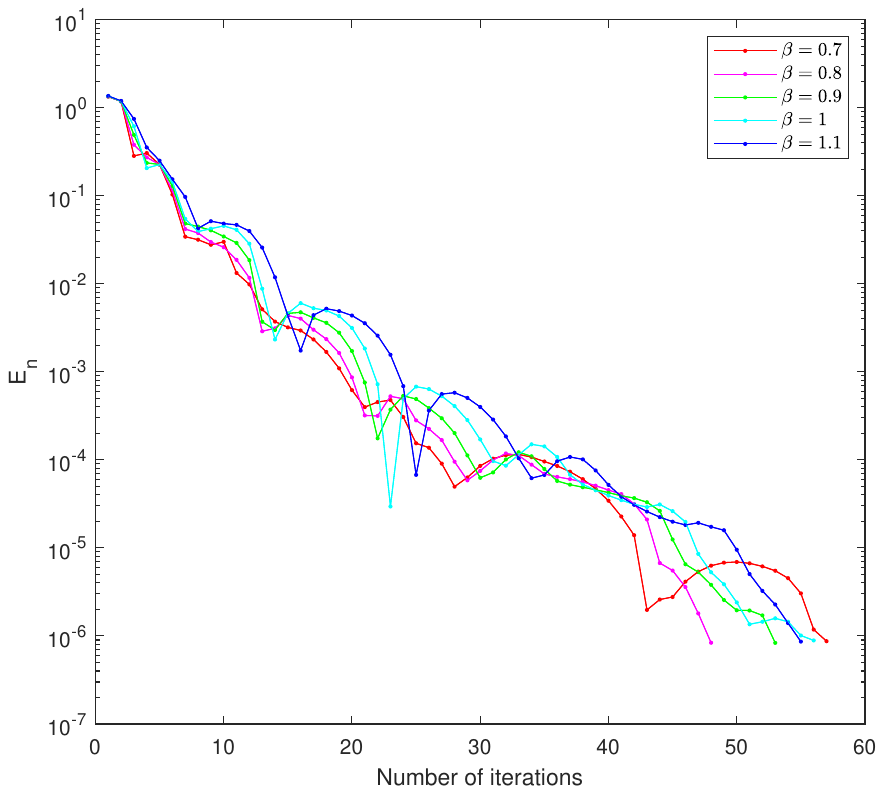}
	\caption{$\sigma=1.2$}
	\label{1}
\end{subfigure}
\begin{subfigure}{0.49\textwidth}
	\includegraphics[scale=0.5,trim=25mm 60mm 10mm 80mm]{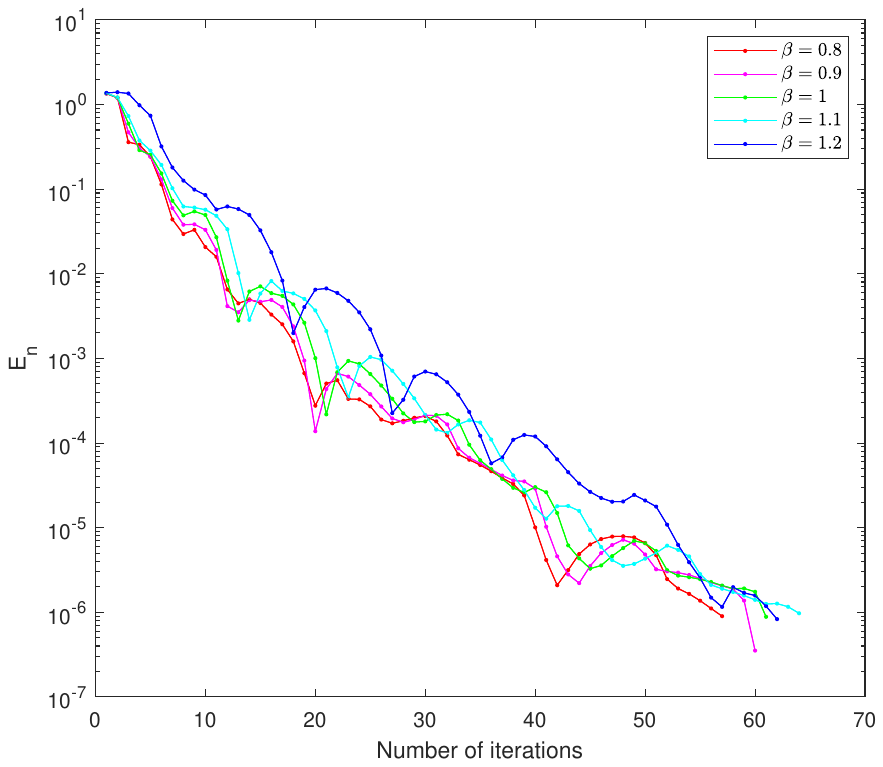}
	\caption{$\sigma=1.4$}
	\label{2}
\end{subfigure}
	\caption{Sensitivity analysis of Algorithm 4.1 (MDISEM) in network equilibrium flow with $\mu= 0.6$ and varying values of $\sigma$ and $\beta$.}
	\label{permemb}
\end{figure}

\subsection{Nash-Cournot oligopolistic market equilibrium model} \label{Nashhh1}
Here, we discuss the Nash-Cournot oligopolistic market equilibrium model, initially formulated as a convex optimization problem by Murphy et al. \cite{13}. A monotone variational inequality was developed from the model by Harker \cite{14}. For further details on this problem, we refer to \cite{15}. Consider $M$ firms supplying homogeneous products non-cooperatively. Let $g_{i}(x_{i})$ represent the cost of the $i$-th firm's supply, with $x_{i} \geq 0$. The total supply in the market is considered to be $R\geq0$, i.e. $R=\sum \limits_{i=1}^{M}x_{i}$ and $q(R)$ be the inverse demand curve. The Nash equilibrium solution for the market, denoted as $(p^{*}_{1},p^{*}_{2},...,p^{*}_{M})$, satisfies $p^{*}_{i} \geq 0$, for each $i=1,2,...,M$, and represents an optimal solution to the problem:
    \begin{align} \label{Nash1}
        \max_{x_{i}\geq 0}x_{i}q(x_{i}+R^{*}_{i})-g_{i}(x_{i})
    \end{align}
    where 
    \begin{align*}
        R^*_{i}=\sum \limits_{j=1,j\neq i}^{M}p^{*}_{j}.
    \end{align*}
    This problem \eqref{Nash1} is further equivalent to the VIP of finding $p^{*}=(p^{*}_{1},p^{*}_{2},...,p^{*}_{M}) \in \mathbb{R}^{M}_{+}$ such that
\begin{align}
    \langle \mathcal{F}p^{*},x-p^{*}\rangle \geq 0,
\end{align}
for each $x \in \mathbb{R}^{M}_{+}$. Here $\mathcal{F}p^{*}=(\mathcal{F}_{1}(p^{*}),\mathcal{F}_{2}(p^{*}),...,\mathcal{F}_{M}(p^{*}))$ and 
\begin{align*}
    \mathcal{F}_{i}(p^{*})=g'_{i}(p^{*}_{i})-q\bigg(\sum \limits_{j=1}^{M}p_{j}^{*}\bigg)-p^{*}_{j}q'\bigg(\sum \limits_{j=1}^{M}p^{*}_{j}\bigg).   
\end{align*}
 The cost function $g_{i}$ and the inverse demand curve structured as
$$g_{i}(x_{i})=e_{i}x_{i}+\frac{r_{i}}{r_{i}+1}O_{i}^{\frac{-1}{r_{i}}}x_{i}^{\frac{r_{i}+1}{r_{i}}}~\text{and}~q(R)=5000^{1/1.1}R^{-1/1.1},$$
where the parameters $e_{i},O_{i},r_{i}$ are shown in Table \ref{table:5a} below. Since $\mathcal{F}$ is monotone and Lipschitz continuous \cite{15}, it follows from Remark \ref{remarks}(iv) that the cost operator $\mathcal{F}$ satisfies Assumption $(A3)$.
Here, we compute for $M=5$ firms and the solution to the problem is $p^{*}=(36.912,41.842,43.705,42.665,39.182)$.  
We use the following parameters for the computation:
\begin{itemize}
    \item \textbf{MDISEM:} $\mu=0.6$, $\lambda_{1}=0.6$, $\beta=0.8$, $\sigma=1.5$, $\alpha_{n}=0.5$, $\delta_{n}=1+\frac{1}{n}$, $\chi_{n}=1+\frac{1}{(n+1)^{1.1}}$, $\zeta_{n}=\frac{1}{(n+1)^{1.1}}$, $\xi=0.4990$ and $\nu_{n}=1$.
    \item \textbf{L Algorithm:} $\mu=0.6$, $\lambda_{0}=0.6$ and $p_{n}=\frac{1}{(n+1)^{1.1}}$.
    \item \textbf{S Algorithm:} $\mu=0.6$, $\lambda_{1}=0.6$, $\alpha_{n}=0.2$, $\nu_{n}=1$ and $\gamma=1.9$.
    \item \textbf{T Algorithm:} $\mu=0.6$, $\nu_{1}=0.6$, $\theta=0.2$, and $\alpha_{n}=\frac{1}{n^{\frac{3}{2}}}$.
    
    \item \textbf{T1 Algorithm:} $\mu=0.6$, $\tau_{0}=0.6$, $\gamma=0.2$, $\theta=0.8$, $\beta=-0.6$, $\lambda=0.5$ and $\alpha_{n}=\frac{1}{(n+1)^{1.1}}$.
\end{itemize}
Based on Figure \ref{Nashgr} and Table \ref{Nashtab}, it is evident that the sequence produced by our algorithm converges to the solution much faster compared to previously established algorithms. Moreover, Figure \ref{permemb1} and Table \ref{sen2} provide a detailed sensitivity analysis for varying $\mu,\beta$ and $\sigma$. \\
From Table \ref{sen2}, the sensitivity analysis demonstrates that the iterative scheme reliably converges across varying values of 
$\mu$, $\beta$ and $\sigma$ ensuring stability in computing equilibrium quantities. Since the Nash-Cournot model involves firms adjusting their production strategies based on competitors’ actions, a fast and stable convergence of our algorithm implies that firms can reach an equilibrium state efficiently, minimizing computational overhead. 

\begin{table}[h!]
\centering
\begin{tabular}{ p{3cm} p{3cm} p{3cm} c }
\hline
Firm $i$ & $e_{i}$ & $O_{i}$ & $r_{i}$ \\
\hline
1 & 10 &5&1.2 \\
2 & 8   & 5&1.1 \\
3 &6 & 5&1.0 \\
4   &4 & 5&0.9 \\
5 & 2 & 5&0.8 \\
\hline
\end{tabular}
\caption{Parameters for the computation.}
\label{table:5a}
\end{table}
\vspace{2cm}

  \begin{table}[h!]
\centering
\begin{tabular}{ p{4cm} p{4cm} c }
\hline
Algorithm & CPU time (sec.)  & Iterations \\
\hline
MDISEM & 0.30 &80\\
L Algorithm & 0.49& 132\\
S Algorithm   &1.76 & 350 \\
T Algorithm &0.74 & 149\\
T1 Algorithm & 1.44 & 230\\
\hline
\end{tabular}
\caption{Numerical results for Nash-Cournot problem.}
\label{Nashtab}
\end{table}

\begin{figure} 
\centering
\includegraphics[scale=0.4,trim=36mm 35mm 30mm 25mm]{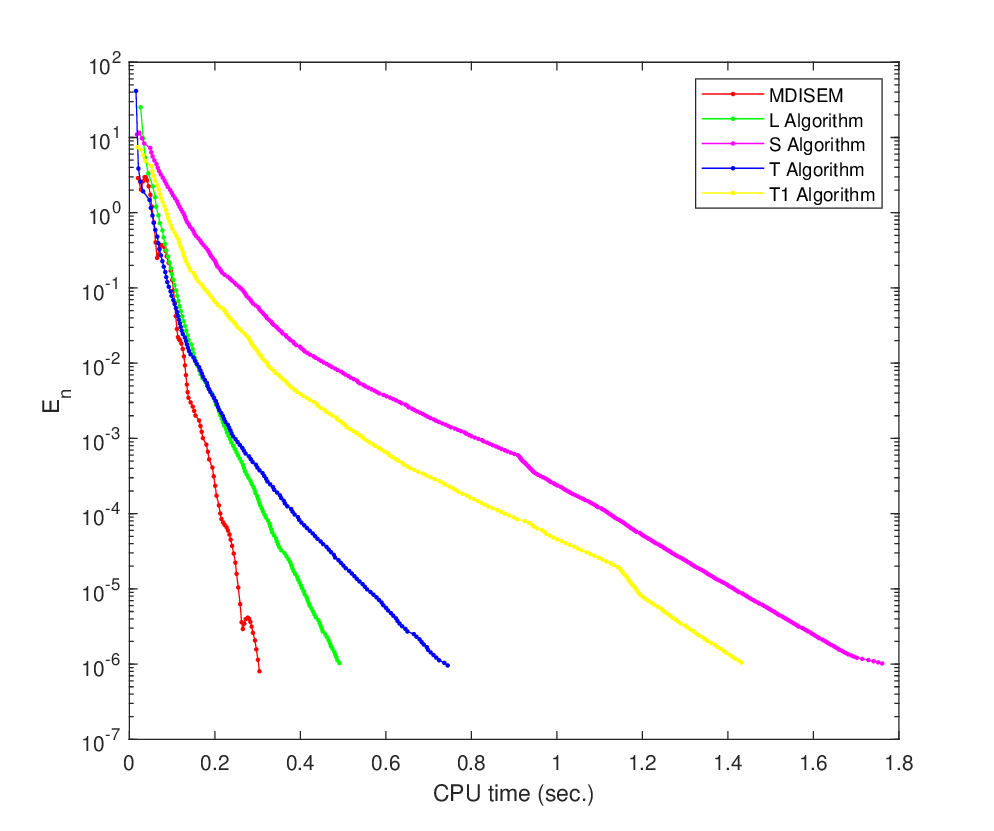}  
     \vspace{0.8cm}
    \caption{Comparison of algorithms for Nash-Cournot problem}
    \label{Nashgr}
\end{figure}

\begin{sidewaystable}[]
 	
 \footnotesize
 \begin{tabular*}{\textwidth}{ccccccccccccc}
 \toprule
  & \multicolumn{4}{c}{$\sigma=1.8$} & \multicolumn{4}{c}{$\sigma=4.9$} & \multicolumn{4}{c}{$\sigma=5.6$} \\
 \cmidrule(lr){2-5}\cmidrule(lr){6-9}\cmidrule(lr){10-13}
 $\mu=0.2323$ & $\beta=1.4$ & $\beta=2.6$ & $\beta=3.1$ & $\beta=4.2$ & $\beta=2.5$ & $\beta=3.1$ & $\beta=3.9$ & $\beta=4.1$ & $\beta=2.9$ & $\beta=3.3$ & $\beta=3.7$ & $\beta=4.01$ \\
 \midrule
 Iterations & 83 & 76 & 76 & 67 & 55 & 49 & 31 & 65 & 48 & 44 & 30 & 31 \\
 \bottomrule
 &&&&&&&&&&&&\\
 \toprule
  & \multicolumn{4}{c}{$\sigma=0.49$} & \multicolumn{4}{c}{$\sigma=1.21$} & \multicolumn{4}{c}{$\sigma=2.44$} \\
 \cmidrule(lr){2-5}\cmidrule(lr){6-9}\cmidrule(lr){10-13}
 $\mu=0.3332$ & $\beta=0.30$ & $\beta=1.1$ & $\beta=2.6$ & $\beta=2.8$ & $\beta=0.8$ & $\beta=1.2$ & $\beta=2.2$ & $\beta=2.7$ & $\beta=1.23$ & $\beta=1.4$ & $\beta=2.6$ & $\beta=3$ \\
 \midrule
 Iterations & 241& 182 & 96 & 97 & 92 & 82 & 78 & 77 & 81 & 83 & 78 & 74 \\
 \bottomrule
 &&&&&&&&&&&&\\
\toprule
  & \multicolumn{4}{c}{$\sigma=0.5$} & \multicolumn{4}{c}{$\sigma=1.8$} & \multicolumn{4}{c}{$\sigma=2.9$} \\
 \cmidrule(lr){2-5}\cmidrule(lr){6-9}\cmidrule(lr){10-13}
 $\mu=0.464$ & $\beta=0.3$ & $\beta=1.4$ & $\beta=1.9$ & $\beta=2.1$ & $\beta=1$ & $\beta=1.23$ & $\beta=1.96$ & $\beta=2.04$ & $\beta=1.56$ & $\beta=1.72$ & $\beta=1.89$ & $\beta=2.06$ \\
 \midrule
 Iterations & 203& 88 & 90 & 90 & 146 & 116 & 90 & 91 & 95 & 92 & 90 & 91 \\
 \bottomrule
 
 \end{tabular*}
 \label{table:sigma_beta13}

 \caption{Sensitivity analysis of Algorithm 4.1 (MDISEM) in Nash-Cournot problem for different values of $\sigma$ and $\beta$.}
 \label{sen2}
 \end{sidewaystable}

\begin{figure}[]
	\begin{subfigure}{0.49\textwidth}
		\includegraphics[scale=0.5,trim=26mm 60mm 20mm 80mm]{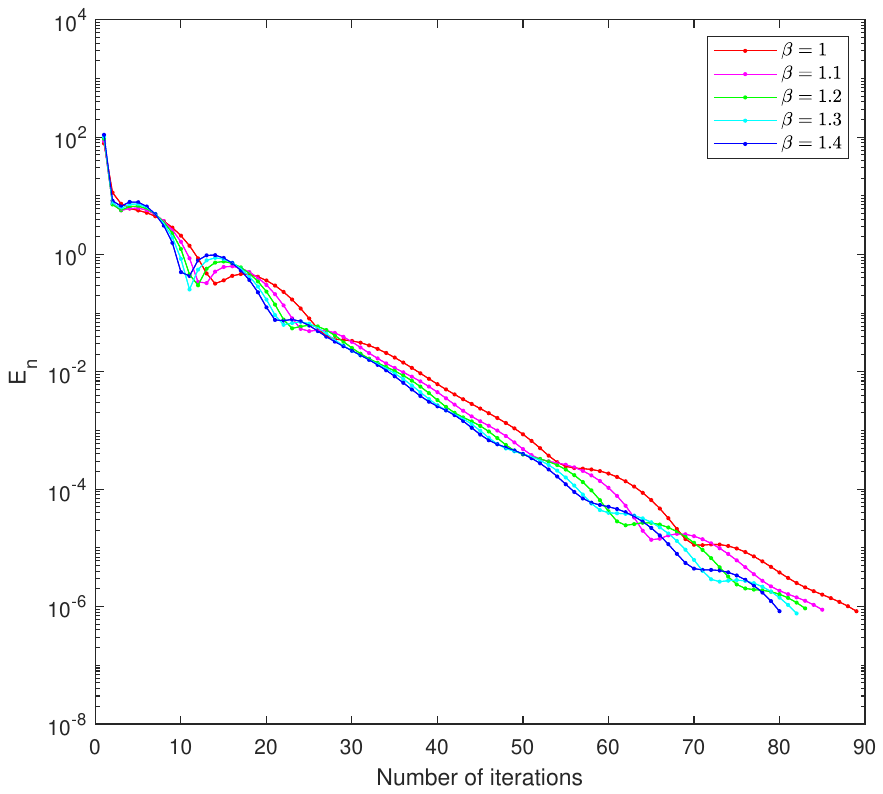}
		\caption{$\sigma=0.8$}
		\label{1}
	\end{subfigure}
	\begin{subfigure}{0.49\textwidth}
		\includegraphics[scale=0.5,trim=26mm 60mm 10mm 80mm]{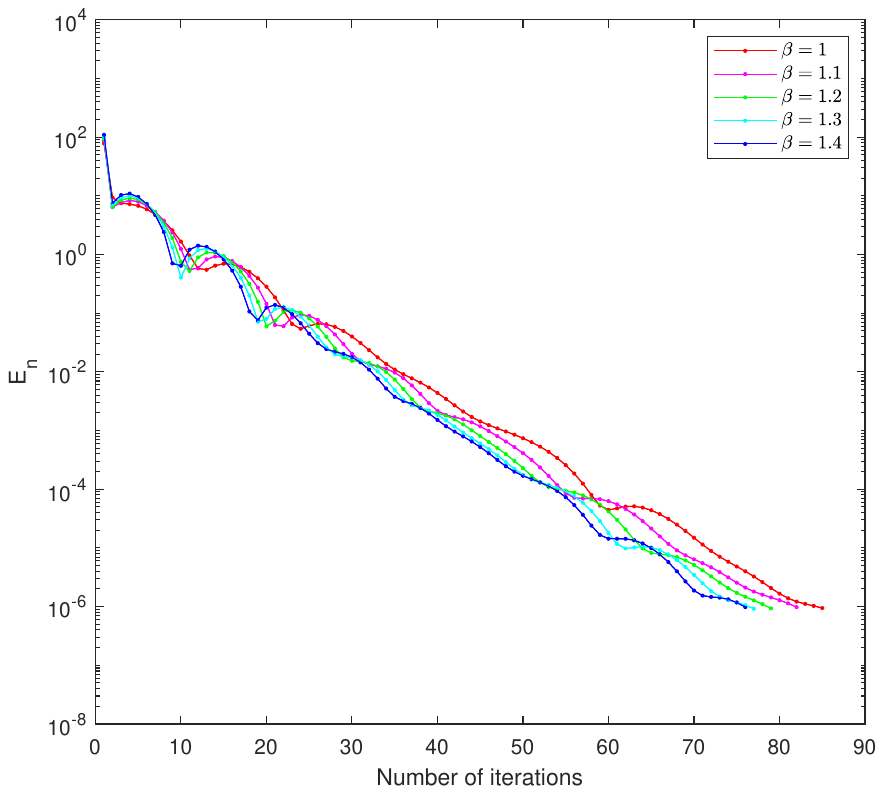}
		\caption{$\sigma=1$}
		\label{2}
	\end{subfigure}
	\begin{subfigure}{0.49\textwidth}
	\includegraphics[scale=0.5,trim=26mm 60mm 20mm 60mm]{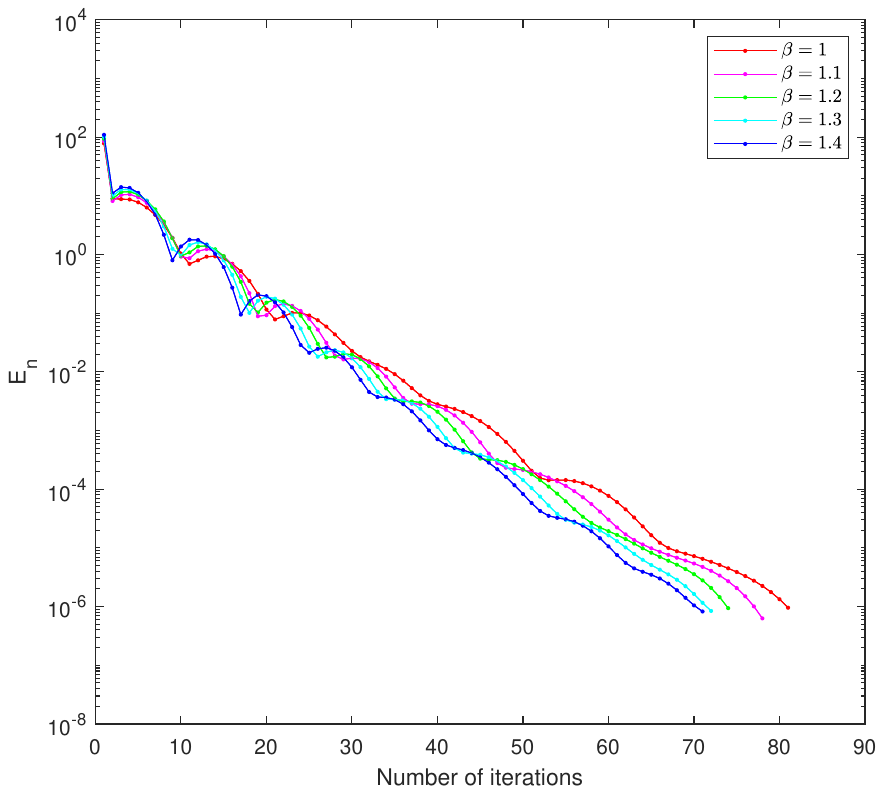}
	\caption{$\sigma=1.2$}
	\label{1}
\end{subfigure}
\begin{subfigure}{0.49\textwidth}
	\includegraphics[scale=0.5,trim=26mm 60mm 20mm 80mm]{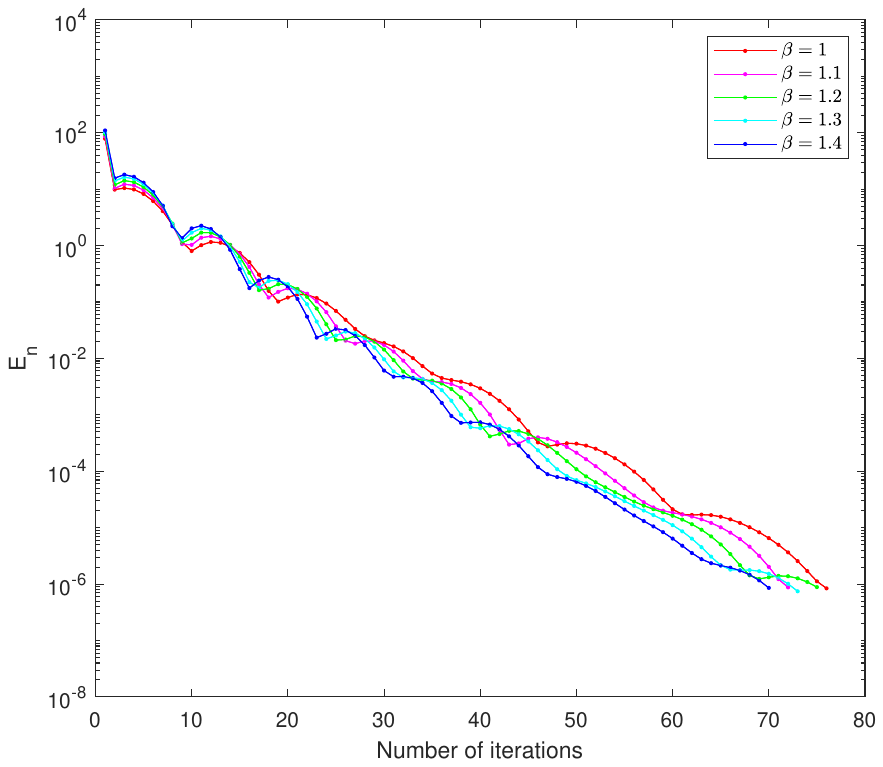}
	\caption{$\sigma=1.4$}
	\label{2}
\end{subfigure}
	\caption{Sensitivity analysis of Algorithm 4.1 (MDISEM) in Nash-Cournot problem with $\mu= 0.6$ and varying values of $\sigma$ and $\beta$.}
	\label{permemb1}
\end{figure}

\subsection{Image restoration problem}
In this section, we discuss the image restoration problems. 
The general model for image recovery can be formulated as:
\begin{align}
    b=Ax+v,
\end{align}
where $x \in \mathbb{R}^{n \times 1}$ is the original image, $A \in \mathbb{R}^{m \times n}$ is the blurring matrix, $v \in \mathbb{R}^{m \times 1}$ is the additive noise and $b \in \mathbb{R}^{m \times 1}$ is the observed image. In this case, we aim to approximate the original image by minimizing the additive noise known as a least square problem, which is as follows:
\begin{align} \label{imgrs1}
    \min_{x}\frac{1}{2}\|Ax-b\|^2.
\end{align}
This \eqref{imgrs1} is further equivalent to solving a variational inequality problem of finding $p^{*} \in \mathbb{R}^{n}$ such that
\begin{align}
    \nabla f(p^{*})^{T}(x-p^{*}) \geq 0, ~\text{for all}~x \in \mathbb{R}^{n},
\end{align}
where $\nabla f(x)=A^{T}(Ax-b)$. Now, $\mathcal{F}x-\mathcal{F}y=A^{T}A(x-y)$, thus $   \langle A^{T}(Ax-y),x-y \rangle=\langle A(x-y),A(x-y)\rangle \geq 0.$
It implies that $\nabla f$ is monotone. Also
\begin{align*}
    \|\mathcal{F}x-\mathcal{F}y\|&=\|A^{T}A(x-y)\|\\
    &\leq \|A^{T}A\|\|x-y\|.
\end{align*}
Hence $\nabla f$ is $\|A^{T}A\|$-Lipchitz continuous. Thus, $\nabla f$ satisfies the assumption $(A3)$. So, we apply our algorithm to restore the quality of images corrupted by different blurs. The test images are considered to be built-in Matlab pictures of the Cameraman (Figure \ref{figu7}) and Peppers (Figure \ref{figure1}). The following blur types are used to  corrupt the images:
\begin{enumerate}
    \item Gaussian blur of size 5 $\times$ 5 with standard deviation 1.5 on Cameraman (Figure \ref{figu8}).
    \item Motion blur with motion blur length 5 and motion blur angle 60 on peppers (Figure \ref{figure2}).
\end{enumerate}
We compute the relative error, $R_{n}=\frac{\|x_{n+1}-x_{n}\|}{x_{n}}$, and terminate the process once it falls below the predefined threshold $\epsilon$. We use the following parameters for the computation:
\begin{itemize}
    \item \textbf{MDISEM:} $\mu=0.6$, $\lambda_{1}=0.6$, $\beta=0.76$, $\sigma=1.5$, $\alpha_{n}=0.5$, $\delta_{n}=1+\frac{1}{n}$, $\chi_{n}=1+\frac{1}{(n+1)^{1.1}}$, $\zeta_{n}=\frac{1}{(n+1)^{1.1}}$, $\xi=0.4990$ and $\nu_{n}=0.4$.
    \item \textbf{L Algorithm:} $\mu=0.6$, $\lambda_{0}=0.6$ and $p_{n}=\frac{1}{(n+1)^{1.1}}$.
    \item \textbf{S Algorithm:} $\mu=0.6$, $\lambda_{1}=0.6$, $\alpha_{n}=0.2$, $\nu_{n}=1$ and $\gamma=1.9$.
    \item \textbf{T Algorithm:} $\mu=0.6$, $\nu_{1}=0.6$, $\theta=0.2$, and $\alpha_{n}=\frac{1}{(n+1)^{\frac{3}{2}}}$.
    \item \textbf{T1 Algorithm:} $\mu=0.6$, $\tau_{0}=0.2$, $\gamma=1.1$, $\theta=0.3$, $\beta=-0.2$, $\lambda=0.5$ and $\alpha_{n}=\frac{1}{(n+1)^{1.1}}$.
\end{itemize}
From Figure \ref{Imgfig} and Table \ref{table:img}, it is evident that our algorithm outperforms previously established results. The recovered images are shown in Figures \ref{imageres1} and \ref{imageres2}.
\begin{table}[h!] 
\centering
\begin{tabular}{p{4cm} c c c c}
\hline
\multirow{2}{*}{Algorithm} & \multicolumn{2}{c}{Gaussian blur} & \multicolumn{2}{c}{Motion blur} \\
\cline{2-5}
& CPU time (sec.) & Iterations & CPU time (sec.) & Iterations\\
\hline
MDISEM & 2.12 & 16 & 10.44 & 9 \\
L Algorithm & 2.82 & 23 &12.17& 13 \\
S Algorithm & 2.75 & 24& 10.71 & 11  \\
T Algorithm & 2.82 & 26 & 10.78 & 11  \\
T1 Algorithm &2.83&20& 15.60&13\\
\hline
\end{tabular}
\caption{Numerical results for different blur types.}
\label{table:img}
\end{table}

\begin{figure}[]
	\begin{subfigure}{0.49\textwidth}
		\includegraphics[scale=0.5,trim=10mm 60mm 20mm 20mm]{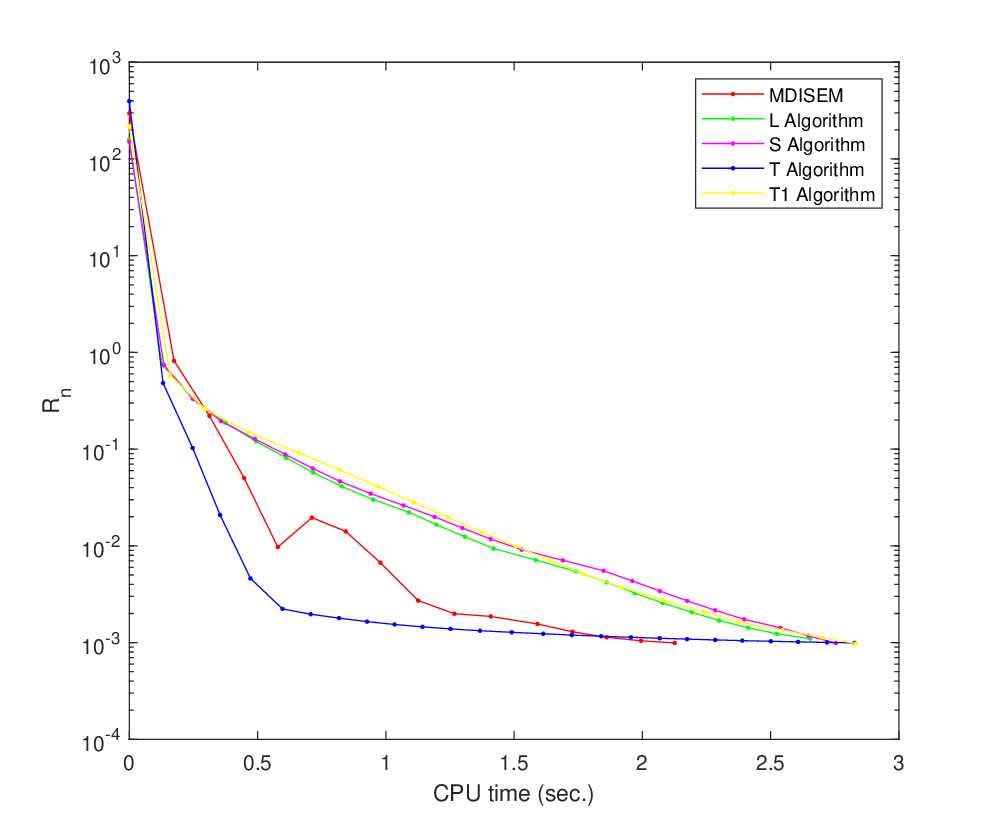}
        \vspace{1.2in}
		\caption{Gaussian blur with $\epsilon=10^{-3}$.}
		\label{1gaussian}
	\end{subfigure}
	\begin{subfigure}{0.49\textwidth}
		\includegraphics[scale=0.5,trim=10mm 60mm 10mm 20mm]{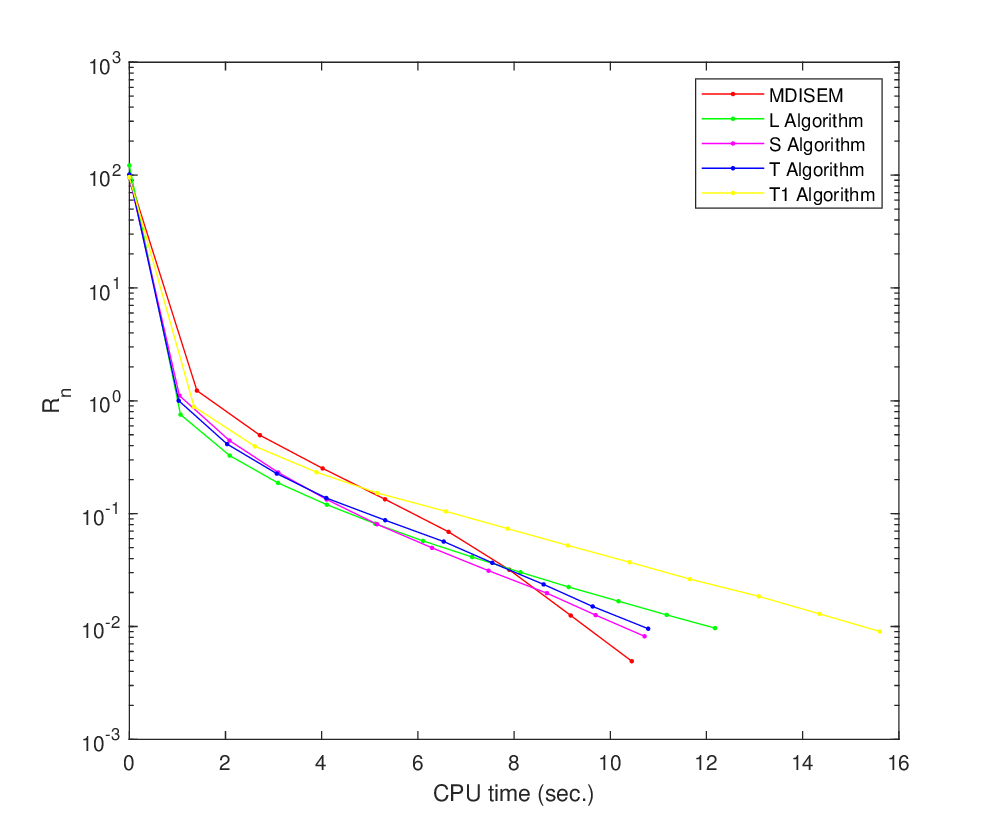}
        \vspace{1.2in}
		\caption{Motion blur with $\epsilon=10^{-2}$. }
		\label{2motion}
	\end{subfigure}
    \caption{Comparison of algorithms in image restoration problem}
    \label{Imgfig}
    \end{figure}
\begin{figure} \label{im11}
    \centering
    \begin{subfigure}{0.32\textwidth}
        \includegraphics[width=\linewidth, trim=2mm 1mm 1mm 1mm]{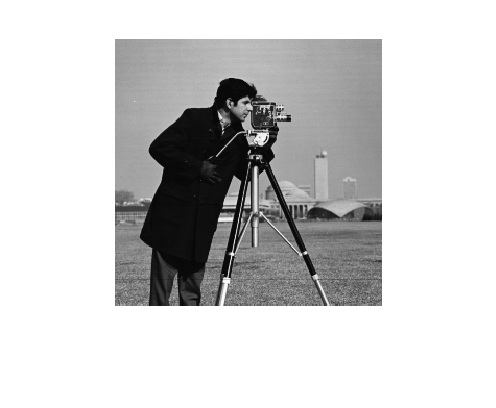}
        \vspace{-0.6in}
        \caption{Original}
        \label{figu7}
    \end{subfigure}
    \begin{subfigure}{0.32\textwidth}
        \includegraphics[width=\linewidth, trim=2mm 1mm 1mm 1mm]{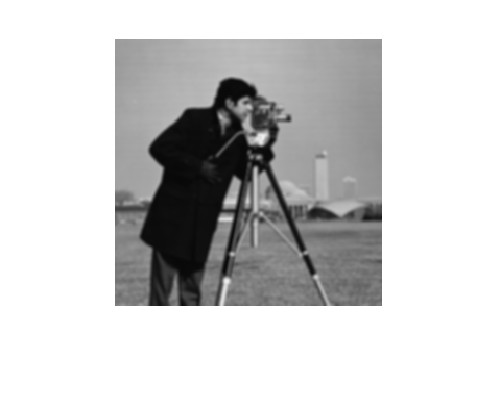}
        \vspace{-0.6in}
        \caption{Gaussian Blur}
        \label{figu8}
    \end{subfigure}
    \begin{subfigure}{0.32\textwidth}
        \includegraphics[width=\linewidth, trim=2mm 1mm 1mm 1mm]{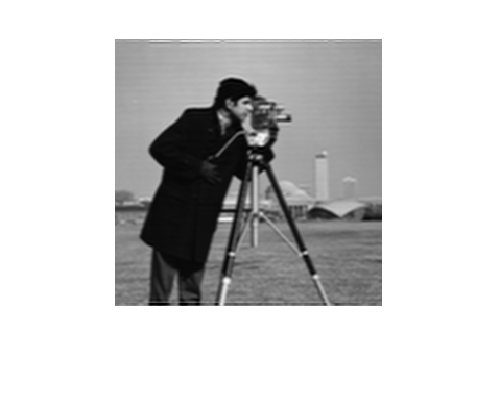}
        \vspace{-0.6in}
        \caption{MDISEM}
        \label{figu8main}
    \end{subfigure}
    
    \begin{subfigure}{0.32\textwidth}
        \includegraphics[width=\linewidth, trim=2mm 1mm 1mm 1mm]{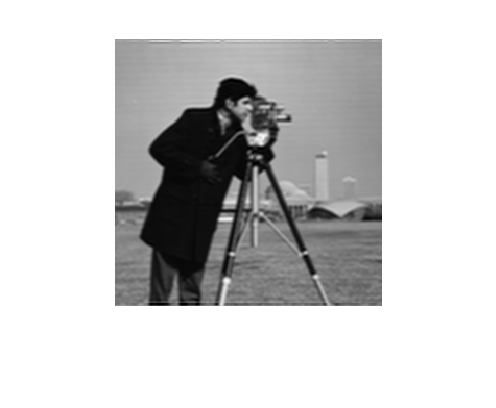}
        \vspace{-0.6in}
        \caption{L Algorithm}
        \label{figure4y}
    \end{subfigure}
    \begin{subfigure}{0.32\textwidth}
        \includegraphics[width=\linewidth, trim=2mm 1mm 1mm 1mm]{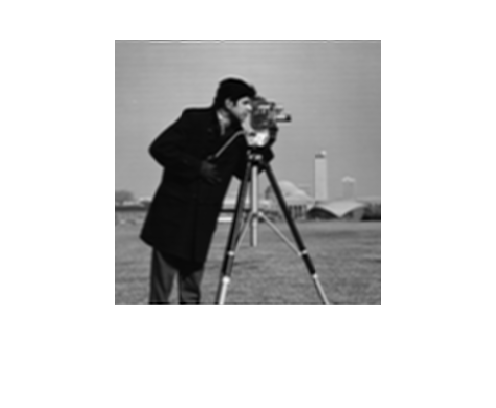}
        \vspace{-0.6in}
        \caption{S Algorithm}
        \label{figu10}
    \end{subfigure}
    \begin{subfigure}{0.32\textwidth}
        \includegraphics[width=\linewidth, trim=2mm 1mm 1mm 1mm]{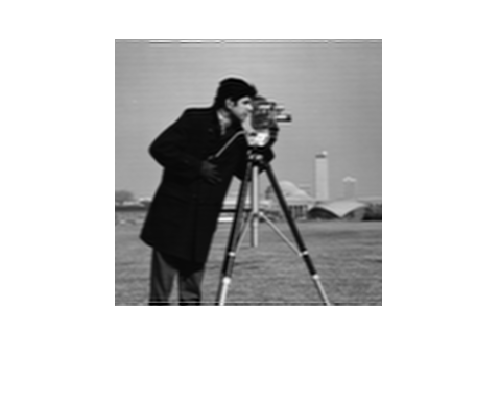}
        \vspace{-0.6in}
        \caption{T Algorithm}
        \label{figu9}
    \end{subfigure}
    
    \begin{subfigure}{0.32\textwidth}
        \centering
        \includegraphics[width=\linewidth, trim=2mm 1mm 1mm 1mm]{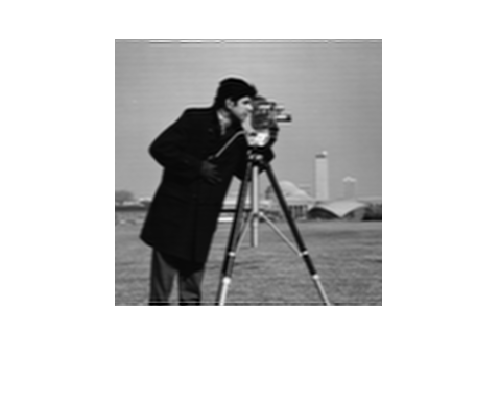}
        \vspace{-0.6in}
        \caption{T1 Algorithm}
        \label{figu10a}
    \end{subfigure}

    \caption{Image restoration from Gaussian blur.}
    \label{imageres1}
\end{figure}

\begin{figure}
    \centering
    \begin{subfigure}{0.32\textwidth}
        \includegraphics[width=\linewidth, trim=2mm 1mm 1mm 1mm]{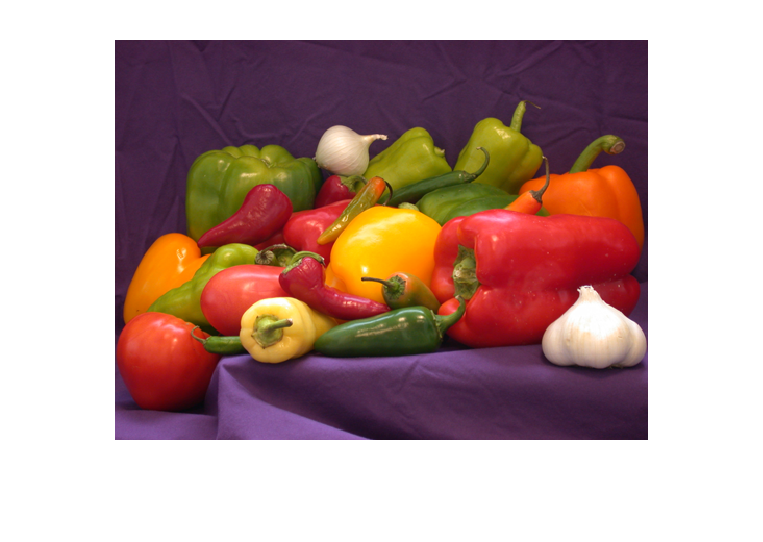}
        \vspace{-0.5in}
        \caption{Original}
        \label{figure1}
    \end{subfigure}%
    \begin{subfigure}{0.32\textwidth}
        \includegraphics[width=\linewidth, trim=2mm 1mm 1mm 1mm]{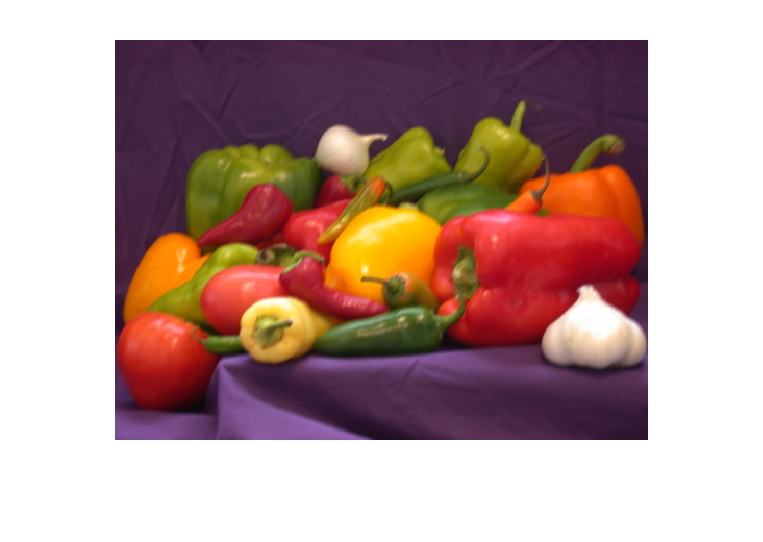}
        \vspace{-0.5in}
        \caption{Motion blur}
        \label{figure2}
    \end{subfigure}%
    \begin{subfigure}{0.32\textwidth}
        \includegraphics[width=\linewidth, trim=2mm 1mm 1mm 1mm]{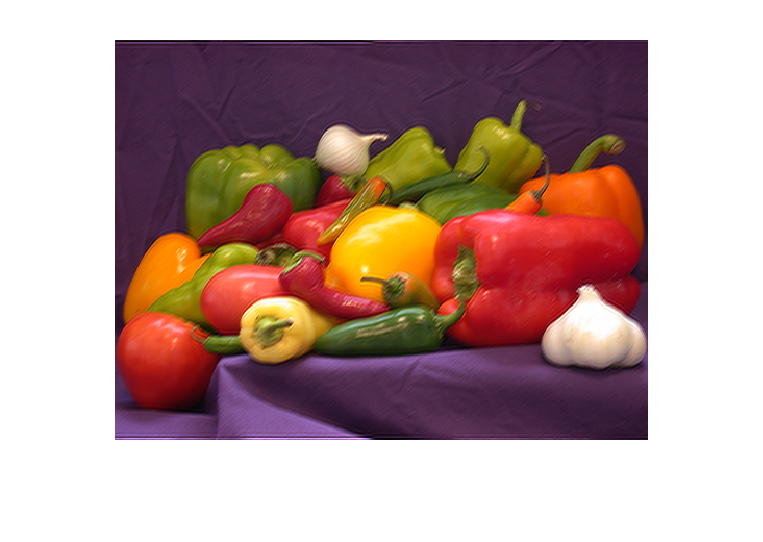}
        \vspace{-0.5in}
        \caption{MDISEM}
        \label{figure3}
    \end{subfigure}

    \begin{subfigure}{0.32\textwidth}
        \includegraphics[width=\linewidth, trim=2mm 1mm 1mm 1mm]{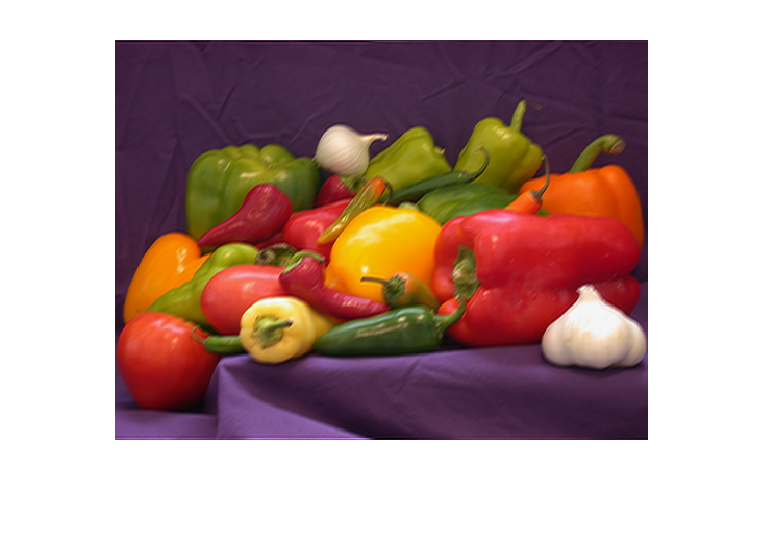}
        \vspace{-0.5in}
        \caption{L Algorithm}
        \label{figu10}
    \end{subfigure}%
    \begin{subfigure}{0.32\textwidth}
        \includegraphics[width=\linewidth, trim=2mm 1mm 1mm 1mm]{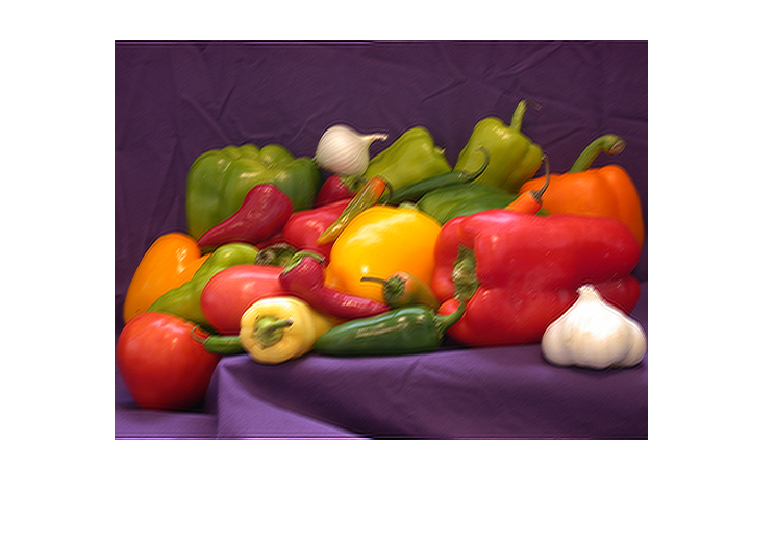}
        \vspace{-0.5in}
        \caption{S Algorithm}
        \label{figure5}
    \end{subfigure}%
    \begin{subfigure}{0.32\textwidth}
        \includegraphics[width=\linewidth, trim=2mm 1mm 1mm 1mm]{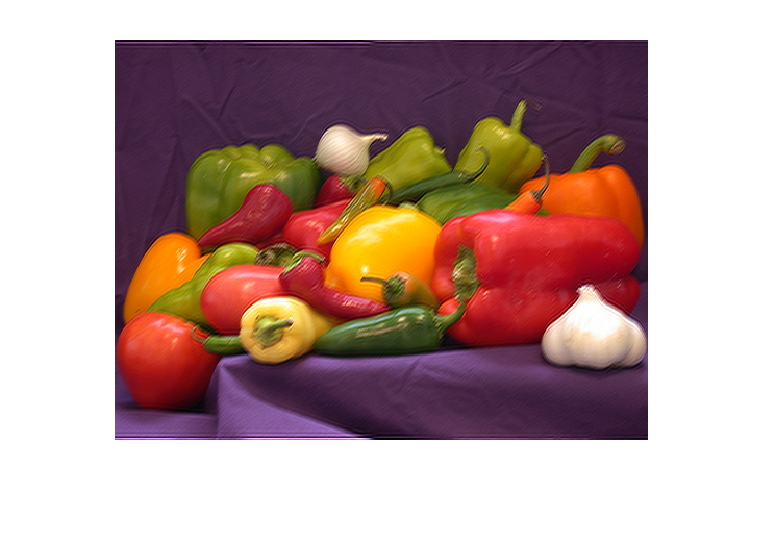}
        \vspace{-0.5in}
        \caption{T Algorithm}
        \label{figure4}
    \end{subfigure}

    \begin{subfigure}{0.32\textwidth}
        \centering
        \includegraphics[width=\linewidth, trim=2mm 1mm 1mm 1mm]{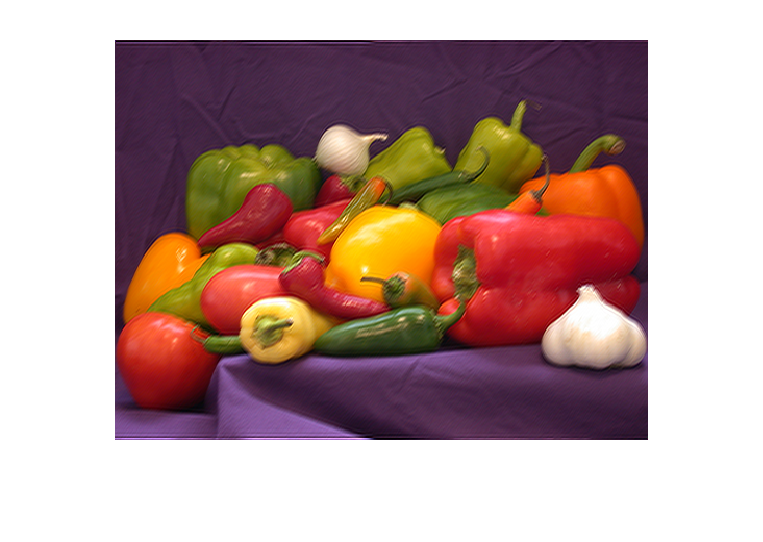}
        \vspace{-0.5in}
        \caption{T1 Algorithm}
        \label{figure5a}
    \end{subfigure}

    \caption{Image restoration from motion blur.}
    \label{imageres2}
\end{figure}
 \section{Conclusion}
        In this paper, we introduced a new efficient iterative algorithm to solve the variational inequalities in the setting of real Hilbert space. The proposed algorithm is motivated by the use of the double inertial method, in which one of the inertial is allowed to be 1. Moreover, our iterative algorithm chooses a generalized step-size that is non-monotonic. Finally, we give some real-life applications for network equilibrium flow, oligopolistic market equilibrium problems and image restoration problems. We also concluded that our iterative scheme works much better in terms of computational time and converges to a solution in less number of iterations. For future work, we aim to extend this work in the setting of reflexive Banach spaces.

\textbf{Acknowledgements.} The authors are thankful to the learned referees for the valuable suggestions and appreciation of the work.
The first author is also grateful to CSIR, New Delhi, India, for providing a senior research fellowship (File 09/0677(13166)/2022-EMR-I).

\section*{Declarations}
\textbf{Ethical Approval.} Not Applicable as no both human and/ or animal studies. 

\textbf{Data Availability.} No underlying data was collected or produced in this study.

\textbf{Competing interests.}
The authors declare that there is no competing interest in the publication of this paper.

\textbf{Funding.} There is no funding.

\textbf{Authors' contributions.} All authors contribute equally.


\end{document}